\theoremstyle{plain}
\newtheorem{theorem}{Theorem}[section]
\newtheorem{lemma}[theorem]{Lemma}
\newtheorem{proposition}[theorem]{Proposition}
\theoremstyle{definition}
\newtheorem{definition}[theorem]{Definition}
\newtheorem{remark}[theorem]{Remark}
\title{Nonlinear stability in a free boundary model of active locomotion}
\author[1]{Leonid Berlyand}
\author[2]{C. Alex Safsten}
\author[3]{Lev Truskinovsky}
\affil[1]{Department of Mathematics and Huck Institute for Life Sciences, The Pennsylvania State University, USA}
\affil[2]{Department of Mathematics, University of Maryland, USA,}
\affil[3]{ESPCI Paris, France}
\begin{document}
	
	\maketitle
	
	\abstract{ Contraction-driven self-propulsion of a large class of living cells can be  modeled by a  Keller-Segel system with free boundaries. The ensuing  ``active'' system, exhibiting both dissipation and anti-dissipation, features  stationary and traveling wave solutions. While the former represent  static cells, the latter describe propagating pulses (solitary waves) mimicking the autonomous  locomotion of the same cells.  In this paper we provide the first    proof of the asymptotic nonlinear stability of both of such   solutions, static and dynamic.  In the case of stationary solutions, the linear stability   is established  using the spectral theorem for compact, self-adjoint operators, and thus linear stability is determined classically, solely by eigenvalues. For traveling waves the picture is   more complex because the linearized  problem is non-self-adjoint, opening the possibility of a  ``dark'' area in the phase space which is not ``visible'' in the purely eigenvalue/eigenvector approach. To establish   linear stability in this case we   employ spectral methods together with the Gearhart-Pruss-Greiner (GPG) theorem, which controls the entire spectrum via bounds on the resolvent operator. For both stationary and traveling wave solutions, nonlinear stability is then proved by showing how the nonlinear part of the problem may be dominated by the linear part and then employing a Gr\"onwall inequality argument. The developed novel methodology  can prove useful also in other problems involving  non-self-adjoint (non-Hermitian or non-reciprocal) operators which are ubiquitous in the modeling of ``active'' matter.
%
%
%
}
	
	\section{Introduction}
	
 The ability of cells to self-propel is fundamental for many aspects of development, homeostasis, and disease, for instance,  cells need to move to form tissues and  their migration  is  also critical during tissue repair   \cite{de2024follow,sengupta2021principles,shellard2020all,weissenbruch2024actomyosin}.   
  The active machinery   behind self-propulsion  resides in the cytoskeleton---a meshwork of actin filaments with contractile cross-linkers represented  by  myosin motors.
 The main active processes in the cytoskeleton are  the   polymerization of actin fibers and  the   relative sliding of actin fibers induced by myosin motors \cite{alberts2002molecular}. The molecular and biochemical basis of these processes  is basically known, however the corresponding  mathematical  theory  is still under development and a variety of multiscale simulation approaches targeting various cell motility mechanisms can be found in the literature   \cite{barnhart2015balance,bray2000cell,calvez2012analysis,carlsson2008mathematical,giomi2014spontaneous,kimpton2013multiple,
  mogilner2009mathematics,
  rafelski2004crawling,wang2012computational,ziebert2013effects}.
 
Aiming at the development of a rigorous mathematical approach to  stability analysis of such models, we  focus  in this paper  on the simplest phenomenon of  self-propulsion in a particular class of cells, keratocytes.  They move by advancing the front through   polymerization    with a simultaneous formation of adhesion clusters.   After the adhesion of the protruding part of the cell is secured, the cytoskeleton contracts due to activity of myosin motors. This  contraction leads to detachment at the rear and depolymerization of the actin network. All three components of the motility mechanism (polymerization, contraction, and adhesion) depend upon continuous  ATP hydrolysis and require intricate regulation by complex signaling pathways involving chemical and mechanical feedback loops  \cite{barnhart2011adhesion,tjhung2015minimal}.

 Contractile force generation is of fundamental importance  for this mode of cell migration.   Using actin fibers as a substrate, myosin motors \cite{howard2002mechanics} generate forces  which  are ultimately responsible for  both the motility initiation and the steady locomotion of   keratocytes \cite{agarwal2019diverse,cowan2022non,biophysica2040046,maree2006polarization, vicente2009non}. In view of   such  central role  of   active contraction and to achieve relative analytical transparency of the mathematical analysis, we consider in this paper  a prototypical model  which emphasizes contraction as the main driving mechanism  while accounting for  polymerization and adhesion only in a  schematic manner.


Our minimal model  of  cell  motility is based on a one-dimensional projection of the complex intracellular dynamics onto the direction of motion. Specifically, we  assume that the motor part of a   cell can be viewed as a one-dimensional continuum with  two free boundaries representing the front and the rear of the moving cell.    We make a simplifying physical assumption that actin polymerization and de-polymerization can take place only on these boundaries and that  these phenomena can be modeled as an influx of mass at the front boundary and its disappearance at the rear boundary.  The  adhesion is also treated in an over-simplified form   as passive  spatially inhomogeneous viscous friction. Instead,  the actomyosin   contraction, which is the main player,  is represented by active spatially inhomogeneous prestress \cite{kruse2006contractility,juelicher2007active}.

As it was first shown in \cite{RecPutTru2013,RecPutTru2015}, the mathematical model,   which captures all these physical   effects  while being amenable to rigorous  mathematical analysis,  reduces to  the one dimensional Keller-Segel system with free boundaries.
 In contrast to the conventional chemotaxic  Keller-Segel model \cite{keller1971model},  here the same set of equations emerges in a purely mechanical setting, see \cite{bois2011pattern,callan2013active,dimilla1991mathematical, kruse2006contractility,larripa2006transport,juelicher2007active, rubinstein2009actin} for the earlier insights along the same lines.  In our Section \ref{sec:model}, where we present for convenience a short derivation of this model, we also highlight its universality (minimality) by showing that   it can be obtained starting from rather different physical assumptions.

    It is important to mention that  alternative free-boundary-type models of cell motility, emphasizing various  other components  of the self-propulsion machinery,   have been used in  numerical simulations     \cite{KerEtAl2008,NicEtAl2017,NwoCam2023,RubJacMog2005,SchSteDuk2010} and, in some cases, also   subjected to rigorous mathematical analysis  
	 \cite{CucMelMeu2020,CucMelMeu2022}.  Closely related  free boundary models describing  tumor growth    have been also   studied both analytically and numerically \cite{NoePer2021,FriHu2006,FriHu2007,HaoEtAl2012}.   Our paper  differs from all this  mathematical work on free boundary modeling of locomotion in its emphasis on  the non-self-adjoint property of the linearized operator resulting from both  nonlocality \cite{Ion1979} and activity \cite{ashida2020non}.  Note that   phase field models of cell motility, representing a mathematical  proxy to our free boundary formulation (front capturing instead of front tracking \cite{de2004front,bernacki2024vertex}), have   been also a subject of extensive  research efforts   \cite{BerFuhRyb2018,BerPotRyb2016,BerPotRyb2017,YanJolLev2019,ZieAra2016}. However, while the corresponding   models allow for very efficient numerical simulations, they are usually not as readily amenable for rigorous stability analysis, and therefore will not be addressed in the present purely analytical study. 
	
 The  one dimensional Keller-Segel system with free boundaries is known to  possesses a family of pulse-like traveling wave solutions, which describe  steady autonomous locomotion of individual cells \cite{RecPutTru2013,RecPutTru2015,RybBer2023,SafRybBer2022}.  These solutions, which can be interpreted as solitary waves,  bifurcate from a family of stationary (static) solutions, representing non-moving cells. The role of  bifurcation parameter  is played by  a non-dimensional    measure of  the level of internal activity with both static and dynamic solutions being 'active' in the sense that they consume and dissipate energy. In  \cite{RecPutTru2013,RecPutTru2015} the  whole variety of stationary  solutions was constructed  analytically and the nature of the corresponding static-dynamic bifurcation  was determined  using   weakly nonlinear analysis   involving a standard approach based on Lyapunov-Schmidt reduction \cite{guo2013bifurcation};  significant numerical evidence that traveling waves bifurcating from homogeneous stationary states have  finite reserve of   stability  was  also obtained.  In \cite{RybBer2023,SafRybBer2022}, the same bifurcation between stationary and traveling wave solutions was studied  in two dimensions, and the  configurations  of the traveling wave solutions  were computed    both analytically (close to the bifurcation point)   and numerically (away from it).  Linear stability was addressed for both stationary and traveling wave solutions with  the  eigenvalue-based stability condition computed explicitly. 

	   The present paper  begins with the rigorous analysis of the static-dynamic bifurcation   via the Crandall-Rabinowitz (CR) theorem \cite{crandall1980mathematical}; note that  the strictly analogous approach in more than 1D would fail due to a lack of conformal invariance.   Then we show analytically that all eigenvalues of traveling wave solutions have negative real part which  complements  the earlier numerical result  that the stability-defining eigenvalue  has negative real part \cite{RybBer2023,SafRybBer2022}. Our main result, though,  is the proof of the  asymptotic stability of both stationary states and traveling waves for appropriate parameter values. 
	   
	   The  main difficulty  in the stability  analysis  of the traveling wave solutions resides  in the  non-self-adjoint (non-Hermitian, or non-reciprocal) nature of the corresponding  linearized operator \cite{Dol1961}, which is an important general  feature of PDE models of ``active" matter \cite{ashida2020non,dinelli2023non,duan2023dynamical, shaat2023chiral, wu2023active}. It is known, for instance, that for non-self-adjoint (NSA) operators, eigenvectors do not necessarily span the entire domain of the operator. Therefore, common stability analysis, e.g. \cite{AleBlaCas2019,OhtTarSan2016}, based only on eigenvalues and eigenvectors  may not be sufficient \cite{Tre2020}. 
	   
	   We recall that  when the linearized problem  is self-adjoint,  the eigenmodes of the stable system can be divided into stable (corresponding to eigenvalues with negative real part), and  center  (with zero real part eigenvalues). In the nonlinear setting, solutions in the corresponding stable manifold would then be  controlled (bounded) by solutions in the center manifold. Furthermore, a nonlinear ODE can be derived for solutions in the center manifold, from which it can be  shown that all such solutions asymptotically approach the equilibrium.  It would then mean that all other solutions also approach it.  The  key   assumption in this approach  to stability   is that eigenvectors of the linearized operator span the entire domain of the operator. This  may not be the case for NSA operators which typically exhibit a  ``dark'' area in the phase space which is not ``visible'' in the purely eigenvalue/eigenvector approach   We address this challenge using directly resolvent analysis  instead of  relying solely  on eigenvalues.\footnote{It should be noted  that  in some specific NSA problems  eigenvectors do span the whole domain  and in those cases the absence of negative eigenvalues may be sufficient for stability, e.g.\cite{KumHirMan2024}}  
	   
%
	    
	

In  the NSA case, where  we have to deal with the entire spectrum of the linearized operator,  linear stability can be established by applying the Gearhart-Pruss-Greiner (GPG) theorem \cite{EngNagBre2000} which  operates  directly with bounds on the resolvent of the linear operator. Specifically, when eigenvectors do not span the domain of the operator $A$, the GPG theorem  turns to the analysis of  another    operator 
\begin{equation}
	R_\mu=(\mu I-A)^{-1}
\end{equation}
with the parameter $\mu$ having a positive real part.  The crucial step is then to bound $R_\mu$ away from any point of the entire spectrum, not just the eigenvalues. In particular, even in infinite dimensions, such a bound rules out the cases when a sequence of eigenvalues has negative real parts converging to zero. 

%


	After the  linear stability is established, a natural step in checking the  nonlinear stability  would be,  at least in finite dimensions,  to use the  Hartman-Grobman (HG) theorem \cite{ArrPla1992}.   However,  even  in this case,  this theorem requires the absence of eigenvalues with zero real part.  Our problem has  a zero eigenvalue (a slow manifold) which  appears in the linearized operator due to translational  symmetry.   To overcome this complication,  we  use of the notion of ``stability up to shifts'' , see for instance  \cite{SafRybBer2022},   and  prove the appropriate analog of the HG theorem specifically tailored for our  infinite dimensional problem.  While there are several extensions of the HG  theorem to infinite dimensions, e.g. \cite{AulWan2000}, most of these results apply to a smooth nonlinear operators mapping a Banach space to itself whereas in our parabolic PDE problem, the operator maps a Sobolev space $H^2$ to $L^2$.  The existing HG type results for parabolic equations \cite{Lu1991}  are also not directly applicable to our problem.	Our original  approach is based on establishing subtle bounds on the derivatives of the solution  in the neighborhood of a pitchfork bifurcation which allow one to decide when the linear part of the nonlinear operator  dominates its nonlinear part. Our result is then equivalent to establishing the existence of a Lyapunov function  (or rather Lyapunov functional in our infinite dimensional setting)  for the pulse like traveling wave solutions with synchronously moving free boundaries, see   \cite{BenAbdKir2020,Mei2007,SchYan1997} for  related results.   We emphasize that our approach is readily generalizable to other  PDE models where the task is  to prove asymptotic stability  of an emerging nontrivial solution in the vicinity of a  bifurcation point.

While our approach is original, it is important to mention that a large variety of other methods for establishing nonlinear stability of traveling waves have been explored in the literature, see the reviews in  \cite{KapPro2013,pego1992eigenvalues,San2002}. In particular,  several studies  deal  specifically   with spectral stability of traveling wave solutions by showing  that the spectrum of the linearized operator   consists only of points with negative real part  \cite{LatSuk2010,ledoux2009computing,pelinovsky2005inertia}. Most of these studies use the  method of Evans function,  which  is a convenient tool for  separating the eigenvalues from the continuous spectrum ubiquitous in  traveling wave problems  defined  in unbounded domains \cite{BarHumLynLyt2018,CorJon2020,Eva1972}. 
 We do not use the Evans function based approach for two reasons. First, our traveling waves  are compact and  there  is no continuous spectrum for our problem. Second,  in our specific problem,  we can circumvent the use  of  Evans function by resorting to a   simpler  approach to calculate the leading eigenvalue developed  in \cite{CraRab1973}. Other studies of linear and nonlinear stability of traveling waves, which  use spectral theory to  obtain bounds on the semigroup generated by the linear operator and  then showing that the  nonlinear problem is dominated by the linearization,  can be found in  \cite{CorJon2020,Kap1994,KowMarMun2022}. While we basically  follow the same strategy, our  main spectral theoretic tool, which is the GPG theorem,  is different from all those used in the previous studies.
 
 No data are associated with this article.

\section{The model}\label{sec:model}

 In this Section  we briefly explain how  our  one-dimensional Keller-Segel system with free boundaries can be derived from physical considerations. To  emphasize  that this model is both minimal and universal, we present two alternative  derivations based on apparently contradicting assumptions that the material inside the cell is either  infinitely  compressibile or infinitely incompressible. 
   
In the  original, infinitely compressibile version of the model, proposed in \cite{PhysRevE.97.012410,RecPutTru2013,RecPutTru2015}, we start  by  writing the 1D force balance  for a gel segment in the form 
$$ \sigma' =\xi v,$$
where $\sigma(x,t)$ is axial stress,  $v(x,t)$ is the velocity of the gel,  $\xi$ the coefficient of viscous friction. We denote a single spacial coordinate by  $x$  and time by $t$ is time;  prime denotes the spatial  derivative.  The assumption of infinite compressibility of the gel decouples the force balance equation from the mass balance equation. Specifically, by neglecting compressibility,  we can write  the constitutive relation for an active gel, representing the material inside the cell,  in the form 
$$\sigma=\eta  v'+k m,$$
where $\eta$ is the bulk viscosity, $m(x,t)$ is the mass density of myosin motors and $k >0$ is a constant representing contractile pre-stress  per unit motor mass. 
The density of motors is  modeled by a standard  advection-diffusion equation where the advection is perceived to be  originating from  the flow of actin \cite{bois2011pattern,hawkins2011spontaneous,wolgemuth2011redundant}: 
$$\partial_t m+ (m v)'-D m''=0,$$ where $D$ is the constant effective diffusion coefficient, see \cite{RecPutTru2015} for the discussion of its  physical meaning.
Behind  this  equation is the assumption that  myosin motors, actively cross-linking the implied actin meshwork, are not only being advected by the network flow but can also diffuse due to the presence of thermal fluctuations, see also  
To ensure that the moving cell maintains its size,  we now follow \cite{barnhart2010bipedal,du2012self,loosley2012stick, recho2013asymmetry,PhysRevE.97.012410} and introduce a phenomenological cortex/osmolarity mediated quasi-elastic non-local interaction linking the front and the back of the self-propelling fragment. Specifically,  we assume  that  the boundaries of our moving active segment interact through an effective  linear spring which regulates 
the value  of the stress on the free boundaries  $l_-(t)$ and $l_+(t)$: 
$$\sigma_0^{\pm} =-k_e(L(t)-L_0)/L_0,$$ 
where $L(t)=l_{+}(t)-l_{-}(t)$ is the length of the  moving segment, $k_e$ is the effective elastic stiffness and $L_0$ is the reference length of the spring. 
Finally, we assume  that 
the free boundaries are not penetrable which means that they move with  the internal flow. Therefore the following kinemaitc boundary condition must hold: 
$\partial_t{l}_{\pm}=v(l_{\pm})$. 
Our assumptions allow us to  avoid  addressing explicitly the mass balance equation since we effectively  postulate that the addition  of  actin particles  at the front is fully compensated by  their sinchronous  removal  at the rear, see \cite{RecPutTru2013,RecPutTru2015} for details.  
We also impose a zero  flux condition for  the active component 
$m'(l_{\pm}(t),t)=0$ 
ensuring that the average concentration of motors
$m_0= L_0^{-1}\int_{l_-(t)}^{l_+(t)}m(x,t)dx$
is preserved. To complete the  setting of the ensuing  mechanical problem we  impose the initial conditions $l_{\pm}(0)=l_{\pm}^0$ and $m(x,0)=m^0(x).$  One can see that the resulting one-dimensional problem  is equivalent to a dynamical system of a Keller–Segel-type with free boundaries,  however, in contrast to the  chemotaxic analog, the nonlocality here is mechanical rather than chemical in origin.



Next, we  derive the same model under the  incompressibility assumption as it was first proposed in \cite{RybBer2023,SafRybBer2022}. This alternative derivation emphasizes the paradigmatic nature of our problem.  We  also switch now to a more formal mathematical description as appropriate for the subsequent rigorous analysis. 

The cell, whose cytoskeleton is now viewed as an incompressible fluid, is again modeled in time-dependent interval centered at a point $c(t)$ and with width $L(t)$: $\Omega(t)=(c(t)-L(t)/2,c(t)+L(t)/2)\subset\mathbb R$. For each $t\geq 0$, the velocity of fluid in the cell is $u(x,t)\in\mathbb R$ for $x\in \Omega(t)$. Since the cell is thin in the dorsal direction, most of the fluid within the cell lies close to the cell membrane. Therefore, we may assume the  flow of incompressible  fluid is dominated by friction and follows Darcy's law: $$  p' = \zeta u,$$ where $p(\cdot,t):\Omega(t)\to\mathbb R$ is the total pressure within the cell,   $\zeta>0$ is the constant adhesion coefficient and the prime denotes   spatial derivative; see  Appendix D in \cite{PhysRevE.105.064401}  for the detailed physical derivation. Following  \cite{SafRybBer2022}, we write the  equation for the total pressure  in the form $$p=\mu   u'+km,$$ where $m(\cdot,t):\Omega(t)\to\mathbb R_+$ is the density of myosin within the cell, $\mu>0$ is the viscosity coefficient, and $k>0$ is the contractility of myosin. The   incompressibility  assumption suggests that the hydrostatic fluid pressure here should be viewed as a kinematic variable which in our 1D setting is constant and can be simply absorbed  into $p$.

We further assume that again myosin density evolves in time according to the advection-diffusion equation, which, after applying Darcy's law  can be written in the form  ( see \cite{SafRybBer2022} for details):
\begin{equation} \label{myo}
	\partial_t m = Dm''-(m\phi')'.
\end{equation}
To ensure that the total myosin mass is conserved in time, the myosin density satisfies no-flux boundary conditions: $m'=0$.

A boundary condition for total pressure is obtained from an  assumption that there is a non-constitutive global elastic restoring force due to osmotic effects or the cell membrane cortex tension. This non-local effect is  modeled  by the following   condition: 
\begin{equation}
	p =-k_e(L(t)-L_0)/L_0,
\end{equation}
where $L_0$ is the length of the   cell in a  reference configuration, and $k_e>0$ is the  elastic    stiffness  coefficient. Note that in view of the possibility for the fluid to escape in  the dorsal direction,  our 1D cell is effectively compressible despite the incompressibility of the fluid,  so that  the coefficient $k_e$    can be also interpreted as  the inverse compressibility of the cell as a whole. 

The ensuing 1D problem is analytically  transparent  due to the fact that the variable $\phi=p/\zeta$ 	 satisfies  a linear  elliptic equation:
\begin{equation} \label{ell}
	- \mu \phi'' + \zeta \phi = km.
\end{equation}
The complexity of the problem resides in the boundary conditions. Thus, as we have seen,  the variable  $\phi$ at the boundary satisfies the nonlocal condition of Dirichlet type.
Furthermore, to find  the time evolution of the interval $\Omega(t)$, we assume that both components of our binary mixture, solvent (actin) and solute (miosin),  respects the  kinematic (velocity-matching) boundary conditions of Hele-Shaw type: 
\begin{align}
	\partial_tL(t)&=\phi'(c(t)+L(t)/2,t)-\phi'(c(t)-L(t)/2,t)\\
	\partial_tc(t)&=\frac{1}{2}\left(\phi'(c(t)+L(t)/2,t)+\phi'(c(t)-L(t)/2,t)\right).
\end{align}
While the resulting fluid  model, introduced  in \cite{RybBer2023,SafRybBer2022},   is mathematically  equivalent of the solid gel  model introduced in \cite{RecPutTru2013,RecPutTru2015}, it is the  Hele-Shaw fluid formulation which opened the way towards  two dimensional analysis   allowing one to track not only the position  of a moving cell but also  the evolution of its shape;  the corresponding 2D version of the gel model can be found in \cite{PhysRevLett.123.118101}.

	\section{Infinite rigidity  limit}
		Below we distinguish between two versions of our model of cell lomotion. The first   one,  to which we refer as ``Model A'' , assumes, as in the original derivation,  that the size of the cell  is controlled by an elastic spring, whose elastic modulus  will be referred to as the   ``stiffness'' parameter. By considering an asymptotic limit when such stiffness  tends to infinity, we will derive a ``stiff limit'' of the original model to which we refer in what follows as  ``Model B.'' In this limiting model, which turns out to be  analytically much more transparent,  the cell  has a fixed size.

		We recall that  in the model A,  the myosin density $m$, pressure $\phi$, length $L$, and center $c$ satisfy the PDE system:		
		\begin{numcases}{}
			-\mu\phi''+\zeta\phi=km & $x\in\Omega(t)$\label{eq:1D_pressure eqn_1}\\
			m_t=Dm''-(m\phi')' & $x\in\Omega(t)$\label{eq:1D_myosin eqn_1}\\
			\zeta\phi=-k_e\frac{L(t)-L_0}{L_0} & $x\in\partial\Omega(t)$\label{eq:1D_pressure bc_1}\\
			m'=0 & $x\in\partial\Omega(t)$\label{eq:1D_myosin bc_1}\\
			\partial_tL(t)=\phi'\left(c(t)+L(t)/2,t\right)-\phi'\left(c(t)-L(t)/2,t\right)\label{eq:1D_length_eqn} & \\
			\partial_tc(t)=\frac{1}{2}\left(\phi'\left(c(t)+L(t)/2,t\right)+\phi'\left(c(t)-L(t)/2,t\right)\right).\label{eq:1D_center_eqn} &
		\end{numcases}		
		An important feature of this model is that total myosin mass is conserved in time:
		\begin{equation}\label{eq:1D_myosin_conservation}
			\frac{d}{dt}\int_{c(t)-L(t)/2}^{c(t)+L(t)/2}m(x,t)\,dx=0.
		\end{equation}
		
		
		To nondimensionalize this model, we  rescale $x$ by $x\to x/L_0$ and accordingly  $L\to L/L_0$, $c\to c/L_0$. We then  rescale time by $t\to tD/L_0^2$, pressure by $\phi\to\phi\zeta/k_e$ and myosin density by $m\to m L_0^2/M$ where
		$M$ is the total myosin mass.
		
		After such normalization, the variables $x$, $t$, $m$, $\phi$, $L$, and $c$ are all dimensionless quantities and the PDE system \eqref{eq:1D_pressure eqn_1}-\eqref{eq:1D_center_eqn} can be re-written in the form
		\begin{numcases}{}
			-Z\phi''+\phi=Pm & $c-L/2<x<c+L/2$\label{eq:1D_phi}\\
			m_t=m''-K(m\phi')' & $c-L/2<x<c+L/2$\label{eq:1D_m}\\
			\phi=1-L & $x=c\pm L/2$\label{eq:1D_phi_BC}\\
			m_x=0 & $x=c\pm L/2$\label{eq:1D_m_BC}\\
			L_t=K(\phi'(c+L/2)-\phi'(c-L/2)) & \label{eq:1D_L}\\
			c_t=K\frac{\phi'(c+L/2)+\phi'(c-L/2)}{2}. & \label{eq:1D_c}
		\end{numcases}
Here we introduced the main non-dimensionl parameters of the model 
		\begin{equation}\label{eq:1D_parameters}
			Z=\frac{\mu}{\zeta L_0^2}\quad P=\frac{kM}{k_eL_0}\quad K=\frac{k_e}{D\zeta},
		\end{equation}
		representing dimensionless measures of bulk viscosity, activity level and the non-local stiffness, respectively.
		Note that in these rescaled coordinates, the total myosin mass is
		\begin{equation}\label{eq:1D_myosin_conservation_nondimensionalized}
			\int_{c(t)-L(t)/2}^{c(t)+L(t)/2}m(x,t)\,dx=1.
		\end{equation}

		
		A simpler and analytically  more tractable  model  can be obtained if  we consider  the limit when  the ``stiffness'' coefficient $k_e$ tends to infinity, see also \cite{RecPutTru2015}. To this end, suppose $k_e=k_e^\ast/\varepsilon$ where $\varepsilon$ is a small, positive constant. Each of the coefficients $P$, $K$ in \eqref{eq:1D_parameters} depend on $k_e$. Therefore, we denote them $P_\varepsilon=\varepsilon P_1$, and $K_\varepsilon=K_{-1}/\varepsilon$ respectively	where
		\begin{equation}
			P_1=\frac{kM}{k_e^\ast L_0},\quad K_{-1}=\frac{k_e^\ast}{D\zeta}.
		\end{equation}
For each $\varepsilon>0$, we now assume that  $\phi_\varepsilon$, $m_\varepsilon$, $L_\varepsilon$, and $c_\varepsilon$ solve \eqref{eq:1D_phi}-\eqref{eq:1D_c} with coefficients 
		 $P_\varepsilon$, and $K_\varepsilon$ for $0\leq t<T$ with initial conditions $m_\varepsilon(0,x)=\bar m(x)$, $L_\varepsilon(0)=\bar L$, and $c_\varepsilon(0)=\bar c$. We then expand each of $\phi_\varepsilon$, $m_\varepsilon$, $L_\varepsilon$, and $c_\varepsilon$ in small $\varepsilon$:
		\begin{align}
			\phi_\epsilon&=\phi_0+\varepsilon \phi_1+O(\varepsilon^2) \quad &&m_\varepsilon=m_0+\varepsilon m_1+O(\varepsilon^2)\\
			L_\varepsilon&=L_0+\varepsilon L_1+O(\varepsilon^2) \quad && c_\varepsilon=c_0+\varepsilon c_1+O(\varepsilon^2).
		\end{align}
		The next step is to substitute these expansions along with the expansions for $P_\varepsilon$ and $K_\varepsilon$ into our equations and compare terms of like power in $\varepsilon$. Note that the free boundary requires that the boundary conditions be expanded not only in $\varepsilon$, but also $x$ so that all boundary conditions are evaluated at $x=c_0\pm L_0/2$.
 
		In zero order, \eqref{eq:1D_phi} becomes $-Z_0\phi_{0,xx}+\phi_0=0$ with boundary condition $\phi_0=1-L_0$. We explicitly calculate
		\begin{equation}\label{eq:phi_0_in_stiff_limit_derivation}
			\phi_0=(1-L_0)\frac{\cosh\left(\frac{c_0-x}{\sqrt{Z_0}}\right)}{\cosh\left(\frac{L_0}{2\sqrt{Z}}\right)}
		\end{equation}
		In order $-1$, \eqref{eq:1D_m} has only one nontrivial term:
		\begin{equation}
			K_{-1}(m_0\phi_{0,x})_x=0.
		\end{equation}
		We conclude that $m_0\phi_{0,x}$ is constant in $x$. Since the initial condition $\bar m$ is arbitrary in $H^2(-L_0/2,L_0/2)$, and since $\phi_0$ given by \eqref{eq:phi_0_in_stiff_limit_derivation} does not depend on $m_0$, the this can only be accomplished if $\phi_{0,x}=0$. This, in turn, implies $\phi_0=0$ and $L_0=1$.
		
		Since $\phi_0=0$, in zero order, \eqref{eq:1D_m} becomes
		\begin{equation}
			m_{0,t}=m_{0,xx}-K_{-1}(m_0\phi_{1,x})_x.
		\end{equation}
		Where $\phi_1$ satisfies the first order expansions of \eqref{eq:1D_phi} and its boundary condition \eqref{eq:1D_phi_BC}:
		\begin{equation}
			\begin{cases}
				-Z\phi_{1,xx}+\phi_{1}=P_1m_0 & c_0-L_0/2<x<c_0+L_0/2\\
				\phi_1=-L_1 & x=c_0\pm L_0/2
			\end{cases}
		\end{equation}
		In zero order, \eqref{eq:1D_L} becomes
		\begin{equation}
			L_{0,t}=K_{-1}(\phi_{1,x}(c_0+L_0/2)-\phi_{1,x}(c_0-L_0/2)).
		\end{equation}
		On the other hand, we know that $L_0\equiv 1$, so $L_{0,t}=0$. Therefore, $\phi_1$ has to satisfy three boundary conditions:
		\begin{equation}
			\phi_1(c_0-L_0/2)=-L_1,\quad \phi_1(c_0+L_0/2)=-L_1,\quad \phi_{1,x}(c_0-L_0/2)=\phi_{1,x}(c_0+L_0/2).
		\end{equation}
		We conclude that $L_1$ is determined by these three conditions.  
		Since $\phi_1$ satisfies periodic boundary conditions, we may determine $c_0$ by the zero order expansion of \eqref{eq:1D_c}:
		\begin{equation}
			c_{0,t}=K_{-1}\phi_{1,x}(c_0+L_0/2).
		\end{equation}
				
		To complete the derivation of  our ``Model B'' , we omit the subscript indices so we write $m=m_0$,  $L=L_0=1$, $c=c_0$. It is also convenient to denote $\phi=K_{-1}\phi_1$ and introduce a new dimensionless parameter $$P=P_1K_{-1}.$$ We can then formulate the stiff limit of our   model in the form of a reduced PDE system:
		\begin{numcases}{}  
			-Z\phi_{xx}+\phi=Pm & $c(t)-1/2<x<c(t)+1/2$\label{eq:SL_phi}\\
			m_t=m_{xx}-(m\phi_x)_x & $c(t)-1/2<x<c(t)+1/2$\label{eq:SL_m}\\
			m_x=0 & $x=c(t)\pm 1/2$\label{eq:SL_m_BC}\\
			\phi(c-1/2)=\phi(c+1/2) & \label{eq:SL_phi_BC_1}\\
			\phi_x(c-1/2)=\phi_x(c+1/2) & \label{eq:SL_phi_BC_2}\\
			c_t=\phi_x(c+1/2). & \label{eq:SL_c}
		\end{numcases}
	Once again, the  total myosin mass is conserved   and its value is $1$.	Due to its relative simplicity, combined with the ability to still represent the main physical effects,   model \eqref{eq:SL_phi}-\eqref{eq:SL_c},  first introduced formally in \cite{RecPutTru2015},  will be the main focus of the present  study.
		
		 Note first that Model B   has stationary solution 
		 $$m=1,\,\,\,\phi=P.$$ As we are going to see, for $P$ sufficiently small, such  solution is asymptotically stable, but for large $P$, it becomes unstable and bifurcates into traveling wave solutions.  In the linearization of Model B about traveling waves, the invariance of traveling waves to translation manifests itself through a zero eigenvalue. By factorizing the  shifts, the corresponding eigenvector is identified with $0$, and in this way  the zero eigenvalue is effectively removed. 
		
		 The factorization of the shifts  is accomplished by changing coordinates via the transformation $x\to x-c(t)$. In the new coordinates, \eqref{eq:SL_phi}-\eqref{eq:SL_c} become
		\begin{numcases}{}
			-Z\phi_{xx}+\phi=Pm & $-1/2<x<1/2$\label{eq:SL_phi_fx}\\
			m_t=m_{xx}+\phi_x(1/2)m_x-(m\phi_x)_x & $-1/2<x<1/2$\label{eq:SL_m_fx}\\
			m_x=0 & $x=\pm1/2$\label{eq:SL_m_BC_fx}\\
			\phi(-1/2)=\phi(1/2) & \label{eq:SL_phi_BC_1_fx}\\
			\phi_x(-1/2)=\phi_x(1/2) & \label{eq:SL_phi_BC_2_fx}\\
			c_t=\phi_x(1/2). & \label{eq:SL_c_fx}
		\end{numcases}
		Observe that the center coordinate $c$ is partially decoupled from $m$. Therefore, we can drop the $c$ coordinate, effectively grouping together  all solutions that are the same up to a translation of the center    in particular , a stationary solution to \eqref{eq:SL_phi_fx}-\eqref{eq:SL_phi_BC_2_fx} becomes an element of an equivalence class of traveling wave solutions to \eqref{eq:SL_phi}-\eqref{eq:SL_c}. Furthermore, we see that if a stationary solution to \eqref{eq:SL_phi_fx}-\eqref{eq:SL_c_fx} is asymptotically stable, then the corresponding traveling wave solutions to \eqref{eq:SL_phi}-\eqref{eq:SL_c} are also asymptotically stable  ``up to shifts". Here we imply that a solution whose initial condition is a perturbation of a traveling wave solution may converge to a different traveling wave solution with the same velocity, but a different (``shifted'') center coordinate.
		
		In what follows we refer to the result of transforming coordinates in  Model B   and dropping the $c$ component as ``Model C,'' which we formulate more succinctly as:
		\begin{equation}\label{eq:model_C}
			\partial_t  m=F_C(m)=m_{xx}+\phi_x(1/2)m_x-(m\phi_x)_x,\quad \begin{cases} -Z\phi''+\phi=Pm & -1/2<x<1/2\\\phi(1/2)=\phi(-1/2)\\\phi_x(-1/2)=\phi_x(1/2).\end{cases} 
		\end{equation}
		The domain of the nonlinear operator $F_C$ is $X^2_C:=\left\{m\in H^2(-1/2,1/2):m_x(\pm 1/2)=0,\,\int_{-1/2}^{1/2}m\,dx=1\right\}.$

		\section{Stationary Solutions}\label{sec:stationary_solution_stability}
		

		
		We can now  address the stability (up to shifts) of homogeneous stationary solutions $m=1$ and $\phi=P$ of model B, by analyzing  the $m=1$ solution to model C.  Its  asymptotic stability  will be proved in the form of  the following:
		\begin{theorem}\label{thm:stability_of_SS}
			If $P/Z<\pi^2$, then there exists $\varepsilon>0$ such that if $m_0\in H^1(-1/2,1/2)$ with $\Vert m_0-1\Vert_{H^1}<\varepsilon$, then if $m(x,t)$ solves $\partial_t m=F_C(m)$ with $m(0,x)=m_0(x)$, then
			\begin{equation}
				\lim_{t\to \infty}\Vert m(\cdot,t)-m_0\Vert_{L^2}=0.
			\end{equation}
		\end{theorem}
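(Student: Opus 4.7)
The plan is to split $F_C$ about $m\equiv 1$ into a self-adjoint linear part with a strict spectral gap and a quadratic remainder, then close a nonlinear Grönwall argument. (The stated conclusion $\|m-m_0\|_{L^2}\to 0$ looks like a typo; the natural target is $\|m-1\|_{L^2}\to 0$, which I shall prove.) Setting $\tilde m := m-1$ and letting $\tilde\phi$ be the unique periodic solution of $-Z\tilde\phi''+\tilde\phi=P\tilde m$, direct substitution gives $\partial_t\tilde m = L\tilde m + N(\tilde m)$ with
\[
L\tilde m = \tilde m_{xx}-\tilde\phi_{xx},\qquad N(\tilde m) = \tilde\phi_x(1/2)\,\tilde m_x - (\tilde m\,\tilde\phi_x)_x;
\]
both summands of $N$ are quadratic in $\tilde m$ since $\phi_x\equiv 0$ at the base state.

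\emph{Self-adjointness and spectral gap of $L$.} The elliptic solution map $T:\tilde m\mapsto\tilde\phi$ is symmetric on $L^2(-1/2,1/2)$---the periodic boundary conditions on $\tilde\phi$ kill the boundary terms in a double integration by parts---and is compact by elliptic regularity, so $L$ is self-adjoint on the mean-zero subspace of $L^2$ with domain $\{\tilde m\in H^2:\tilde m_x(\pm 1/2)=0,\,\int\tilde m\,dx=0\}$ and its spectrum is purely discrete. Substituting $\tilde\phi_{xx}=(\tilde\phi-P\tilde m)/Z$ and using the identity $P\langle\tilde\phi,\tilde m\rangle = Z\|\tilde\phi_x\|_{L^2}^2+\|\tilde\phi\|_{L^2}^2$ (a periodic integration by parts) one finds
\[
\langle L\tilde m,\tilde m\rangle = -\|\tilde m_x\|_{L^2}^2 + \tfrac{P}{Z}\|\tilde m\|_{L^2}^2 - \tfrac{1}{ZP}\bigl(Z\|\tilde\phi_x\|_{L^2}^2+\|\tilde\phi\|_{L^2}^2\bigr).
\]
Dropping the nonpositive last term and applying the mean-zero Neumann Poincaré inequality $\pi^2\|\tilde m\|_{L^2}^2\leq\|\tilde m_x\|_{L^2}^2$ yields $\langle L\tilde m,\tilde m\rangle\leq-\sigma\|\tilde m\|_{L^2}^2$ with $\sigma:=\pi^2-P/Z>0$. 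Consequently every eigenvalue of $L$ lies in $(-\infty,-\sigma]$, and the analytic semigroup obeys $\|e^{tL}\|_{L^2\to L^2}\leq e^{-\sigma t}$ together with the smoothing bound $\|e^{tL}\|_{L^2\to H^1}\leq C(1+t^{-1/2})e^{-\sigma t}$.

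\emph{Nonlinear closure and main obstacle.} A careful integration by parts exploiting the Neumann condition on $\tilde m$ and the periodicity of $\tilde\phi$ produces a cancellation of all the boundary contributions generated by the two pieces of $N$, leaving the clean identity
\[
\langle N(\tilde m),\tilde m\rangle = -\tfrac12\int_{-1/2}^{1/2}\tilde\phi_{xx}\,\tilde m^2\,dx = \tfrac{P}{2Z}\int\tilde m^3\,dx - \tfrac{1}{2Z}\int\tilde\phi\,\tilde m^2\,dx.
\]
The 1D Sobolev embedding $H^1\hookrightarrow L^\infty$ and the elliptic bound $\|\tilde\phi\|_{H^2}\leq C\|\tilde m\|_{L^2}$ then give $|\langle N(\tilde m),\tilde m\rangle|\leq C\|\tilde m\|_{H^1}\|\tilde m\|_{L^2}^2$ and, analogously, $\|N(\tilde m)\|_{L^2}\leq C\|\tilde m\|_{H^1}^2$. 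The $L^2$ energy identity thus reads $\tfrac12\tfrac{d}{dt}\|\tilde m\|_{L^2}^2 \leq (-\sigma+C\|\tilde m\|_{H^1})\|\tilde m\|_{L^2}^2$, which closes as soon as $\|\tilde m(\cdot,t)\|_{H^1}$ is known to remain in a sufficiently small ball for all $t\geq 0$. The main obstacle is securing this global-in-time $H^1$ bound; I would do so by a standard bootstrap on Duhamel's formula using the semigroup estimates above together with $\|N(\tilde m)\|_{L^2}\leq C\|\tilde m\|_{H^1}^2$, which forces $\sup_{t\geq 0}\|\tilde m(t)\|_{H^1}\leq C\|\tilde m_0\|_{H^1}$ whenever $\|\tilde m_0\|_{H^1}$ is small. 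Grönwall then delivers the exponential $L^2$-decay; the crucial structural input that makes the argument work is precisely the nontrivial boundary-term cancellation that renders $\langle N(\tilde m),\tilde m\rangle$ cubic rather than quadratic.
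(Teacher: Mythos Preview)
Your proposal is correct and your linear analysis (self-adjointness of $L$, the spectral gap $\sigma=\pi^2-P/Z$ via Poincar\'e) coincides exactly with the paper's Lemma on self-adjointness and Proposition on eigenvalues. The nonlinear closure, however, follows a genuinely different route.

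The paper controls $\|\tilde m_x\|_{L^2}$ by a direct parabolic energy identity: writing $\partial_t\tilde m-\tilde m_{xx}=-\tilde\phi_{xx}+N(\tilde m)$, squaring, and integrating, the cross term becomes $\tfrac{d}{dt}\|\tilde m_x\|_{L^2}^2$, and one arrives at an ODI of the form $\tfrac{d}{dt}\|\tilde m_x\|_{L^2}^2\leq -R_1\|\tilde m_x\|_{L^2}^2+R_2$ with $R_2$ proportional to $(\sup_t\|\tilde m\|_{L^2})^2$. This shows that as long as $\|\tilde m\|_{L^2}$ stays below a threshold $U^\ast$, $\|\tilde m_x\|_{L^2}$ never exceeds its initial size; a continuation argument then closes the loop with the Duhamel bound on $\|\tilde m\|_{L^2}$. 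By contrast, you invoke analytic-semigroup smoothing $\|e^{tL}\|_{L^2\to H^1}\lesssim (1+t^{-1/2})e^{-\sigma t}$ and run a standard quadratic bootstrap on Duhamel in $H^1$ directly. Your approach is the textbook semilinear-parabolic argument and is slicker, since it never touches the $\|\tilde m_x\|$-ODI; the paper's hands-on energy lemma has the advantage of being elementary (no fractional powers or form-domain identification) and of decoupling the smallness thresholds for $\|\tilde m\|_{L^2}$ and $\|\tilde m_x\|_{L^2}$ explicitly. Your boundary-cancellation computation yielding $\langle N(\tilde m),\tilde m\rangle=-\tfrac12\int\tilde\phi_{xx}\tilde m^2$ is correct and gives a clean $L^2$ energy inequality that the paper does not exploit; the paper instead bounds $\|\Psi(u)\|_{L^2}\leq C\|u\|_{L^2}\|u\|_{H^1}$ pointwise (via Gagliardo--Nirenberg) and feeds this into Duhamel.
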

		
		A key step in proving Theorem \ref{thm:stability_of_SS}  is the analysis of stability of the corresponding linear problem. The linearized operator $S_C$ is defined by
		\begin{equation}\label{eq:SL_linearization}
			S_Cu=DF_C(1)u=u''-\phi'',
		\end{equation}
		where $\phi$ is defined as in \eqref{eq:model_C} (with $m=u$)
		and $u\in\tilde X^2_C:=\left\{u\in H^2(-1/2,1/2):u_x(\pm 1/2)=0,\,\int_{-1/2}^{1/2}u\,dx=0\right\}$. 
		
		We   first prove the following theorem establishing the linear stability of stationary states:
		\begin{theorem}\label{thm:SS_linear_stability}
			If $P/Z<\pi^2$, then there exists $\omega>0$ such that if $m(x,t)$ solves $\partial_t u=S_Cu$ in $\tilde X_2$ with $u(0,x)=u_0(x)$, then
			\begin{equation}
				\Vert u(\cdot,t)\Vert_{L^2}<\Vert u_0\Vert_{L^2}e^{-\omega t}.
			\end{equation}
		\end{theorem}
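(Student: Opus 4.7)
The plan is to establish the decay via a direct energy estimate rather than a full spectral decomposition. I would compute $\frac{d}{dt}\|u\|_{L^2}^2$, eliminate $\phi''$ terms by substituting the elliptic equation for $\phi$ rather than integrating by parts against them directly (which would leave nonvanishing boundary contributions because $u$ is not periodic), and then close the estimate using the Poincar\'e inequality for mean-zero Neumann functions on $(-1/2,1/2)$ together with Gr\"onwall's lemma. The hypothesis $P/Z<\pi^2$ will enter precisely as the condition that the Poincar\'e constant beats the destabilizing coefficient coming from the active term.

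Concretely, from $\partial_t u = u'' - \phi''$ I would first use the Neumann condition $u'(\pm 1/2)=0$ from $\tilde X^2_C$ to get $\int u u''\,dx = -\|u'\|_{L^2}^2$. For the second term I substitute $\phi''=(\phi-Pu)/Z$ directly from the elliptic equation in \eqref{eq:model_C}, turning $-\int u\phi''\,dx$ into $(P/Z)\|u\|_{L^2}^2 - (1/Z)\int u\phi\,dx$. The remaining inner product is then rewritten using the elliptic equation once more as $\int u\phi\,dx = \tfrac{1}{P}\int(-Z\phi''+\phi)\phi\,dx$, and only at this point do I integrate by parts — this time on $\phi''\phi$ alone — with the boundary contribution vanishing thanks to the periodic conditions $\phi(\pm 1/2)$ and $\phi'(\pm 1/2)$ being matched. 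This yields the clean identity
\begin{equation*}
\frac{1}{2}\frac{d}{dt}\|u\|_{L^2}^2 = -\|u'\|_{L^2}^2 + \frac{P}{Z}\|u\|_{L^2}^2 - \frac{1}{P}\|\phi'\|_{L^2}^2 - \frac{1}{PZ}\|\phi\|_{L^2}^2.
\end{equation*}

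Discarding the two nonpositive $\phi$-terms and using the Poincar\'e inequality $\|u'\|_{L^2}^2\geq \pi^2\|u\|_{L^2}^2$, valid for mean-zero functions on $(-1/2,1/2)$ with Neumann data (the first nonzero eigenvalue of $-\partial_x^2$ under those constraints is $\pi^2$, realized by $\cos(\pi(x+1/2))$), gives $\frac{d}{dt}\|u\|_{L^2}^2 \leq -2(\pi^2-P/Z)\|u\|_{L^2}^2$. Gr\"onwall then produces $\|u(\cdot,t)\|_{L^2}\leq \|u_0\|_{L^2}\,e^{-(\pi^2-P/Z)t}$, and choosing any $\omega\in(0,\pi^2-P/Z)$ makes the inequality strict for $t>0$, matching the statement.

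The only delicate point I anticipate is the mismatch between the Neumann boundary data on $u$ and the periodic data on $\phi$: a naive integration by parts of $\int u\phi''\,dx$ produces a boundary residue $\phi'(1/2)(u(1/2)-u(-1/2))$ that does not vanish in general. The strategy above sidesteps this by using the elliptic PDE itself — instead of integration by parts — to remove $\phi''$ from the mixed term, so that every integration by parts performed is either on $u$ alone (Neumann data kills the boundary term) or on $\phi$ alone (periodic data kills it). A purely spectral alternative — computing the eigenvalues of $S_C$ through the fourth-order ODE $Zu'''' - (1-P+\lambda Z)u'' + \lambda u = 0$ together with the mixed boundary conditions and checking that all eigenvalues are negative under $P/Z<\pi^2$ — is also available but is more computational, whereas the energy method delivers a sharp explicit decay rate directly.
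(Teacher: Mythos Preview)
Your argument is correct, and the core algebraic identity you derive,
\[
\frac{1}{2}\frac{d}{dt}\Vert u\Vert_{L^2}^2 = -\Vert u'\Vert_{L^2}^2 + \frac{P}{Z}\Vert u\Vert_{L^2}^2 - \frac{1}{P}\Vert\phi'\Vert_{L^2}^2 - \frac{1}{PZ}\Vert\phi\Vert_{L^2}^2,
\]
is exactly the computation the paper carries out in Proposition~\ref{prop:SS_eigenvalues} when bounding the Rayleigh quotient $\langle S_Cu,u\rangle$. Your handling of the boundary-condition mismatch (substituting the elliptic equation rather than integrating $\int u\phi''$ by parts directly) is the same trick the paper uses there.

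Where you differ is in how this identity is deployed. The paper treats it as an eigenvalue bound only, and then builds additional machinery: it proves $S_C$ is self-adjoint (Lemma~\ref{lem:SS_self-adjoint}), proves $S_C^{-1}$ is compact (Lemma~\ref{lem:SS_inverse_compact}), invokes the spectral theorem to obtain an orthonormal eigenbasis, and finally decomposes $u(x,t)=\sum c_n(t)v_n(x)$ to conclude decay. You instead recognize that the Rayleigh-quotient inequality holds for \emph{every} $u\in\tilde X^2_C$, not just eigenvectors, so it immediately becomes a differential inequality for $\Vert u(\cdot,t)\Vert_{L^2}^2$ and Gr\"onwall closes the argument. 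Your route is strictly more elementary---it bypasses self-adjointness, compactness, and the spectral theorem entirely---and delivers the same explicit rate $\omega=\pi^2-P/Z$. The paper's longer route does buy something, namely the structural facts (self-adjointness, eigenbasis) that set up the contrast with the non-self-adjoint traveling-wave linearization later on, but for Theorem~\ref{thm:SS_linear_stability} taken in isolation your energy method is the cleaner proof.
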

		
		
		The  main tool in proving Theorem \ref{thm:SS_linear_stability} is expectedly the spectral theorem for compact self-adjoint operators \cite{Kre1991}, which states that a compact, self-adjoint operator has a basis of eigenvectors. The operator $S_C$ is not compact, but we will show that its inverse is, and $S_C$ shares the eigenvectors of its inverse. Therefore, we may reduce the problem of linear stability to the problem of stability of individual eigenstates, with the exponential decay in Theorem \ref{thm:SS_linear_stability} given by a uniform negativity of the corresponding eigenvalues. To complete this proof, we must only show (via three Lemmas below)  that (i) $S_C$ is self-adjoint, (ii) all eigenvalues of $S_C$ are negative and bounded away from zero, and (iii) $S_C$ has compact inverse.
		
		First we prove self-adjointness. Let $X$ be a Hilbert space and let $D(A)\subset X$ be a dense subspace of $X$ which is the domain of an operator $A:D(A)\to X$. Recall that the adjoint of  $A$ is an operator $A^\ast$ such that $\langle u,A^\ast v\rangle=\langle Au,v\rangle$ for all $u,v\in D(A)$. The operator $A$ is self-adjoint if $A=A^\ast$. Therefore, we introduce the following bilinear form to determine whether or not an operator is self-adjoint:
		\begin{definition}\label{def:adjoint_commutator}
			Let $X$ be an inner product space, and let $A:X\to X$. The \textit{adjoint commutator} of $A$ is $H:X\times X\to\mathbb R$ defined by
			\begin{equation}
				H(u,v)=\langle Au,v\rangle-\langle u,Av\rangle.
			\end{equation}
		\end{definition}
		If the adjoint commutator is identically zero, then $A$ is self-adjoint. Otherwise, $A$ is non-self-adjoint. 
		
		\begin{lemma}\label{lem:SS_self-adjoint}
			The linearization $S_C$ of $F_C$ about the stationary solution $m=1$ is self-adjoint with respect to the $L^2$ inner product.
		\end{lemma}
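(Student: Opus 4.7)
The plan is to verify directly that the adjoint commutator $H(u,v)=\langle S_C u,v\rangle-\langle u,S_C v\rangle$ vanishes for all $u,v\in\tilde X_C^2$. Writing $S_C u=u''-\phi_u''$, where $\phi_u$ denotes the unique solution of the elliptic problem $-Z\phi_u''+\phi_u=Pu$ with periodic boundary conditions $\phi_u(-1/2)=\phi_u(1/2)$ and $\phi_u'(-1/2)=\phi_u'(1/2)$, I would split
\begin{equation}
H(u,v)=\int_{-1/2}^{1/2}(u''v-uv'')\,dx-\int_{-1/2}^{1/2}(\phi_u''v-u\phi_v'')\,dx
\end{equation}
and show that each of the two pieces vanishes separately.

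For the first piece, I would integrate by parts twice. The resulting boundary terms contain only $u'v,\,uv'$ and their counterparts, all evaluated at $x=\pm 1/2$, and all of them vanish because the Neumann condition $u'(\pm 1/2)=v'(\pm 1/2)=0$ is built into the domain $\tilde X_C^2$. This gives $\int u''v\,dx=\int uv''\,dx$ immediately.

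For the second piece, the key idea is to show that the solution map $u\mapsto\phi_u$ is symmetric in the $L^2$ inner product, whence $\int\phi_u''v\,dx=\int u\phi_v''\,dx$ will follow upon substituting $\phi_u''=(\phi_u-Pu)/Z$ and $\phi_v''=(\phi_v-Pv)/Z$, because the $\int uv\,dx$ cross terms cancel and we are left needing $\int\phi_u v\,dx=\int u\phi_v\,dx$. To prove the latter, I substitute $Pu=-Z\phi_u''+\phi_u$ and $Pv=-Z\phi_v''+\phi_v$, so the identity reduces to $\int\phi_u''\phi_v\,dx=\int\phi_u\phi_v''\,dx$. Integrating by parts twice, the boundary contributions are of the form $\phi_u'(1/2)\phi_v(1/2)-\phi_u'(-1/2)\phi_v(-1/2)$ and its analogue with $u,v$ swapped, both of which vanish because the periodic boundary conditions on $\phi_u$ and $\phi_v$ force $\phi_u'(1/2)=\phi_u'(-1/2)$ and $\phi_v(1/2)=\phi_v(-1/2)$.

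The main obstacle, modest as it is, will be bookkeeping the boundary terms correctly: the self-adjointness arises from the interplay between the Neumann condition on $u,v$ and the periodic conditions on $\phi_u,\phi_v$, and one has to verify that these two different kinds of boundary conditions cooperate to kill every boundary term that appears. Once this is checked, the proof is complete. One also notes that the constraint $\int u\,dx=\int v\,dx=0$ plays no role here (it only becomes relevant in guaranteeing that the spectrum of $S_C$ sits in the correct half-plane); self-adjointness holds on the larger domain as well.
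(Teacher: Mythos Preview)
Your proof is correct and follows essentially the same approach as the paper: both compute the adjoint commutator, eliminate the $u''v-uv''$ contribution via the Neumann conditions, substitute $\phi''=(\phi-Pu)/Z$ to reduce the remaining cross term to a symmetry statement for the solution map $u\mapsto\phi_u$, and finish by using the periodic boundary conditions on $\phi$ to kill the boundary terms in $\int\phi_u''\phi_v-\int\phi_u\phi_v''$. Your presentation is slightly more modular (splitting into two pieces and isolating the identity $\langle\phi_u,v\rangle=\langle u,\phi_v\rangle$), and your remark that the zero-mean constraint is irrelevant for self-adjointness is a correct and worthwhile observation not made explicit in the paper.
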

		\begin{proof}
			Let $H:\tilde X_2\times\tilde X_2\to\mathbb R$ be the adjoint commutator of $S_C$. Let $u_1,u_2\in\tilde X_2$. Let $\phi_i$ solve $-Z\phi_i''+\phi_i=Pu_i$ with periodic boundary conditions. Then
			\begin{align}
				H(u_1,u_2)&=\int_{-1/2}^{1/2} u_2(u_1''-\phi_1'')\,dx-\int_{-1/2}^{1/2}u_1(u_2''-\phi_2'')\,dx\\
				&=\int_{-1/2}^{1/2}(u_1'u_2'-u_2'u_1')\,dx+\int_{-1/2}^{1/2}\left[u_1\frac{\phi_2-Pu_2}{Z}-u_2\frac{\phi_1-Pu_1}{Z}\right]\,dx\\
				&=\frac{1}{PZ}\int_{-1/2}^{1/2}\left[(-Z\phi_1''+\phi_1)\phi_2-(-Z\phi_2''+\phi_2)\phi_1\right]\,dx\\
				&=\frac{1}{P}\int_{-1/2}^{1/2}(\phi_1'\phi_2'-\phi_2'\phi_1')\,dx\\
				&=0.
			\end{align}
		\end{proof}
		
		Now we show the negativity of the eigenvalues of $S_C$.	
		\begin{proposition}\label{prop:SS_eigenvalues}
			If $P/Z<\pi^2$, then all eigenvalues of $S_C$ are negative and bounded away from $0$.
		\end{proposition}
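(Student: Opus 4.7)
The plan is to convert the eigenvalue equation $S_Cu=\lambda u$ into a Rayleigh-type identity by pairing with $u$ in $L^2$, and then apply the Poincar\'e--Wirtinger inequality on mean-zero functions. The subtle point to watch out for is that $u$ obeys Neumann conditions while $\phi$ obeys periodic ones, so one cannot freely integrate by parts in the pairing $\langle \phi'',u\rangle$ without generating boundary terms. I would bypass this entirely by never integrating by parts across the mismatch, and instead using the elliptic relation $-Z\phi''+\phi=Pu$ as an algebraic substitution.

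Concretely, assume $S_Cu=\lambda u$ with $u\in\tilde X_C^2$ nonzero; by Lemma \ref{lem:SS_self-adjoint} we may take $\lambda$ real. Pairing with $u$ in $L^2$ gives $\langle u'',u\rangle - \langle \phi'',u\rangle = \lambda\|u\|_{L^2}^2$. The first term integrates by parts to $-\|u'\|_{L^2}^2$ using $u'(\pm 1/2)=0$. For the second, substitute $\phi''=(\phi-Pu)/Z$ to obtain $\langle \phi'',u\rangle = Z^{-1}\langle \phi,u\rangle - (P/Z)\|u\|_{L^2}^2$. The remaining cross-term $\langle \phi,u\rangle$ I would eliminate by a second use of $Pu=-Z\phi''+\phi$, which reduces it to a pairing of $\phi$ with itself; here the integration by parts of $\langle \phi'',\phi\rangle$ is legitimate because the periodic conditions on $\phi$ and $\phi'$ kill every boundary contribution, yielding $P\langle u,\phi\rangle = Z\|\phi'\|_{L^2}^2+\|\phi\|_{L^2}^2$. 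Assembling these identities, I expect the clean expression
\[
\lambda\|u\|_{L^2}^2 \;=\; -\|u'\|_{L^2}^2 + \frac{P}{Z}\|u\|_{L^2}^2 - \frac{1}{P}\|\phi'\|_{L^2}^2 - \frac{1}{PZ}\|\phi\|_{L^2}^2.
\]

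The last two terms are manifestly nonpositive and can be discarded as an upper bound. The zero-mean constraint $\int u\,dx=0$ built into $\tilde X_C^2$ now activates the Poincar\'e--Wirtinger inequality $\|u'\|_{L^2}^2 \geq \pi^2\|u\|_{L^2}^2$, with the sharp constant $\pi^2$ arising as the first nonzero Neumann eigenvalue of $-\partial_x^2$ on the unit interval. This yields $\lambda \leq P/Z - \pi^2$, a bound that is strictly negative and independent of $u$ precisely when $P/Z<\pi^2$, giving both negativity and uniform separation from zero in a single step. The only real obstacle is the bookkeeping around the two different boundary conditions, and the algebraic use of the elliptic equation is what circumvents it; no compactness or semigroup input is needed at this stage, those entering only later to upgrade this eigenvalue estimate to the semigroup bound of Theorem \ref{thm:SS_linear_stability}.
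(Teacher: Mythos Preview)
Your proposal is correct and follows essentially the same approach as the paper: pair the eigenvalue equation with $u$, integrate by parts on $\langle u'',u\rangle$ via the Neumann condition, replace $\phi''$ algebraically by $(\phi-Pu)/Z$, eliminate the cross term $\langle\phi,u\rangle$ by a second use of the elliptic relation and a periodic-boundary integration by parts in $\langle\phi'',\phi\rangle$, then drop the nonpositive $\phi$-terms and apply the sharp Poincar\'e inequality to conclude $\lambda\le P/Z-\pi^2<0$. Your explicit discussion of the mismatched boundary conditions and why the algebraic substitution avoids any illegitimate integration by parts is, if anything, more careful than the paper's presentation.
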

		\begin{proof}
			Assume $P/Z<\pi^2$. Let $u$ be an eigenvector of $S_C$ and let $\lambda$ be its eigenvalue. Since $S_C$ is self adjoint, $\lambda$ real and $u$ is real-valued. Without loss of generality, we may assume $\Vert u\Vert_{L^2}=1$. Let $\phi$ solve $-Z\phi''+\phi=Pu$ with periodic boundary conditions on $(-1/2,1/2)$. Then $u$ and $\lambda$ satisfy $u''-\phi''=\lambda u$. Multiplying by $u$ and integrating we find
			\begin{align*}
				\lambda&=\int_{-1/2}^{1/2}u''u-\phi''u\\
				&=-\int_{-1/2}^{1/2}u'^2\,dx-\frac{1}{Z}\int_{-1/2}^{1/2}(\phi-Pu)u\,dx\\
				&=-\int_{-1/2}^{1/2}u'^2\,dx+\frac{P}{Z}\int_{-1/2}^{1/2} u^2\,dx-\frac{1}{PZ}\int_{-1/2}^{1/2}\phi(-Z\phi''+\phi)\,dx\\
				&=-\int_{-1/2}^{1/2}u'^2\,dx+\frac{P}{Z}-\frac{1}{P}\int_{-1/2}^{1/2}\phi'^2\,dx-\frac{1}{PZ}\int_{-1/2}^{1/2}\phi^2\,dx\\
				&\leq-\int_{-1/2}^{1/2}u'^2\,dx+\frac{P}{Z}.
			\end{align*}
			It is well known that the optimal constant in the Poincar\'e inequality in an interval of length 1 is $1/\pi$ \cite{PayWei1960}. The Poincar\'e inequality applies to $u$ since $\int_{-1/2}^{1/2}u(x)\,dx=0$. Therefore, we conclude that
			\begin{equation}
				\lambda\leq \frac{P}{Z}-\pi^2<0.
			\end{equation}
			Therefore all eigenvalues of $S_C$ are negative and bounded away from $0$.
		\end{proof}

		Finally, we show that the inverse of the linearization is compact.
		\begin{lemma}\label{lem:SS_inverse_compact}
			If $P/Z<\pi^2$, then $S_C$ is invertible and $S_C^{-1}:X\to X$ is compact.
		\end{lemma}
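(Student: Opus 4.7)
The plan is to combine the spectral information from Proposition \ref{prop:SS_eigenvalues} with a Fredholm-type decomposition of $S_C$ and the Rellich--Kondrachov embedding to obtain both the bounded inverse and its compactness. Concretely, set $\tilde X^0_C := \{ f \in L^2(-1/2,1/2) : \int_{-1/2}^{1/2} f\,dx = 0 \}$ and first check that $S_C$ sends $\tilde X^2_C$ into $\tilde X^0_C$: $\int u''\,dx = 0$ by the Neumann condition on $u$, and $\int \phi''\,dx = 0$ by the periodicity of $\phi'$.

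Injectivity is immediate: if $S_C u = 0$ for a nonzero $u \in \tilde X^2_C$, then $u$ would be an eigenvector with eigenvalue $0$, contradicting the bound $\lambda \le P/Z - \pi^2 < 0$ from Proposition \ref{prop:SS_eigenvalues}. For surjectivity, I would decompose $S_C = A - K_0$, where $A u := u''$ and $K_0 u := \phi''$ with $\phi$ the periodic solution of $-Z\phi'' + \phi = P u$. The map $A : \tilde X^2_C \to \tilde X^0_C$ is an isomorphism: given $f \in \tilde X^0_C$, two successive integrations produce a unique primitive satisfying $u_x(\pm 1/2) = 0$ (the Neumann condition is automatic since $\int f\,dx = 0$), and the constant of integration is pinned by the mean-zero constraint. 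Rewriting $K_0 u = (\phi - Pu)/Z$ via the defining equation for $\phi$, elliptic regularity for the one-dimensional periodic Helmholtz-type problem provides a bounded map $u \mapsto \phi$ from $L^2$ to $H^2$, so each summand of $K_0$ factors through the compact embedding $H^2 \hookrightarrow L^2$ (Rellich--Kondrachov on the bounded interval), giving that $K_0 : \tilde X^2_C \to \tilde X^0_C$ is compact. Writing $S_C = A(I - A^{-1}K_0)$ with $A^{-1} K_0$ compact, the Fredholm alternative reduces invertibility of $S_C$ to its injectivity, which has already been established; the open mapping theorem then produces a bounded inverse $S_C^{-1} : \tilde X^0_C \to \tilde X^2_C$.

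Compactness of $S_C^{-1}$ viewed as an operator on the mean-zero subspace of $L^2$ (the natural ambient space in view of \eqref{eq:1D_myosin_conservation_nondimensionalized}) then follows by composing this bounded inverse with the compact inclusion $\tilde X^2_C \hookrightarrow \tilde X^0_C$. The main potential obstacle I anticipate is the careful bookkeeping of the interacting boundary conditions---Neumann on $u$ versus periodic on $\phi$---and the mean-zero constraint, which together determine the correct codomain $\tilde X^0_C$ and ensure the decomposition is consistent; the remaining analytic ingredients (Rellich--Kondrachov, one-dimensional elliptic regularity with periodic boundary conditions, and the Fredholm alternative) are entirely standard.
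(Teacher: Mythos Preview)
Your argument is correct and in fact cleaner than the paper's. The paper establishes invertibility by appealing to Lax--Milgram (deferring details to a later proposition), then proves boundedness of $S_C^{-1}$ by contradiction: assuming a sequence $v_k$ with $\Vert v_k\Vert_{L^2}=1$ and $\Vert S_C v_k\Vert_{L^2}\to 0$, it extracts an $H^1$ bound from the quadratic form $\langle S_C v_k, v_k\rangle$, then uses Banach--Alaoglu and the compact embedding $H^1\hookrightarrow L^2$ to produce a nonzero kernel element, contradicting Proposition~\ref{prop:SS_eigenvalues}. Compactness is obtained by rerunning the same sequential argument. Your route via the decomposition $S_C = A(I - A^{-1}K_0)$ with $A=\partial_x^2$ an isomorphism and $K_0$ compact is more structural: it isolates the Fredholm mechanism directly and gets boundedness of $S_C^{-1}$ for free from the open mapping theorem, then compactness in one line from $\tilde X^2_C\hookrightarrow\tilde X^0_C$. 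This is exactly the decomposition the paper itself invokes later (in the proof of Theorem~\ref{thm:bifurcation}, writing $D_\mu\mathcal F(0,0)=B+K$ with $B^{-1}$ compact) to verify the Fredholm property, so your approach is consistent with the paper's own toolkit and arguably the more natural choice here. The only cosmetic point is that the paper phrases compactness as $S_C^{-1}:L^2\to L^2$, whereas you (correctly) restrict to the mean-zero subspace $\tilde X^0_C$; since the range of $S_C$ is contained in $\tilde X^0_C$, your formulation is the precise one.
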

		\begin{proof}
			Assume $P/Z<\pi^2$. By proposition \ref{prop:SS_eigenvalues}, $0$ is not an eigenvalue of $S_C$. That $S_C$ is invertible follows from the Lax-Milgram Theorem (see Proposition \ref{prop:exists_inverse} for details).
			
			First we show that $S_C^{-1}$ is bounded. Suppose, to the contrary that it is unbounded. Then there exist sequences $(v_k)\subset \tilde X^2_C$ and $(w_k)\subset L^2(-1/2,1/2)$ such that
			\begin{equation}
				S_Cv_k=w_k,\quad \Vert v_k\Vert_{L^2}=1,\quad\Vert w_k\Vert_{L^2}\leq 1/k.
			\end{equation}
			Let $\phi_k$ satisfy $-Z\phi_k''+\phi_k=Pv_k$ with periodic boundary conditions. Then the following sequence is bounded:
			\begin{equation}\label{eq:compact_resolvent_inner_product}
				\begin{split}
					\langle w_k,v_k\rangle_{L^2}&=\langle S_Cv_k,v_k\rangle_{L^2}=-\Vert v_k'\Vert_{L^2}^2+\langle \phi_k'',v_k\rangle_{L^2}.
				\end{split}
			\end{equation}
			The sequence $\langle \phi_k'',v_k\rangle_{L^2}$ is bounded in $k$ due to Proposition \ref{prop:elliptic_estimates} in Appendix A. Since the \eqref{eq:compact_resolvent_inner_product} as a whole is bounded, we conclude that $\Vert v_k'\Vert_{L_2}$ is bounded in $k$ as well.
			
			Since $\Vert v_k\Vert_{L^2}$ and $\Vert v_k'\Vert_{L^2}$ are both bounded, we conclude that $(v_k)$ is bounded with respect to the $H^1$ norm. By the Banach-Alaoglu Theorem \cite{Rud1991}, there is a subsequence also called $v_k$ which converges weakly in $H^1(-1/2,1/2)$ and thus also in $L^2$. By Morrey's inequality \cite{Dri2003} and the Arzela-Ascoli theorem \cite{Bou2013}, $H^1(-1/2,1/2)\subset\subset L^2(-1/2,1/2)$, so we may assume that $(v_k)$ converges strongly to some $v\in L^2$. Since $\Vert w_k\Vert_{L^2}\leq 1/k$, $w_k\to 0$ in $L^2(-1/2,1/2)$. Therefore, $v$ is a weak solution to $S_Cv=0$. Since $\lambda I-S_C$ is invertible, we conclude that $v=0$. But since $\Vert v_k\Vert_{L^2}=1$, we also have $\Vert v\Vert_{L^2}=1$, a contradiction. Therefore, $S_C^{-1}$ is a bounded operator.
			
			Now we show that $S_C^{-1}:L^2(-1/2,1/2)\to L^2(-1/2,1/2)$ is compact. To that end, suppose that $(v_k)\subset \tilde X^2_C$ and $(w_k)\subset L^2(-1/2,1/2)$ such that 
			\begin{equation}
				S_Cv_k=w_k\quad\text{and}\quad \Vert w_k\Vert_{L^2}\leq 1.
			\end{equation}
			We need to show that there $(v_k)$ has a convergent subsequence. But this follows from the same logic as the above step. Since $S_C^{-1}$ is a bounded operator, $(v_k)$ is a bounded sequence in $L^2(-1/2,1/2)$. Therefore, once again each term in \eqref{eq:compact_resolvent_inner_product} is bounded, so $(v_k)$ has a weakly convergent subsequence in $X^1$ and a strongly convergent subsequence in $X^0$. Therefore, $S_C^{-1}$ is compact.
		\end{proof}
		
		We can now prove the linear stability of stationary states.
		\begin{proof}[Proof of Theorem \ref{thm:SS_linear_stability}]
			Assume $P/Z<\pi^2$. Then by Proposition \ref{prop:SS_eigenvalues}, there exists $\omega>0$ so that all for all eigenvalues $\lambda$ of $S_C$, $\lambda\leq-\omega$ (in fact, we may choose $\omega=\pi^2-P/Z$). By Lemma \ref{lem:SS_inverse_compact}, $S_C$ has compact inverse. By Lemma \ref{lem:SS_self-adjoint}, $S_C$ is self-adjoint, and therefore $S_C^{-1}$ is also self-adjoint. By the spectral theorem \cite{Kre1991}, the eigenvectors of $S_C^{-1}$ form an orthogonormal set that spans a dense subset of $\tilde X_2$. Denote these eigenvectors $(v_n)$ for $n\in\mathbb N$ (since $\tilde X^2_C$ has countable dimension, we can enumerate the eigenvectors in this way). The eigenvectors of $S_C^{-1}$ are also eigenvectors of $S_C$. For each $n\in\mathbb N$, let $\lambda_n$ be the eigenvalue of $\lambda_n$ corresponding to $v_n$.
			
			Suppose $u(x,t)$ solves $\partial_t u=S_C u$ in $\tilde X_2$ with $u(0,x)=u_0(x)\in\tilde X^2_C$. Since the span of $(v_n)$ is dense in $\tilde X^2_C$, we may write any $u(x,t)$ as an infinite linear combination of these eigenvectors:
			\begin{equation}
				u(x,t)=\sum_{n=1}^\infty c_n(t)v_n(x),
			\end{equation}
			for coefficients $c_n:[0,\infty)\to\mathbb R$. Substituting this expansion into the linear evolution equation, we obtain
			\begin{equation}
				\sum_{n=1}^\infty c_n'(t)v_n(x)=\sum_{n=1}^\infty c_n(t)S_C v_n(x)=\sum_{n=1}^\infty \lambda_nc_n(t)v_n(x).
			\end{equation}
			By the orthogonality of the eigenvectors, we conclude that the sums must agree term-by-term, so for each $n$, $c_n'=\lambda_n c_n$, so
			\begin{equation}
				c_n(t)=c_n(0)e^{\lambda_n t}.
			\end{equation}
			Thus,
			\begin{align*}
				\Vert u(\cdot,t)\Vert_{L^2}&=\sqrt{\langle u,u\rangle_{L^2}}=\sqrt{\sum_{n=1}^\infty c_n^2(t)}=\sqrt{\sum_{n=1}^\infty c_n^2(0)e^{2\lambda_nt}}=e^{-\omega t}\sqrt{\sum_{n=1}^\infty c_n^2(0)}=e^{-\omega t}\Vert u(0,\cdot)\Vert_{L^2}.
			\end{align*}
			Thus, the desired result holds.
		\end{proof}
		
		
		It remains to show that the full, nonlinear stability result of Theorem \ref{thm:stability_of_SS} holds. To this end, we consider the \textit{nonlinear part} $\Psi$ of $F_C$ defined for $u\in\tilde X^2_C$  by
		\begin{equation}\label{eq:Psi}
			\Psi(u)=F_C(1+u)-S_C u=\phi'(1/2)u'-(u\phi')',
		\end{equation}
		where $\phi$ solves $-Z\phi''+\phi=Pu$ with periodic boundary conditions in $(-1/2,1/2)$. The key to our proof of nonlinear stability is showing that 
		the nonlinear part $\Psi$ dominates the linear part $S_C$. We begin with a Lemma which gives a bound on $\Psi(u)$ in terms of $u$.
		
		\begin{lemma}\label{lem:Psi_estimate}
			Let $\Psi:H^2([-1/2,1/2])\to H^1([-1/2,1/2])$ be defined as in \eqref{eq:Psi}. Then there exists $C>0$ independent of $u$, $P$, and $Z$ such that
			\begin{equation}
				\Vert\Psi(u)\Vert_{L^2}\leq \frac{CP}{Z}\Vert u\Vert_{L^2}\Vert u\Vert_{H^1}.
			\end{equation}
		\end{lemma}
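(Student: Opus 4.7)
The plan is to reduce everything to a bound on $\|\phi''\|_{L^2}$, which will scale cleanly like $P/Z$ via the elliptic equation $-Z\phi''+\phi = Pu$. Expanding $(u\phi')' = u'\phi' + u\phi''$ and regrouping, I would first rewrite
\begin{equation*}
\Psi(u) = \bigl(\phi'(1/2) - \phi'(x)\bigr)\,u'(x) - \phi''(x)\,u(x).
\end{equation*}
This decomposition is the crucial step: both terms now explicitly involve the second derivative of $\phi$, which is the object that carries the $1/Z$ factor from the elliptic equation.

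Next I would obtain two elementary estimates. Using the fundamental theorem of calculus and Cauchy--Schwarz on the unit interval,
\begin{equation*}
\sup_{x\in[-1/2,1/2]}\bigl|\phi'(1/2) - \phi'(x)\bigr| = \sup_x\left|\int_x^{1/2}\phi''(y)\,dy\right| \leq \|\phi''\|_{L^2}.
\end{equation*}
Testing $-Z\phi''+\phi = Pu$ against $\phi$ and using that the boundary term vanishes by the periodic conditions on $\phi$ yields $\|\phi\|_{L^2}^2 \leq P\|u\|_{L^2}\|\phi\|_{L^2}$, hence $\|\phi\|_{L^2}\leq P\|u\|_{L^2}$. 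Reading $\phi'' = (\phi-Pu)/Z$ directly from the PDE then gives
\begin{equation*}
\|\phi''\|_{L^2} \leq \tfrac{1}{Z}\bigl(\|\phi\|_{L^2} + P\|u\|_{L^2}\bigr) \leq \tfrac{2P}{Z}\|u\|_{L^2}.
\end{equation*}

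Combining the pieces, the first term of $\Psi(u)$ is controlled in $L^2$ by $\|\phi''\|_{L^2}\|u'\|_{L^2} \leq (2P/Z)\|u\|_{L^2}\|u\|_{H^1}$. For the second term, I would use the one-dimensional Sobolev embedding $H^1\hookrightarrow L^\infty$ on a bounded interval to get
\begin{equation*}
\|\phi''\,u\|_{L^2} \leq \|\phi''\|_{L^2}\|u\|_{L^\infty} \leq \tfrac{2CP}{Z}\|u\|_{L^2}\|u\|_{H^1}.
\end{equation*}
Adding these two bounds and using $\|u'\|_{L^2}\leq \|u\|_{H^1}$ delivers the claim.

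There is no deep technical obstacle here; the main subtlety is choosing the right decomposition. The naive route, bounding $\Psi(u)$ via $\|\phi'\|_{L^\infty}\|u\|_{H^1}$, only produces a prefactor of order $P/\sqrt{Z}$ because the energy identity gives $\|\phi'\|_{L^2}\sim P/\sqrt{Z}$; this is too weak for the downstream nonlinear stability argument. Regrouping $\Psi$ so that \emph{both} surviving pieces carry $\phi''$ is what exposes a full factor of $1/Z$ in each term and thus yields the sharp $P/Z$ scaling asserted in the lemma.
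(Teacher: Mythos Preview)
Your proof is correct, and it follows the same overall strategy as the paper: write $\Psi(u)=(\phi'(1/2)-\phi')u'-u\phi''$ and control everything through the elliptic estimate $\|\phi''\|_{L^2}\le 2P/Z\,\|u\|_{L^2}$. The tactical choices differ, though, and yours are somewhat more economical. The paper splits $\phi'(1/2)-\phi'(x)$ into its two pieces and bounds each separately via the pointwise estimate $\|\phi'\|_{L^\infty}\le (P/2Z)\|u\|_{L^2}$, which in the paper is obtained from the explicit Green's function (Appendix~A); you instead keep the difference intact and bound it by $\|\phi''\|_{L^2}$ via the fundamental theorem of calculus, which is self-contained. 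For the $u\phi''$ term, the paper uses an $L^4$--$L^4$ H\"older split followed by the Gagliardo--Nirenberg inequality $\|u\|_{L^4}\lesssim\|u\|_{H^1}^{1/2}\|u\|_{L^1}^{1/2}$, whereas you use the simpler $L^2$--$L^\infty$ split with the one-dimensional Sobolev embedding $H^1\hookrightarrow L^\infty$. Both routes give the same $P/Z$ prefactor; your argument has the advantage of not relying on the Green's function computation or the interpolation inequality, while the paper's route has the mild advantage that the Appendix~A estimates are reused elsewhere.
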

		\begin{proof}
			We make a direct calculation using estimates from Proposition \ref{prop:elliptic_estimates} in Appendix A:
			\begin{align}
				\Vert\Psi(u)\Vert_{L^2}^2&=\int_{-1/2}^{1/2}[(\phi'(1/2)-\phi'(x))u'(x)-u(x)\phi''(x)]^2\,dx\\
				&\leq2\int_{-1/2}^{1/2}(\phi'(1/2)-\phi'(x))^2(u'(x))^2\,dx+2\int_{-1/2}^{1/2}(\phi''(x))^2(u(x))^2\,dx\\
				&\leq 4|\phi'(1/2)|^2\Vert u'\Vert_{L^2}^2+4\Vert\phi'^2u'^2\Vert_{L^1}+2\Vert\phi''^2u^2\Vert_{L^1}\\
				&\leq \frac{P^2}{Z^2}\Vert u\Vert_{L^1}^2\Vert u'\Vert_{L^2}^2+4\Vert\phi'\Vert_{L^\infty}^2\Vert u'\Vert_{L^2}^2+2\Vert\phi''\Vert_{L^4}^2\Vert u\Vert_{L^4}^2\\
				&\leq \frac{2P^2}{Z^2}\Vert u\Vert_{L^1}^2\Vert u\Vert_{H^1}^2+\frac{8P^2}{Z^2}\Vert u\Vert_{L^4}^4\label{eq:Psi_pre}
			\end{align}
			By H\"older's inequality, $\Vert u\Vert_{L^1}\leq \Vert u\Vert_{L^2}$. From the Gagliardo-Nirenberg inequality, there exists $C_1>0$ independent of $u$ such that
			\begin{equation}\label{eq:GN_inequality}
				\Vert u\Vert_{L^4}\leq C_1\Vert u\Vert_{H^1}^{1/2}\Vert u\Vert_{L^1}^{1/2}.
			\end{equation}
			Substituting \eqref{eq:GN_inequality} into \eqref{eq:Psi_pre} and letting $C^2=2+8C_1$, we obtain
			\begin{equation}
				\Vert\Psi(u)\Vert_{L^2}\leq C\frac{P}{Z}\Vert u\Vert_{L^2}\Vert u\Vert_{H^1}.
			\end{equation}
		\end{proof}
				
		Our next goal is to show that if $\Vert u\Vert_{L^2}$ is small, then so is $\Vert \Psi(u)\Vert_{L^2}$. However, Lemma \ref{lem:Psi_estimate} is not sufficient to accomplish this because even if $\Vert u\Vert_{L^2}$ is small, $\Vert u\Vert_{H^1}$ may be large. Therefore, the following Lemma shows that if $\Vert u(\cdot,t)\Vert_{L^2}$ is small for all $t$, then $\Vert u'(\cdot,t)\Vert_{L^2}$ does not exceed $\Vert u'(\cdot,0)\Vert_{L^2}$. 
		\begin{lemma}\label{lem:w_derivative_bounded}
			Suppose $P/Z<\pi^2$. Let $T,\varepsilon>0$ and let $u$ be a solution to $\partial_tu=S_Cu+\Psi(u)$ in $C^1([0,T];\tilde X^2_C)$. There exists $U^\ast>0$ such that if $\Vert u'(\cdot,0)\Vert_{L^2}<\varepsilon$ and $\Vert u(\cdot,t)\Vert_{L^2}<U^\ast$ for all $0\leq t\leq T$, then
			\begin{equation}
				\Vert u'(\cdot,t)\Vert_{L^2}\leq \varepsilon\quad \text{for all $0\leq t\leq T$.}
			\end{equation}
		\end{lemma}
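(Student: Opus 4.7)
The plan is to derive an $L^2$ energy estimate for the spatial derivative $u'$ and then close the resulting differential inequality by a Gronwall comparison. Differentiating $\tfrac{1}{2}\Vert u'(\cdot,t)\Vert_{L^2}^2$ in time and integrating by parts on $(-1/2,1/2)$, the boundary contributions vanish because $u'(\pm 1/2,t)=0$ for all $t$ (since $u(\cdot,t)\in\tilde X^2_C$), so after substituting $\partial_t u=u''-\phi''+\Psi(u)$ I obtain
\begin{equation*}
\frac{d}{dt}\tfrac{1}{2}\Vert u'\Vert_{L^2}^2
=-\Vert u''\Vert_{L^2}^2+\int_{-1/2}^{1/2}u''\phi''\,dx-\int_{-1/2}^{1/2}u''\Psi(u)\,dx.
\end{equation*}
Weighted Young inequalities applied to the two cross terms absorb half of the leading dissipation and yield
\begin{equation*}
\frac{d}{dt}\tfrac{1}{2}\Vert u'\Vert_{L^2}^2\leq-\tfrac{1}{2}\Vert u''\Vert_{L^2}^2+\Vert\phi''\Vert_{L^2}^2+\Vert\Psi(u)\Vert_{L^2}^2.
\end{equation*}

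Next I control the two forcing terms. The elliptic identity $\phi''=(\phi-Pu)/Z$ combined with the resolvent bounds of Proposition \ref{prop:elliptic_estimates} gives $\Vert\phi''\Vert_{L^2}\leq C_0(P/Z)\Vert u\Vert_{L^2}$, while Lemma \ref{lem:Psi_estimate} gives $\Vert\Psi(u)\Vert_{L^2}^2\leq C^2(P/Z)^2\Vert u\Vert_{L^2}^2\Vert u\Vert_{H^1}^2$. Since $u'$ has homogeneous Dirichlet boundary data, the optimal Poincar\'e inequality on a unit interval yields $\Vert u''\Vert_{L^2}^2\geq\pi^2\Vert u'\Vert_{L^2}^2$. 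Expanding $\Vert u\Vert_{H^1}^2=\Vert u\Vert_{L^2}^2+\Vert u'\Vert_{L^2}^2$ and using the hypothesis $\Vert u(\cdot,t)\Vert_{L^2}<U^\ast$, the differential inequality reduces to
\begin{equation*}
\frac{d}{dt}\Vert u'\Vert_{L^2}^2\leq-\bigl(\pi^2-C_2(U^\ast)^2\bigr)\Vert u'\Vert_{L^2}^2+C_1(U^\ast)^2+C_3(U^\ast)^4,
\end{equation*}
with constants $C_1,C_2,C_3$ depending only on $P$ and $Z$.

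Finally I choose $U^\ast$ in two stages: first small enough that the prefactor of $\Vert u'\Vert_{L^2}^2$ stays above $A:=\pi^2/2$, and then (possibly shrinking $U^\ast$ further in an $\varepsilon$-dependent manner) so that the source $B:=C_1(U^\ast)^2+C_3(U^\ast)^4$ satisfies $B\leq A\varepsilon^2$. A direct comparison of $f'\leq-Af+B$ with $f(t):=\Vert u'(\cdot,t)\Vert_{L^2}^2$ then gives
\begin{equation*}
f(t)\leq e^{-At}f(0)+\tfrac{B}{A}\bigl(1-e^{-At}\bigr)\leq\max\bigl(f(0),\,B/A\bigr)\leq\varepsilon^2
\end{equation*}
uniformly in $0\leq t\leq T$, which is the desired conclusion. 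The main obstacle is that the quadratic-in-$\Vert u\Vert_{H^1}$ bound supplied by Lemma \ref{lem:Psi_estimate} contains an $\Vert u'\Vert_{L^2}^2$ contribution that has to be absorbed into the Poincar\'e-generated decay; this absorption is precisely what forces $U^\ast$ to be small, and the separate requirement $B/A\leq\varepsilon^2$ is what couples $U^\ast$ to $\varepsilon$. Because the Gronwall estimate is uniform in $t$, the resulting $U^\ast$ can be chosen independent of $T$.
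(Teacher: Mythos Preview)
Your proof is correct and follows essentially the same route as the paper. The only cosmetic difference is in how the energy identity for $\Vert u'\Vert_{L^2}^2$ is obtained: the paper squares the relation $\partial_t u - u'' = -\phi'' + \Psi(u)$ and integrates (then discards the nonnegative term $\Vert\partial_t u\Vert_{L^2}^2$), whereas you test the equation against $-u''$ and integrate by parts; both use the same boundary cancellation $u'(\pm 1/2)=0$ and arrive at the identical differential inequality $\frac{d}{dt}\Vert u'\Vert_{L^2}^2 \leq -\Vert u''\Vert_{L^2}^2 + 2\Vert\phi''\Vert_{L^2}^2 + 2\Vert\Psi(u)\Vert_{L^2}^2$, after which the Poincar\'e/Gr\"onwall closure is the same.
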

		\begin{proof}
			Write the evolution equation for $u$ as
			\begin{equation}
				\partial_t  u-u''=-\phi''+\Psi(u).
			\end{equation}
			Square both sides and integrate to obtain
			\begin{align}
				\Vert -\phi''+\Psi(u)\Vert_{L^2}^2&=\int_{-1/2}^{1/2} (\partial_t  u)^2-2(\partial_t  u)u''+(u'')^2\,dx\\
				&=\Vert\partial_t  u\Vert_{L^2}^2+2\int_{-1/2}^{1/2}(\partial_t  u')u'\,dx+\Vert u''\Vert_{L^2}^2\\
				&=\Vert\partial_t  u\Vert_{L^2}^2+\frac{d}{dt}\Vert u'\Vert_{L^2}^2+\Vert u''\Vert_{L^2}^2.
			\end{align}
			Thus,
			\begin{align}
				\frac{d}{dt}\Vert u'\Vert^2_{L^2}&\leq\Vert -\phi''+\Psi(u)\Vert_{L^2}^2-\Vert u''\Vert_{L^2}^2\\
				&\leq2\Vert \phi''\Vert_{L^2}^2+2\Vert\Psi(u)\Vert_{L^2}^2-\Vert u''\Vert_{L^2}^2.
			\end{align}
			From Lemma \ref{lem:Psi_estimate}, there exists $C_1$ independent of $u$ such that $\Vert\Psi(u)\Vert_{L^2}\leq C_1\Vert u\Vert_{L^2}\Vert u\Vert_{H^1}.$ Moreover, by Proposition \ref{prop:elliptic_estimates} in Appendix A, $\Vert\phi''\Vert_{L^2}\leq 2P/Z\Vert u\Vert_{L^2}$. Since $\int_{-1/2}^{1/2}u\,dx=0$ and $u'(\pm 1/2,t)=0$, we may apply the Poincar\'e inequality to both $u$ and $u'$ with the optimal Poincar\'e constant $1/\pi$:
			\begin{equation}\label{eq:Poincare_m_prime}
				\Vert u\Vert_{L^2}\leq\frac{1}{\pi}\Vert u'\Vert_{L^2}\quad\text{and}\quad\Vert u'\Vert_{L^2}\leq\frac{1}{\pi}\Vert u''\Vert_{L^2}.
			\end{equation}
			Thus,
			\begin{align}
				\frac{d}{dt}\Vert u'\Vert^2_{L^2}&\leq 8\frac{P^2}{Z^2}\Vert u\Vert_{L^2}^2+2C_1^2\Vert u\Vert_{L^2}^2\Vert u\Vert_{H^1}^2-\pi^2\Vert u'\Vert_{L^2}^2\\
				&\leq 8\frac{P^2}{Z^2}\Vert u\Vert_{L^2}^2+2C_1^2\Vert u\Vert_{L^2}^2(\Vert u\Vert_{L^2}+\Vert u'\Vert_{L^2})^2-\pi^2\Vert u'\Vert_{L^2}^2\\
				&\leq-\left(\pi^2-4C_1^2\Vert u\Vert_{L^2}^2\right)\Vert u'\Vert_{L^2}^2+8\frac{P^2}{Z^2}\Vert w\Vert_{L^2}^2.
			\end{align}
			Let $U^\ast<\pi/(4C_1)$. Then if $\Vert u\Vert_{L^2}\leq U^\ast$ for all $0<t<T$,
			\begin{equation}
				\frac{d}{dt}\Vert u'\Vert^2_{L^2}\leq -R_1\Vert u'\Vert_{L^2}^2+R_2
			\end{equation}
			where
			\begin{equation}
				R_1=\pi^2-4C_1^2(U^\ast)^2>\frac{\pi^2}{2}\quad\text{and}\quad R_2=8\frac{P^2}{Z^2}(U^\ast)^2.
			\end{equation}
			
			Let $q(t)=\Vert u'(\cdot,t)\Vert_{L^2}^2-R_2/R_1$. Then $q$ satisfies $q'\leq -R_1q.$ By the Gr\"onwall's inequality, $q(t)\leq q(0)e^{-R_1t}.$	We conclude that if $q(0)<0$, then $q(t)<0$ for all $t>0$. Thus, if $\Vert u\Vert_{L^2}\leq U^\ast$, and if
			\begin{equation}
				\Vert u'(\cdot,0)\Vert_{L^2}<\sqrt{\frac{R_2}{R_1}},\quad \text{then}\quad
				\Vert u'(\cdot,t)\Vert_{L^2}<\sqrt{\frac{R_2}{R_1}}
			\end{equation}
			for all $t>0$. Letting $U^\ast<\varepsilon/(4\pi)$, we have
			\begin{equation}
				\sqrt{\frac{R_2}{R_1}}<4\pi U^\ast<\varepsilon,
			\end{equation}
			so the desired result holds.
		\end{proof}
		
		\begin{proof}[Proof of Theorem \ref{thm:stability_of_SS}]
			Let $u=m-1$. Then $u$ solves
			\begin{equation}\label{eq:u_eqn}
				\begin{cases}
					\partial_t  u=S_Cu+\Psi(u) & -1/2<x<1/2,\;t>0\\
					u'=0 & x=\pm 1/2,\;t>0\\
					u(\cdot,0)=m_0-1:=u_0 & t=0.
				\end{cases}
			\end{equation}
			Let $S(t)$ be the semigroup generated by $S_C$. By Theorem \ref{thm:SS_linear_stability}, there exists $\omega>0$ such that $\Vert S(t)\Vert_{L^2}\leq e^{-\omega t}$. We observe that
			\begin{align}
				\int_0^t S(t-\tau)\Psi(u(\tau,\cdot))\,d\tau&=\int_0^t S(t-\tau)(u'(\tau)-S_Cu(\tau,\cdot))\,d\tau\\
				&=\int_0^t\frac{d}{d\tau}\left(S(t-\tau)u(\tau,\cdot)\right)\,d\tau\\
				&=S(t-\tau)u(\tau,\cdot)\Big|_{0}^t\\
				&=u(\cdot,t)-S(t)u(0,\cdot)\\
				&=u(\cdot,t)-S(t)u_0.
			\end{align}
			Thus we may write
			\begin{equation}\label{eq:SS_u_integral_eqn}
				u(\cdot,t)=S(t)u_0+\int_0^t S(t-\tau)\Psi(u(\tau,\cdot))\,d\tau.
			\end{equation}
			
			Taking the $L^2$ norm of both sides, we find that
			\begin{equation}
				\Vert u(\cdot,t)\Vert_{L^2}\leq e^{-\omega t}\Vert u_0\Vert_{L^2}+\int_0^t e^{\omega(\tau-t)}\Vert \Psi(u(\tau,\cdot))\Vert_{L^2}\,d\tau.
			\end{equation}
			Lemma \ref{lem:Psi_estimate} provides as estimate for $\Psi$ in terms of a constant $C$, leading to
			\begin{equation}
				\Vert u(\cdot,t)\Vert_{L^2}\leq e^{-\sigma t}\Vert u_0\Vert_{L^2}+ C\int_0^t e^{\omega(\tau-t)}\Vert u(\tau,\cdot)\Vert_{L^2}\Vert u(\tau,\cdot)\Vert_{H^1}\,d\tau.
			\end{equation}
			
			Let $\varepsilon=\omega\pi/(2 C(1+\pi))$, and let $U^\ast$ be as in Lemma \ref{lem:w_derivative_bounded}. Suppose that $\Vert u'(0,\cdot)\Vert_{L^2}<\varepsilon$ and $\Vert u(0,\cdot)\Vert_{L^2}<U^\ast$. Let
			\begin{equation}
				W=\{t\geq 0: \Vert u(\tau,\cdot)\Vert_{L^2}\leq U^\ast\text{ for all $0\leq \tau\leq t$}\}.
			\end{equation}
			By continuity, $W$ is a closed interval and $0\in W$. Thus, either $W=[0,\infty)$ or $W$ has a positive maximum. Let $T\in W$. Then, after applying the Poincar\'e inequality and Lemma \ref{lem:w_derivative_bounded}, for any $0\leq t\leq T$,
			\begin{equation}
				\begin{split}
					\Vert u(\cdot,t)\Vert_{L^2}&\leq e^{-\omega t}\Vert u_0\Vert_{L^2}+C\int_0^t e^{\omega(\tau-t)}\Vert u(\tau,\cdot)\Vert_{L^2}\left(1+\frac{1}{\pi}\right)\Vert u'(\tau,\cdot)\Vert_{L^2}\,d\tau\\
					&\leq e^{-\omega t}\Vert u_0\Vert_{L^2}+C\int_0^t e^{\omega(\tau-t)}\Vert u(\tau,\cdot)\Vert_{L^2}\left(1+\frac{1}{\pi}\right)\varepsilon\,d\tau\\
					&\leq e^{-\omega t}\Vert u_0\Vert_{L^2}+\frac{\omega}{2}\int_0^t e^{\omega(\tau-t)}\Vert u(\tau,\cdot)\Vert_{L^2}\,d\tau.
				\end{split}
			\end{equation}
			Therefore, by Gr\"onwall's inequality,
			\begin{equation}\label{eq:SS_bound_on_u}
				\Vert u(\cdot,t)\Vert_{L^2}\leq \Vert u_0\Vert_{L^2}e^{-\omega t/2}
			\end{equation}
			for all $0\leq t\leq T$. Therefore, $\Vert u(\cdot,T)\Vert_{L^2}< U^\ast$ and so by continuity, $T\neq \max W$. Since $T\in W$ is arbitrary, we conclude that $W$ does not have a maximum so $W=[0,\infty)$ and  $\Vert u(\cdot,t)\Vert_{L^2}\leq U^\ast$ for all $t>0$. That is, \eqref{eq:SS_bound_on_u} holds for all $t\geq 0$ provided $\Vert u(0,\cdot)\Vert_{L^2}<U^\ast$ and $\Vert u'(0,\cdot)\Vert_{L^2}<\varepsilon$. Note that from the proof of Lemma \ref{lem:w_derivative_bounded}, $U^\ast<\varepsilon$. Therefore, the desired result holds.
		\end{proof}
		
	\section{Traveling Waves}
		
		By now we have seen that provided $P/Z<\pi^2$, the stationary solution $m=1$ to model  B  is asymptotically stable (up to shifts). 
		A key step in the proof of this stability result was Proposition \ref{prop:SS_eigenvalues}, which shows that for $P/Z<\pi^2$, all eigenvalues of $S_C$ are negative. 
		
		First observe that $u(x)=\cos(2\pi x)$ is an eigenvector of $S_C$ (since it is also an eigenvector of $-Z\phi''+\phi$ with periodic boundary conditions on $(-1/2,1/2)$). Its eigenvalue is $-4\pi^2+\frac{P}{Z}\frac{1}{1+\frac{1}{4\pi ^2Z}}$. If, for fixed $Z>0$,  the parameter  $P$ is large enough,  this eigenvalue is positive. This observation hints  that for some critical value $P_0>\pi^2 Z$ of $P$, the largest eigenvalue of $S_C$ will reach  zero, and for $P>P_0$, the stationary solution $m=1$ will become  unstable.
		
		In this section, we show that for any $Z>0$, there exists a number $V^\ast>0$  and a smooth function $P_T:(-V^\ast,V^\ast)\to\mathbb R$ such that if $P=P_T(V)$, then there exists a traveling wave solution to the model B with velocity $V$ and center $c=Vt$. This family of traveling wave solutions parameterized by $V$ bifurcates from the family of stationary states  at $V=0$ and $P=P_T(0)=P_0$. As we show, this bifurcation is of the type   illustrated in Figure \ref{fig:bifurcation_diagram}.
		\begin{figure}
			\centering
f			\includegraphics[width=6cm]{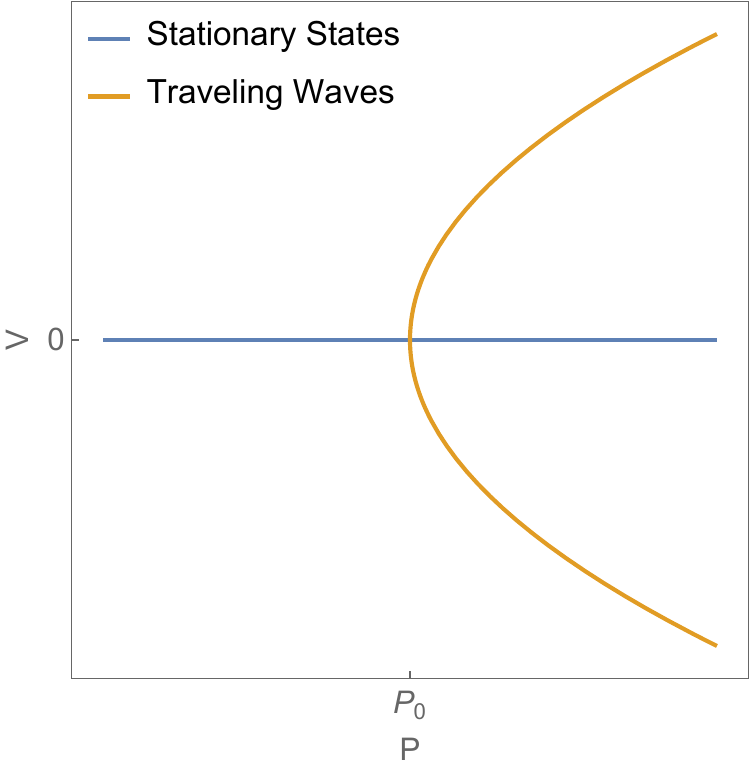}
			\caption{This diagram shows the pitchfork bifurcation from stationary states to traveling waves which is structurally the same  in our all   three models.}
			\label{fig:bifurcation_diagram}
		\end{figure}
		
	\begin{remark}
		We note that the  presence of the implied bifurcation was first discovered in   \cite{RecPutTru2013,RecPutTru2015} where  the  structure of the associated  bifurcation for both model A and model B was identified as well by using formal expansions. Here, in the framework of model B, we complement this earlier study not only  by a rigorous analysis of the existence of the  traveling wave solutions but also by   providing  a  global analysis of the corresponding bifurcation.
		\end{remark}
		
		Observe first that $m(x,t)$ is a traveling wave solution with velocity $V$ in model B if $m(x,t)=m_T(x-Vt)$ where $m_T$ satisfies
		\begin{equation}\label{eq:TW_equations}
			\begin{cases}
				m_T''+Vm_T'-(m_T\phi_T')'=0 & -1/2<x<1/2\\
				m_T'(\pm 1/2)=0 & 
			\end{cases}
			\quad\text{with}\quad
			\begin{cases}
				-Z\phi_T''+\phi_T'=P(V)m_T & -1/2<x<1/2\\
				\phi_T(1/2)=\phi_T(-1/2) & \\
				\phi_T'(1/2)=\phi_T'(-1/2)=V. &
			\end{cases}
		\end{equation} 
		We further observe that a solution to this equation is $m_T=\Lambda e^{\phi_T-V x}$ for any $\Lambda\in\mathbb R$. The value of $\Lambda=\Lambda(V)$ can be determined by the provision that $\int_{-1/2}^{1/2}m\,dx=1$, and the value of $\phi_T$ therefore satisfies
		\begin{equation}\label{eq:TW_phi}
			\begin{cases}
				-Z\phi_T''+\phi_T=P(V)\Lambda(V)e^{\phi_T(x)-Vx} & -1/2<x<1/2\\
				\phi_T(-1/2)=\phi_T(1/2) & \\
				\phi_T'(-1/2)=\phi_T'(1/2)=V. &
			\end{cases}
		\end{equation}
		Note that \eqref{eq:TW_phi} has three boundary conditions: not only must $\phi_T$ satisfy periodic boundary conditions, but also $\phi_T'(\pm 1/2)=V$. Thus, $P(V)$ is selected so that $\phi_T$ can satisfy this extra condition.
		
		Solutions to \eqref{eq:TW_phi} may be approximated asymptotically in small $V$ as shown in the following Lemma:
		\begin{lemma}\label{lem:TW_asymptotic_form}
			Let $Z>0$. Suppose that $m_T$, $\phi_T$ and $P(V)$ solve \eqref{eq:TW_equations}. In small $V$, $P(V)$ and $m_T$ have the asymptotic forms
			\begin{align}
				P(V)&=P_0+V^2 P_2+O(V^4)\label{eq:P_expansion}\\
				m_T(x)&=1+Vm_1(x)+V^2m_2(x)+O(V^3),\label{eq:m_expansion}
			\end{align}
			where
			\begin{itemize}
				\item $P_0$ solves
				\begin{equation}\label{eq:P0}
					\tanh\left(\frac{\sqrt{1-P_0}}{2\sqrt{Z}}\right)=P_0\frac{\sqrt{1-P_0}}{2\sqrt{Z}}\quad\text{or equivalently}\quad\tan\left(\frac{\sqrt{P_0-1}}{2\sqrt{Z}}\right)=P_0\frac{\sqrt{P_0-1}}{2\sqrt{Z}}.
				\end{equation}
				\item $P_2$ is given by
				\begin{equation}\label{eq:P2}
					P_2=\frac{P_0 \left(6 P_0^6-15 P_0^5-3 P_0^4 (56 Z-5)+P_0^3 (514 Z-6)-1044 P_0^2 Z+72 P_0 Z (55 Z-1)+5280 Z^2\right)}{288 (P_0-1)^4 \left(P_0^2-12 Z\right)}.
				\end{equation}
				\item $m_1$ is given by
				\begin{equation}\label{eq:m1}
					m_1(x)=\frac{x}{P_0-1}-\frac{P_0 \sqrt{\frac{P_0^3-P_0^2+4 Z}{\left(P_0-1\right) P_0^2}} \sin \left(\frac{\sqrt{P_0-1} x}{\sqrt{Z}}\right)}{2 P_0-2}
				\end{equation}
				\item $m_2$ is given by
				\begin{equation}\label{eq:m2}
					m_2(x)=A+Bx^2+(C+Dx^2)\cos\left(\frac{\sqrt{P_0-1}}{\sqrt{Z}}x\right)+Ex\sin\left(\frac{\sqrt{P_0-1}}{\sqrt{Z}}x\right)+F\cos\left(\frac{2\sqrt{P_0-1}}{\sqrt{Z}}x\right)
				\end{equation}
				with
				\begin{align}
					A&=\frac{-6 P_0 Z+P_0+24 Z-1}{24 \left(P_0-1\right){}^4}\\
					B&=\frac{12-12 P_0}{24 \left(P_0-1\right){}^4}\\
					C&=\frac{\left(P_0 (28 Z-3)+3 P_0^2-60 Z\right) \sqrt{\frac{P_0^3-P_0^2+4 Z}{Z}}}{96 \left(P_0-1\right){}^4}\\
					D&=-\frac{P_0 \sqrt{\frac{P_0^3-P_0^2+4 Z}{Z}}}{8 \left(P_0-1\right){}^3}\\
					E&=\frac{\left(4-3 P_0\right) \sqrt{Z} \sqrt{\frac{P_0^3-P_0^2+4 Z}{Z}}}{8 \left(P_0-1\right){}^{7/2}}\\
					F&=\frac{\left(3-4 P_0\right) \left(P_0^3-P_0^2+4 Z\right)}{48 \left(P_0-1\right){}^4}.
				\end{align}
			\end{itemize}
		\end{lemma}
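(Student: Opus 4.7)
The plan is to carry out a formal asymptotic expansion of the traveling wave system \eqref{eq:TW_phi} (together with the mass constraint $\int_{-1/2}^{1/2} m_T\,dx = 1$) in powers of $V$, matching order by order to determine $P_0$, $P_2$, $m_1$, and $m_2$. I would substitute $m_T = 1 + V m_1 + V^2 m_2 + O(V^3)$, $\phi_T = \Phi_0 + V\Phi_1 + V^2\Phi_2 + O(V^3)$, and $\Lambda(V) = \Lambda_0 + V\Lambda_1 + V^2\Lambda_2 + O(V^3)$, and then collect powers of $V$.

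The starting observation is a parity symmetry: the map $(m_T(x),\phi_T(x),V) \mapsto (m_T(-x),\phi_T(-x),-V)$ preserves \eqref{eq:TW_equations} with the same $P$. Hence $P(V)$ is even in $V$ (so $P_1=0$), and each coefficient has definite parity, $\Phi_j(x) = (-1)^j \Phi_j(-x)$ and analogously for $m_j$, while $\Lambda_{2j+1}=0$. This parity plays a crucial role in ruling out spurious homogeneous modes at each order. At zeroth order, the expansion is taken around the stationary branch, so $\Phi_0\equiv P_0$, $m_0\equiv 1$, and the normalization forces $\Lambda_0 = e^{-P_0}$.

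At first order, linearizing the elliptic equation gives
\begin{equation*}
-Z\Phi_1'' + (1-P_0)\Phi_1 = -P_0 x + c_1,\qquad c_1 = P_1 + P_0\Lambda_1/\Lambda_0,
\end{equation*}
together with the boundary conditions $\Phi_1(1/2)=\Phi_1(-1/2)$ and $\Phi_1'(\pm 1/2)=1$. Oddness of $\Phi_1$ forces $c_1=0$ and removes the $\cos(\alpha_0 x)$ homogeneous mode, where $\alpha_0 := \sqrt{P_0-1}/\sqrt Z$. Only the amplitude $\beta$ of the remaining contribution $\beta\sin(\alpha_0 x) + P_0 x/(P_0-1)$ is free, so the two derivative conditions at $\pm 1/2$ together with periodicity yield two equations in the single unknown $\beta$; their compatibility requires
\begin{equation*}
\tan(\alpha_0/2) = P_0\alpha_0/2,
\end{equation*}
which is precisely the bifurcation relation \eqref{eq:P0}. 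With $P_0$ so determined, $\beta$ is fixed explicitly, and $m_1 = \Phi_1 - x$ (since $\Lambda_1=0$) reduces to \eqref{eq:m1} after invoking the identity $\sec^2(\alpha_0/2)=1+(P_0\alpha_0/2)^2$.

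The second-order step is structurally identical but algebraically much heavier. Expanding $\Lambda e^{\phi_T - Vx}$ to $O(V^2)$ and substituting into $-Z\phi_T''+\phi_T=Pm_T$ yields
\begin{equation*}
-Z\Phi_2'' + (1-P_0)\Phi_2 = \tfrac{P_0}{2}(\Phi_1-x)^2 + c_2,\qquad c_2 = P_2 + P_0\Lambda_2/\Lambda_0.
\end{equation*}
The squared forcing produces constant, $x^2$, $\cos(2\alpha_0 x)$, and (resonant) $x\sin(\alpha_0 x)$ terms, so undetermined coefficients delivers a particular solution of exactly the form $A + Bx^2 + (C+Dx^2)\cos(\alpha_0 x) + Ex\sin(\alpha_0 x) + F\cos(2\alpha_0 x)$ appearing in \eqref{eq:m2}. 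The three linear conditions $\Phi_2'(\pm 1/2)=0$ and $\int m_2\,dx = 0$ then uniquely determine $P_2$; formula \eqref{eq:P2} follows by substituting the first-order data and simplifying via $\tan(\alpha_0/2)=P_0\alpha_0/2$, while $m_2 = \Phi_2 + (\Phi_1-x)^2/2 + \Lambda_2/\Lambda_0$ produces \eqref{eq:m2} with the stated coefficients. The main obstacle will not be conceptual but algebraic: collapsing the second-order solvability condition and the six coefficients $A,\dots,F$ into the rational expressions in the statement demands repeated elimination of transcendental terms in $\alpha_0$ through the bifurcation identity, and I would carry out this reduction with computer algebra. Beyond the parity observation and the standard over-determined-BVP solvability pattern at each order, no new analytical ideas are required.
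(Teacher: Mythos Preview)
Your outline is right through first order, but the second-order step contains a real gap: the constant $P_2$ is \emph{not} determined at order $V^2$. In the equation
\[
-Z\Phi_2'' + (1-P_0)\Phi_2 = \tfrac{P_0}{2}(\Phi_1-x)^2 + c_2,\qquad c_2=P_2+P_0 e^{P_0}\Lambda_2,
\]
the constant $c_2$ contributes only the additive constant $c_2/(1-P_0)$ to $\Phi_2$. Hence $\Phi_2'$ is independent of $P_2$ and $\Lambda_2$, so the condition $\Phi_2'(1/2)=0$ (note that by evenness this is one condition, not two) fixes only the even homogeneous amplitude in front of $\cos(\alpha_0 x)$. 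The mass constraint $\int m_2\,dx=0$ then yields a single linear relation between $\Lambda_2$ and $P_2$, i.e.\ $\Lambda_2=-e^{-P_0}P_2+\tilde\Lambda_2$, but does not determine either one separately. Your three conditions therefore leave $P_2$ free.

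In the paper, $P_2$ is fixed one order higher: the third-order equation for $\Phi_3$ is odd, and the two conditions $\Phi_3(1/2)=0$, $\Phi_3'(1/2)=0$ overdetermine the single odd homogeneous mode $\sin(\alpha_0 x)$. The resulting Fredholm-type solvability condition---orthogonality of the right-hand side $P_2(\Phi_1-x)+g(x)$ to $\sin(\alpha_0 x)$---is exactly what pins down $P_2$ as the ratio $-\int gU/\int fU$. This is the step you are missing; once it is added, the rest of your plan (undetermined coefficients for $\Phi_2$, then $m_2$ via $m_2=\Phi_2+\tfrac12(\Phi_1-x)^2+e^{P_0}\Lambda_2$, and computer algebra to reduce via $\tan(\alpha_0/2)=P_0\alpha_0/2$) matches the paper's approach.
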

		
		\begin{proof}
			We first introduce expansions for $\Lambda(V)$ and $\phi_T$:
			\begin{align}
				\Lambda(V)&=\Lambda_0+V^2\Lambda_2 +O(V^4),\label{eq:Lambda_expansion}\\
				\phi_T(x)&=\phi_0(x)+V\phi_1(x)+V^2\phi_2(X)+V^3\phi_3(x)+O(V^4).\label{eq:phi_expansion}
			\end{align}
			Observe that neither the expansion for $P$ \eqref{eq:P_expansion} nor the expansion for $\Lambda$ \eqref{eq:Lambda_expansion} have terms that are of odd order in $V$. This is because we expect symmetry in traveling wave solutions with respect to the sign of $V$. Therefore, $P(V)$ and $\Lambda(V)$ are even functions of $V$ and for $m_T(x)$ and $\phi_T(x)$, the transformation $V\mapsto -V$ is equivalent to $x\mapsto -x$.			
			
			Since $m_T(x)=\Lambda(V)e^{\phi_T(x)-V x}$, we have may expand the exponential to obtain $1=\Lambda_0e^{\phi_0}$,  $m_1(x)=\Lambda_0e^{\phi_0}(\phi_1-x)$ and
			\begin{equation}\label{eq:phi2_to_m2}
				m_2(x)=\Lambda _2 e^{\phi _0}+\frac{1}{2} \Lambda _0 e^{\phi _0} \left(x^2-2 x \phi _1+\phi _1^2+2 \text{$\phi $2}\right).
			\end{equation}
			We conclude that $\phi_0$ is constant and $\Lambda_0=e^{-\phi_0}$. Substituting the expansion \eqref{eq:phi_expansion} into \eqref{eq:TW_phi} and comparing terms of like order in $V$, we obtain $\phi_0=P_0$ in zero order and the following differential equations in first through third order:
			\begin{equation}\label{eq:first_order}
				\begin{cases}
					-Z\phi_1''+(1-P_0)\phi_1=-P_0x & -1/2<x<1/2\\
					\phi_1(1/2)=\phi_1(-1/2) & \\
					\phi_1'(1/2)=\phi_1'(-1/2)=1
				\end{cases}
			\end{equation}
			
			\begin{equation}\label{eq:second_order}
				\begin{cases}
					-Z\phi_2''+(1-P_0)\phi_2=P_2+P_0e^{P_0}\Lambda_2+\frac{P_0}{2}(\phi_1-x)^2 & -1/2<x<1/2\\
					\phi_2(1/2)=\phi_2(-1/2) & \\
					\phi_2'(1/2)=\phi_2'(-1/2)=0
				\end{cases}
			\end{equation}
			
			\begin{equation}\label{eq:third_order}
				\begin{cases}
					-Z\phi_3''+(1-P_0)\phi_3=\left(\phi _1-x\right) \left(P_0 \left(\Lambda _2 e^{P_0}+\phi _2\right)+P_2\right)+\frac{1}{6} P_0 \left(\phi _1-x\right){}^3 & -1/2<x<1/2\\
					\phi_2(1/2)=\phi_2(-1/2) & \\
					\phi_2'(1/2)=\phi_2'(-1/2)=0.
				\end{cases}
			\end{equation}
			
			The solution to \eqref{eq:first_order} is
			\begin{equation}\label{eq:phi1}
				\phi_1(x)=\frac{P_0x}{P_0-1}-\frac{1}{2}\frac{P_0}{P_0-1}\csc \left(\frac{\sqrt{P_0-1}}{2\sqrt{Z}}\right) \sin \left(\frac{\sqrt{P_0-1}}{2\sqrt{Z}}x\right).
			\end{equation}
			In order to satisfy the additional condition $\phi_1'(\pm 1/2)=1$, $P_0$ must solve \eqref{eq:P0}. Therefore, \eqref{eq:m1} is obtained as $m_1(x)=\phi_1(x)-x$.
			
			In \eqref{eq:second_order}, $\Lambda_2$ is determined by the condition that
			\begin{equation}
				\frac{d^2}{dV^2}\int_{-1/2}^{1/2}m_T(x)\,dx=\frac{d^2}{dV^2}\int_{-1/2}^{1/2}\Lambda(V)e^{\phi_T-Vx}\,dx=0.
			\end{equation}
			The value of $\Lambda_2$ is
			\begin{equation}
				\Lambda_2=-e^{-P_0}P_2-\frac{e^{-P_0} \left(3 P_0^2-60 Z+2\right)}{48 \left(P_0-1\right){}^2}.
			\end{equation}
			The solution to \eqref{eq:second_order} can be found using elementary methods and has the form
			\begin{equation}\label{eq:phi2}
				\phi_2(x)=P_2+a_0+a_2x^2+(b_0+b_2x^2)\cos\left(\frac{\sqrt{P_0-1}}{\sqrt{Z}}x\right)+c_1x\sin\left(\frac{\sqrt{P_0-1}}{\sqrt{Z}}x\right)+d_0\cos\left(\frac{2\sqrt{P_0-1}}{\sqrt{Z}}x\right)
			\end{equation}
			where
			\begin{align}
				a_0&=\frac{P_0^2 (1-30 Z)+P_0 (48 Z-1)}{24 \left(P_0-1\right){}^4}\\
				a_2&=-\frac{P_0}{2 \left(P_0-1\right){}^3}\\
				b_0&=\frac{\left(P_0 (28 Z-3)+3 P_0^2-60 Z\right) \sqrt{P_0^3-P_0^2+4 Z}}{96 \left(P_0-1\right){}^4 \sqrt{Z}}\\
				b_2&=-\frac{P_0 \sqrt{P_0^3-P_0^2+4 Z}}{8 \left(P_0-1\right){}^3 \sqrt{Z}}\\
				c_1&=\frac{P_0 \sqrt{P_0^3-P_0^2+4 Z}}{8 \left(P_0-1\right){}^{7/2}}\\
				d_0&=\frac{-4 P_0 Z-P_0^4+P_0^3}{48 \left(P_0-1\right){}^4}.
			\end{align}
			Note that the only dependence on $P_2$ in $\phi_2$ is the leading term---none of the other coefficients depend on $P_2$. Therefore, we write $\phi_2=P_2+\tilde\phi_2$, where $\tilde\phi_2$ is independent of $P_2$. We similarly write $\Lambda_2=-e^{-P_0}P_2+\tilde\Lambda_2$.
			
			In third order, we do not need to find an explicit solution $\phi_3$. Instead, we divide the right hand side of the differential equation in \eqref{eq:third_order} to separate terms that explicitly depend on $P_2$ form those that do not:
			\begin{equation}
				-Z\phi''_3+(1-P_0)\phi_3=P_2f(x)+g(x),
			\end{equation}
			where $f(x)=\phi_1-x$ and $g(x)=P_0 \left(\phi _1-x\right) \left(e^{P_0} \tilde\Lambda_2+\tilde{\phi}_2\right)+\frac{1}{6} P_0 \left(\phi _1-x\right){}^3$. The three boundary conditions that $\phi_3$ must satisfy (periodic boundary conditions with $\phi_3'(\pm 1/2)=0$) determine $P_2$, which we show as follows. Let $U(x)=\sin\left(x\sqrt{P_0-1}/\sqrt{Z}\right)$. Then
			\begin{align}
				\int_{-1/2}^{1/2}(P_2f(x)+g(x))U(x)\,dx&=\int_{-1/2}^{1/2}(-Z\phi_3''+(1-P_0)\phi_3)U(x)\,dx\\
				&=-Z\phi_3'U(x)\big|_{-1/2}^{1/2}+Z\phi_3U'\big|_{-1/2}^{1/2}+\int_{-1/2}^{1/2}\phi_3(-ZU''+(1-P_0)U)\,dx\\
				&=0.
			\end{align}
			Therefore, we may explicitly calculate
			\begin{equation}\label{eq:P2_calc}
				\begin{split}
					P_2&=-\frac{\int_{-1/2}^{1/2}g(x)U(x)\,dx}{\int_{-1/2}^{1/2}f(x)U(x)\,dx}\\
					&=\frac{P_0 \left(6 P_0^6-15 P_0^5-3 P_0^4 (56 Z-5)+P_0^3 (514 Z-6)-1044 P_0^2 Z+72 P_0 Z (55 Z-1)+5280 Z^2\right)}{288 (P_0-1)^4 \left(P_0^2-12 Z\right)}.
				\end{split}
			\end{equation}
			Therefore, substituting \eqref{eq:P2_calc} into \eqref{eq:phi2}, and then into \eqref{eq:phi2_to_m2}, we obtain \eqref{eq:m2}.			
		\end{proof}
		\begin{remark}
			The existence of this bifurcation, demonstrated for the 1D problem in \cite{RecPutTru2013, RecPutTru2015}, was shown for the 2D problem in \cite{RybBer2023} (see also \cite{SafRybBer2022}). The value of $P_2$ in \eqref{eq:P2} is identical to the result given in \cite{RecPutTru2015} (see Appendix D).  
		\end{remark}
		\begin{remark}
			Lemma \ref{lem:TW_asymptotic_form} gives the asymptotic form of traveling wave solutions if they exist, but it does not prove that such solutions exist. To prove existence, we have Theorem \ref{thm:bifurcation} below.
		\end{remark}
		
		A plot of the asymptotic approximation of $m_T(x)$ given by Lemma \ref{lem:TW_asymptotic_form} for several values of $V$ is given in Figure \ref{fig:TW_plot}.
		\begin{figure}
			\centering
			\includegraphics[width=3in]{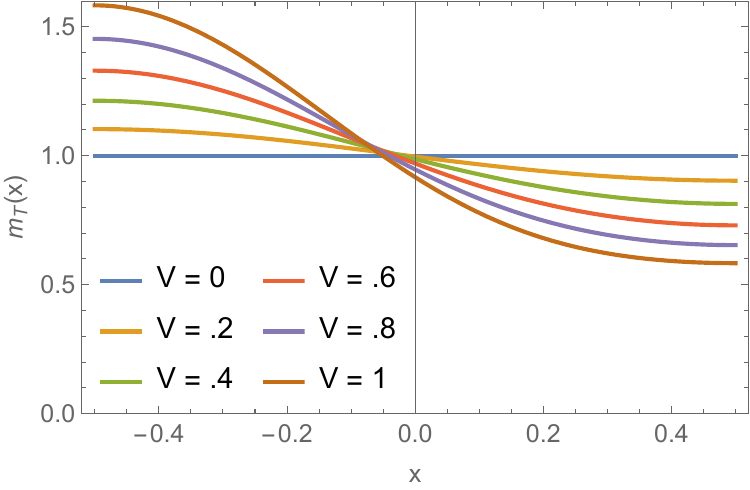}
			\caption{Traveling waves with low velocity have nearly constant myosin density ($m_T\approx 1$), but traveling waves with higher velocity are increasingly asymmetric.}
			\label{fig:TW_plot}
		\end{figure}

		
		 We can now prove  the following  result (see also   \cite{RecPutTru2015}):
		\begin{theorem}\label{thm:bifurcation}
			Let $Z>0$ and suppose $P_0$ satisfies
			\begin{equation}\label{eq:bifurcation_condition}
				\tanh\left(\frac{\sqrt{1-P_0}}{2\sqrt{Z}}\right)=P_0\frac{\sqrt{1-P_0}}{2\sqrt{Z}}.
			\end{equation}
			and $\frac{1-P_0}{Z}\neq -n^2\pi^2$ for any integer $n$. Then there exists $V^\ast>0$ and a continuous function $P_T:(-V^\ast,V^\ast)\to\mathbb R$ such that for each $V\in(-V^\ast,V^\ast)$, there exists a family traveling wave solutions $(m_T,\phi_T,Vt+c_0)$ of velocity $V$ to \eqref{eq:SL_phi}-\eqref{eq:SL_c} with $P=P_T(V)$ and $c_0\in\mathbb R$. Moreover, $P_T(0)=P_0$.
		\end{theorem}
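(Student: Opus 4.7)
The plan is to apply the Crandall-Rabinowitz (CR) bifurcation theorem, as flagged in the introduction, with $P$ as the bifurcation parameter. First I would recast the traveling wave problem \eqref{eq:TW_phi} as an abstract equation $F(P,(\eta,V))=0$ on a Banach space. Setting $\eta:=\phi_T-P$ so that the family of stationary solutions corresponds to the trivial branch $(\eta,V)=(0,0)$, and letting $\Lambda=\Lambda(\eta,V)$ be implicitly defined by the unit-mass condition $\int_{-1/2}^{1/2}\Lambda\,e^{P+\eta-Vx}\,dx=1$, I define
\begin{equation*}
F(P,\eta,V)=\Bigl(-Z\eta''+(P+\eta)-P\,\Lambda(\eta,V)\,e^{P+\eta-Vx},\;\eta'(1/2)-V\Bigr)
\end{equation*}
as a smooth map from $\mathbb{R}\times H^2_{\mathrm{per}}(-1/2,1/2)\times\mathbb{R}$ into $L^2(-1/2,1/2)\times\mathbb{R}$. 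Periodicity is encoded in the function space, while an even/odd decomposition reduces the two slope conditions $\phi_T'(\pm 1/2)=V$ to the single scalar condition $\eta'(1/2)=V$. A direct check gives $F(P,0,0)=0$ for every $P$, so $(P_0,0,0)$ lies on a curve of trivial solutions.

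Next I would verify the CR hypotheses for the linearization $L_0:=D_{(\eta,V)}F(P_0,0,0)$. Linearizing the mass constraint yields $\delta\Lambda=-e^{-P_0}\int\eta\,dx$ to leading order, so
\begin{equation*}
L_0(\eta,V)=\Bigl(-Z\eta''+(1-P_0)\eta+P_0\!\int_{-1/2}^{1/2}\!\eta\,dx+P_0\,V x,\;\eta'(1/2)-V\Bigr).
\end{equation*}
Because the principal map $(\eta,V)\mapsto(-Z\eta''+\eta,\eta'(1/2)-V)$ is an isomorphism $H^2_{\mathrm{per}}\times\mathbb{R}\to L^2\times\mathbb{R}$, and the remaining zeroth-order and finite-rank terms are compact perturbations (using $H^2\subset\subset L^2$), $L_0$ is Fredholm of index zero. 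Splitting $\eta=\eta^e+\eta^o$ into even and odd parts decouples the kernel equation: under the non-resonance assumption $(1-P_0)/Z\neq -n^2\pi^2$ for any integer $n$, the even problem $-Z(\eta^e)''+(1-P_0)\eta^e+P_0\int_{-1/2}^{1/2}\eta^e\,dx=0$ with Neumann data admits only the trivial solution, while the odd problem is precisely \eqref{eq:first_order} for $\phi_1$, whose compatibility with the extra slope condition $\phi_1'(\pm 1/2)=1$ is exactly the bifurcation identity \eqref{eq:bifurcation_condition}. Consequently $\ker L_0=\mathrm{span}\{(\phi_1,1)\}$ is one-dimensional, and by Fredholmness the cokernel is also one-dimensional.

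The remaining step is the transversality condition $D_P L_0\cdot(\phi_1,1)\notin\mathrm{range}(L_0)$. Direct differentiation gives $D_P L_0\cdot(\phi_1,1)=(x-\phi_1,0)$ (using that $\phi_1$ is odd so $\int\phi_1=0$), and whether this vector lies in $\mathrm{range}(L_0)$ is decided by pairing against a generator $v^{\ast}$ of the cokernel, which by integration by parts may be identified with an explicit odd function. The resulting pairing reduces, up to a nonzero factor, to the derivative of the bifurcation equation \eqref{eq:bifurcation_condition} at $P_0$, which does not vanish under the non-resonance hypothesis; hence transversality holds. The CR theorem then yields a smooth branch $(P(s),\eta(s),V(s))$ for $s\in(-\delta,\delta)$ with $P(0)=P_0$, $(\eta(0),V(0))=(0,0)$, and $(\dot\eta(0),\dot V(0))=(\phi_1,1)$. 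Since $\dot V(0)=1\neq 0$, the ordinary implicit function theorem inverts $s\mapsto V(s)$ to give the required continuous function $P_T(V)=P(s(V))$ with $P_T(0)=P_0$; reconstructing $\phi_T=P_T(V)+\eta(s(V))$ and $m_T=\Lambda\,e^{\phi_T-Vx}$, with $c(t)=Vt+c_0$, produces the traveling wave family. The most delicate step is verifying transversality, since it requires unwinding the mass-constraint coupling and identifying the cokernel explicitly in order to evaluate the pairing.
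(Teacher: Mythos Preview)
Your approach is correct but follows a genuinely different route from the paper's. You work in the pressure variable $\eta=\phi_T-P$ together with the scalar $V$, on the product space $H^2_{\mathrm{per}}\times\mathbb{R}$, and encode the extra slope condition $\phi_T'(1/2)=V$ as a second component of $F$. The paper instead works in the myosin variable $\mu=m_T-1$ on the Neumann/zero-mean space $X=\{\mu\in H^2:\mu'(\pm1/2)=0,\ \int\mu=0\}$, with the single operator $\mathcal F(\mu,\tau)=\mu''+\phi'(1/2)\mu'-((\mu+1)\phi')'$ and $P=P_0+\tau$. The key payoff of the paper's choice is that the linearization $D_\mu\mathcal F(0,0)$ is exactly the self-adjoint operator $S_C$: Fredholm index zero is then immediate, the range is the orthogonal complement of the kernel, and transversality reduces to the one-line computation $\langle D_{\mu\tau}\mathcal F(0,0)u_0,u_0\rangle=(1-1/P_0)\int(u_0')^2\neq0$, requiring only $P_0\neq1$. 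Your formulation buys a more geometric picture (the velocity $V$ is an explicit coordinate from the outset, so $\dot V(0)=1$ is read off directly), at the cost of a non-self-adjoint $L_0$ whose cokernel must be identified by hand.

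One point to tighten: your transversality claim that the pairing ``reduces, up to a nonzero factor, to the derivative of the bifurcation equation at $P_0$, which does not vanish under the non-resonance hypothesis'' is correct in spirit but imprecise. The identification with $\Phi'(P_0)$ (where $\Phi(P):=\phi_1'(1/2;P)-1$) is right, since differentiating $-Z\phi_1''+(1-P)\phi_1=-Px$ in $P$ gives exactly the equation for your $\eta^{o,f}$ with right-hand side $x-\phi_1$. However, ``non-resonance'' alone is not what forces $\Phi'(P_0)\neq0$; what is actually needed is $P_0\neq1$, which is the $n=0$ case of the non-resonance hypothesis. You should state and use this explicitly. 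The paper's self-adjoint route avoids this subtlety entirely, which is the main practical advantage of its formulation.
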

		The parameter $V^\ast$ in Theorem \ref{thm:bifurcation} is the (not explicitly known) largest velocity for which traveling waves must exist. That is, the bifurcation of stationary solutions to traveling waves is a strictly local result in a neighborhood of $(m,V)=(1,0)$. The main tool to prove this theorem will the be the CR theorem \cite{CraRab1971}, which we quote in Appendix B. 
		
		Essentially, the CR theorem gives conditions under which an equation of the form $\mathcal F(x,t)=0$ has two families of solutions: a \textit{trivial branch} where $x=0$ and $t$ parameterizes the family, and a \textit{nontrivial branch} where $x$ and $t$ are both parameterized by a new parameter $s$, and the two families meet at $(x,t)=(0,0)$. In Theorem \ref{thm:bifurcation}, the  trivial branch corresponds to the stationary homogeneous solution $m=1$ for any value of $P$. The nontrivial branch corresponds to the traveling wave solutions parameterized by their velocity and with activity parameter $P=P_T(V)$. The two families of solutions meet at $P=P_T(0)=P_0$ satisfying \eqref{eq:bifurcation_condition}.
		\begin{proof}[Proof of Theorem \ref{thm:bifurcation}]
			Given $Z>0$, let $P=P_0$ be such that \eqref{eq:bifurcation_condition} holds. Let
			\begin{equation}
				X=\left\{\mu\in H^2(-1/2,1/2):\mu'(-1/2)=\mu'(1/2)=0,\int_{-1/2}^{1/2}\mu(x)\,dx=0\right\}
			\end{equation}
			and
			\begin{equation}
				Y=\left\{m\in L^2(-1/2,1/2)\times\mathbb R\right\}.
			\end{equation}
			Define $\mathcal F:X\times\mathbb R\to Y$ by
			\begin{equation}\label{F}
				\mathcal F(\mu,\tau)=\mu''+\phi'(1/2)\mu'-((\mu+1)\phi')'.
			\end{equation}
			where $\phi$ is satisfies $-Z\phi''+\phi=(P_0+\tau)(1+\mu)$ with periodic boundary conditions in $(-1/2,1/2)$. Observe that 
			\begin{equation}
				m(x,t)=\mu(x-\phi'(1/2)t,t)+1
			\end{equation}
			is a traveling wave solution to \eqref{eq:1D_m} if and only if $F(\mu,\tau)=0$. If $\phi'(1/2)=0$, then $m$ is a stationary solution, i.e., a traveling wave with velocity $0$.
			
			We will show that $\mathcal F$ satisfies the properties  required for the validity of the CR theorem, see Appendix B.
			\begin{itemize}
				\item It is clear that $\mathcal F(0,\tau)=0$ for all $\tau$.
				\item It is also clear that $\mathcal F$ is twice continuously differentiable.
				\item The linearization of $\mathcal F$ in $\mu$ at $(\mu,\tau)=(0,0)$ is
				\begin{equation}\label{eq:bifurcation_linearization_null_space}
					0=D_{\mu}\mathcal F(0,0)u=u''-\psi''
				\end{equation}
				where $\psi$ satisfies $-Z\psi''+\psi=P_0u$ with periodic boundary conditions on $(-1/2,1/2)$. Note that this is the operator $S_C$ in \eqref{eq:SL_linearization}. To show that the third hypothesis of the CR theorem is satisfied, we need to show two things:
				\begin{enumerate}
					\item[(i)] There exists a unique (up to multiplicative constant) nonzero solution $u_0\in X$ to $D_\mu\mathcal F(0,0)u_0=0$, and
					\item[(ii)] there exists a co-dimension one subspace $X'$ of $X$ such that if $w\in X'$, then there exists a solution $u\in X$ to $D_\mu\mathcal F(0,0)u=w$.
				\end{enumerate}
				
				First we show (i). Consider the function
				\begin{equation}
					u_0(x)=\frac{\sqrt{Z} \text{sech}\left(\frac{\sqrt{1-P_0}}{2 \sqrt{Z}}\right) \sinh \left(\frac{\sqrt{1-P_0} x}{\sqrt{Z}}\right)}{(1-P_0)^{3/2}}+\frac{x}{P_0-1}.
				\end{equation}
				Observe that $u_0'(\pm 1/2)=0$ and $\int_{-1/2}^{1/2}u_0(x)\,dx=0$, so $u_0\in X$. Moreover, one may check that provided $P_0$ satisfies \eqref{eq:bifurcation_condition}, then $D_{\mu}\mathcal F(0,0)u_0=0$. Now suppose that $u_1$ and $u_2$ are both nonzero solutions to \eqref{eq:bifurcation_linearization_null_space}. Let $\psi_i$ solve $-Z\psi_i''+\psi_i=P_0u_i$ with periodic boundary conditions on $(-1/2,1/2)$ for $i=1,2$. Observe that for each $i$, since the second derivatives of $u_i$ and $\psi_i$ are equal, we have
				\begin{equation}
					u_i-\psi_i=\alpha_i x-\beta_i
				\end{equation}
				We also observe that 
				\begin{equation}
					\int_{-1/2}^{1/2}\psi_i\,dx=P_0\int_{-1/2}^{1/2}u_i\,dx+Z\int_{-1/2}^{1/2}\psi_i''\,dx=0.
				\end{equation}
				Therefore, $\beta_1=\beta_2=0$. Now, suppose for some $i$, $\psi_i'(\pm 1/2)=0$. Then $\alpha_i=0$ and $u_i=\psi_i$. Thus,
				$u_i$ satisfies $-Zu_i''+u_i=P_0u_i$ with periodic boundary conditions. Thus, $u_i$ is an eigenvector of the second derivative operator with eigenvalue $(1-P_0)/Z$. The eigenvalues of the second derivative operator on $X$ are $-n^2\pi^2$ for positive integers $n$. By assumption, $(1-P_0)/Z\neq -n^2\pi^2$, so this is a contradiction. Therefore, $\psi_i'(\pm 1/2)\neq 0$, and we may assume without loss of generality that the $u_i$ are scaled such that $\psi_i'(\pm 1/2)=1$ for $i=1,2$, so $\alpha_i=-1$. Then $u_i=\psi_i-x$. Let $w=u_1-u_2$. Then
				\begin{equation}
					w=(\psi_1-x)-(\psi_2-x)=\psi_1-\psi_2.
				\end{equation}
				Thus, $w$ satisfies $-Zw''+w=P_0w$ with periodic boundary conditions on $(-1/2,1/2)$. Once again, since $(1-P_0)/Z\neq -n^2\pi^2$, the only solution is $w=0$. We conclude that $u_1=u_2$ and $u_0$ is unique up to a multiplicative constant.
				
				Now we show (ii). Observe that since $\psi$ satisfies $\psi''=(P_0u-\psi)/Z$, we may write
				\begin{equation}
					D_m\mathcal F(0,0)u=u''-\frac{P_0}{Z}u+\frac{1}{Z}\psi.
				\end{equation}
				We may abstract this operator as $D_m\mathcal F(0,0)=B+K$ where $Bu=u''-(P_0/Z)u$ and $K:u\mapsto \psi$. We make two observations. First, $K$ is a bounded operator. This follows from standard elliptic estimates and is proved in Proposition \ref{prop:elliptic_estimates} in in Appendix A. Second, the operator $B$ is invertible on $X$ and its inverse $B^{-1}$ is compact. Therefore, we may write
				\begin{equation}
					I-B^{-1}D_\mu\mathcal F(0,0)=I-B^{-1}(B+K)=-B^{-1}K
				\end{equation}
				and
				\begin{equation}
					I-D_\mu\mathcal F(0,0)B^{-1}=I-(B+K)B^{-1}=-KB^{-1}.
				\end{equation}
				Since $B^{-1}$ is compact, so are $B^{-1}K$ and $KB^{-1}$. We conclude that $D_\mu\mathcal F(0,0)$ is a Fredholm operator. We recall that the index of a Fredholm operator is the difference between the dimension of its kernel and the codimension of its range. We also recall that the index of a self-adjoint operator is zero. Since $B$ and $K$ are both self-adjoint over $L^2([-1/2,1/2])$, so is $D_\mu\mathcal F(0,0)$. Therefore, the codimension of the range of $D_\mu \mathcal F(0,0)$ is equal to the dimension of the kernel, which we have just proved is $1$.
				
				\item Finally, we must show that $D_{\mu s}\mathcal F(0,0)u_0$ is not in the range of the operator $D_{\mu}\mathcal F(0,0)$. Observe that since $D_\mu \mathcal F(0,0)$ is self-adjoint, its image is orthogonal to its kernel. That is, for any $u\in X$, $\langle D_\mu\mathcal F(0,0)u,u_0\rangle_{L^2}=0$. It is therefore sufficient to show that $\langle D_{\mu s} \mathcal F(0,0)u_0,u_0\rangle_{L^2}\neq0$. The mixed second derivative is
				\begin{equation}
					D_{\mu s} \mathcal F(0,0)u=u''-\tilde\psi''
				\end{equation}
				where $\tilde\psi$ satisfies $-Z\tilde\psi''+\tilde\psi=u$. Therefore, $\tilde\psi=\psi/P_0$. We conclude that
				\begin{align*}
					\langle D_{\mu s}\mathcal F(0,0)u_0,u_0\rangle_{L^2}=\int_{-1/2}^{1/2}\left(u_0''-\frac{\psi_0''}{P_0}\right)u_0\,dx=\int_{-1/2}^{1/2}\left(1-\frac{1}{P_0}\right)u_0u_0''\,dx=\int_{-1/2}^{1/2}\left(1-\frac{1}{P_0}\right)(u_0')^2\,dx.
				\end{align*}
				Since $P_0\neq 1$, $\langle D_{\mu s}\mathcal F(0,0)u_0,u_0\rangle_{L^2}\neq 0$.
			\end{itemize}
			Since we now checked that all the hypotheses of the CR theorem hold  in a neighborhood of $(\mu,\tau)=(0,0)$, the only solutions to $\mathcal F(\mu,\tau)=0$ are $\mu=0$ plus a smooth family of solutions $(\mu(s),\tau(s))$ parameterized by $s$ in some small interval $(-s^\ast,s^\ast)$ with $\mu(s)\not\equiv 0$. Moreover, these two families of solutions meet at $(0,0)$. Indeed, let $m_T(s)=1+\mu(s)$ and $P_T(s)=P_0+\tau(s)$. Since all solutions to $\mathcal F=0$ are traveling waves with some velocity (or stationary solutions if the velocity is zero), it only remains to show that $m_T(s)$ and $P_T(s)$ may be reparameterized (at least locally near $s=0$) by velocity. Let $V(s)$ be the velocity of $m_T(s)$. It is sufficient to show that $V'(0)\neq 0$. Let $\phi_T(s)$ satisfy	$-Z\phi_T''+\phi_T=P_T(s)m_T(s)$ with periodic boundary conditions on $(-1/2,1/2)$. Then $V(s)=\partial_x\phi_T(s)|_{x=1/2}$. Therefore, $V'(0)$ is $\psi'(1/2)$ where $\psi$ solves
			\begin{equation}
				-Z\psi''+\psi=\frac{\partial}{\partial s}P_T(s)m_T(s)\Big|_{s=0}=\frac{\partial P_T}{\partial s}(0)m_T(0)+P_T(0)\frac{\partial m_T}{\partial s}(0)=\frac{\partial P_T}{\partial s}(0)+P_0\frac{\partial \mu}{\partial s}(0),
			\end{equation}
			with periodic boundary conditions. Since $\mathcal F$ is twice differentiable, $\mu(s)$ is also continuously differentiable and $\mu'(0)$ spans the null space of $\mathcal F_\mu(0,0)$. Without loss of generality, we may assume that $\mu(s)$ is parameterized such that
			\begin{equation}
				\frac{d\mu}{ds}(0)=\mu_0.
			\end{equation}
			Let $\psi_0$ solve $-Z\psi_0''+\psi_0=P_0\mu_0$ with periodic boundary conditions on $(-1/2,1/2)$. Then $\psi_0$ and $\psi$ differ by a constant, so $V'(0)=\psi_0'(1/2)$. We may explicitly calculate $\psi_0'(1/2)=1$, so $V'(0)\neq 0$. Therefore, we may reparameterize $m_T$ and $P_T$ by $V$ for $V$ in some small interval $(-V^\ast,V^\ast)$.
		\end{proof}
		
		\begin{remark}\label{rem:smooth_TWs}
			All the analysis in the proof of Theorem \ref{thm:bifurcation} can be done if $X$ is replaced with a more restrictive domain, such as a set of $C^r$ functions. Therefore, we conclude that the traveling wave solutions $m_T$, $\phi_T$ are infinitely differentiable and depend smoothly on $V$ in the $C^r$-norm for any $r$.
		\end{remark}
		Given $Z>0$, the condition \eqref{eq:bifurcation_condition} satisfied by $P_0$ has (potentially) infinitely many solutions. Therefore, Theorem \ref{thm:bifurcation} proves the existence of not just one, but infinitely many families of traveling wave solutions, each bifurcating from the stationary solution for a different solution $P$ to \eqref{eq:bifurcation_condition}. In  Section \ref{sec:stationary_solution_stability}, we observed that for $P/Z<\pi^2$, the eigenvalues of the linearization $S_C$ of model $C$ about the stationary solution are all negative. In the proof of Theorem \ref{thm:bifurcation}, we observe that if $P$ satisfies \eqref{eq:bifurcation_condition}, $S_C$ has a zero eigenvalue. We conclude that as $P/Z$ increases from $\pi^2$, each solution of \eqref{eq:bifurcation_condition} corresponds to one of the eigenvalues of $S_C$ becoming positive. Therefore, we conjecture that for all families of traveling waves except those bifurcating from the smallest solution of \eqref{eq:bifurcation_condition}, the linearization of model C about these traveling waves has some positive eigenvalues, and therefore these traveling waves are unstable. The only traveling wave solutions that may be stable are those bifurcating from the smallest solution to \eqref{eq:bifurcation_condition}. Therefore, when using the notation $P_0$, we refer to this value. The following Lemma shows the existence of this smallest solution and provides the illuminating estimate that, for large $Z$, $P_0/Z\geq\pi^2$ with equality in the limit $Z\to\infty$.
		\begin{lemma}\label{lem:P0_asymptotic_form}
			Suppose that $P_0=P_0(Z)$ is the smallest positive solution to
			\begin{equation}\label{eq:bifurcation_condition_2}
				\tanh\left(\frac{\sqrt{1-P_0}}{2\sqrt{Z}}\right)=\frac{P_0\sqrt{1-P_0}}{2\sqrt{Z}}
			\end{equation}
			besides $P_0=1$ whenever such a solution exists. Then $P_0(Z)$ exists for all $Z>0$ except $Z=1/12$ and in large $Z$, $P_0(Z)$ expands as
			\begin{equation}
				P_0(Z)=\pi^2Z+1-\frac{8}{\pi^2}+O(1/Z).
			\end{equation}
		\end{lemma}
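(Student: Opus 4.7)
I would substitute $\alpha = \sqrt{P_0 - 1}/(2\sqrt{Z})$, so that \eqref{eq:bifurcation_condition_2} (in its $\tan$-form) becomes $\tan\alpha = \alpha(1 + 4Z\alpha^2)$ on the branch $P_0 > 1$. Setting $f(\alpha) = \tan\alpha - \alpha(1 + 4Z\alpha^2)$, Taylor expansion near the origin gives $f(\alpha) = (\tfrac13 - 4Z)\alpha^3 + O(\alpha^5)$, so $f < 0$ there for $Z > 1/12$, while $f(\alpha) \to +\infty$ as $\alpha \uparrow \pi/2$. An intermediate value argument produces the smallest positive root $\alpha(Z) \in (0, \pi/2)$, which covers the large-$Z$ regime of interest. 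Moreover, if $\alpha(Z)$ stayed bounded away from $\pi/2$ along a sequence $Z_k \to \infty$, then $\tan(\alpha(Z_k))$ would remain bounded while $\alpha(Z_k)(1 + 4Z_k\alpha(Z_k)^2)$ grows linearly in $Z_k$, contradicting $f = 0$; hence $\alpha(Z) \to \pi/2$ as $Z \to \infty$.

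To extract the expansion, I would set $\delta = \pi/2 - \alpha \to 0^+$, rewrite the equation as $\cot\delta = (\pi/2 - \delta)(1 + 4Z(\pi/2-\delta)^2)$, and multiply through by $\delta$ to absorb the pole of cotangent. The resulting
\begin{equation*}
H(\delta, Z) \;:=\; \delta\cot\delta \;-\; \delta(\pi/2 - \delta)\bigl(1 + 4Z(\pi/2-\delta)^2\bigr)
\end{equation*}
is smooth in $\delta$ near $0$. Rescaling $\delta = s/Z$ and using $\delta\cot\delta = 1 - \delta^2/3 + O(\delta^4)$ together with a direct expansion of the second term in powers of $1/Z$ yields
\begin{equation*}
H(s/Z, Z) \;=\; 1 - \tfrac{\pi^3 s}{2} + O(1/Z),
\end{equation*}
whose limiting equation $1 = \pi^3 s/2$ has the simple, transverse root $s_0 = 2/\pi^3$. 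The implicit function theorem applied to $(s, 1/Z) \mapsto H(s/Z, Z)$ at $(s_0, 0)$ then supplies a smooth branch $s(Z) = 2/\pi^3 + O(1/Z)$, hence $\delta(Z) = 2/(\pi^3 Z) + O(1/Z^2)$. Substituting into $P_0 = 1 + 4Z(\pi/2 - \delta)^2 = 1 + \pi^2 Z - 4\pi Z\delta + 4Z\delta^2$ gives $P_0 = \pi^2 Z + 1 - 8/\pi^2 + O(1/Z)$, as claimed.

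The main obstacle is the non-uniformity of the transcendental equation as $Z \to \infty$: in the raw $\alpha$-coordinates both $\tan\alpha$ and its derivative blow up at the target root, so no direct implicit function theorem applies. The twin maneuvers of (i) rescaling $\delta = s/Z$ and (ii) regularizing by multiplying by $\delta$ are what recast the problem as an implicit equation with a simple root and bounded derivatives in the limit, after which the $O(1/Z)$ error bound is automatic. A secondary point to verify is that the $(0,\pi/2)$-root found above is indeed the \emph{smallest} positive root of \eqref{eq:bifurcation_condition_2} other than $P_0 = 1$; for large $Z$ this follows from the sign of $f$ near $\alpha = 0$ combined with the monotonicity properties of $\tan\alpha$ and $\alpha(1 + 4Z\alpha^2)$ on $(0, \pi/2)$.
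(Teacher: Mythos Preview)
Your substitution $\alpha = \sqrt{P_0-1}/(2\sqrt{Z})$ is exactly the paper's $w$, and your intermediate-value argument for a root in $(0,\pi/2)$ when $Z>1/12$ mirrors the paper's claim that $k_2(w) = (\tan w - w)/(4w^3)$ rises monotonically from $1/12$ to $\infty$ on that interval. For the asymptotic expansion, the paper simply posits a formal ansatz $w = \pi/2 + w_1/Z + w_2/Z^2 + \cdots$, substitutes, and matches coefficients to obtain $w = \pi/2 - 2/(\pi^3 Z) + O(1/Z^2)$, then recovers $P_0 = 1 + 4Zw^2$. Your route via $\delta = \pi/2 - \alpha$, the regularization $\delta\cot\delta$, the rescaling $\delta = s/Z$, and the implicit function theorem is a genuinely cleaner justification of the $O(1/Z)$ remainder than the paper's formal matching; it buys you an honest error bound rather than an asserted one.

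There is one real gap: the lemma asserts existence of $P_0(Z)$ for \emph{all} $Z>0$ except $Z=1/12$, but your argument only treats the branch $P_0>1$ and hence only $Z>1/12$. The paper handles $0<Z<1/12$ separately by the real substitution $v = \sqrt{1-P_0}/(2\sqrt{Z})$, rewriting the bifurcation condition as $k_1(v) := (v - \tanh v)/(4v^3) = Z$ and observing that $k_1$ decreases from $1/12$ to $0$ on $(0,\infty)$, which produces a unique $P_0(Z)\in(0,1)$. Your proof as written omits this case entirely. Relatedly, to conclude that the $(0,\pi/2)$-root you find is truly the \emph{smallest} positive solution (other than $P_0=1$) for $Z>1/12$, you should note that the $P_0<1$ branch has no solution in that regime because $k_1<1/12$ there; your closing remark about ``monotonicity properties'' on $(0,\pi/2)$ does not by itself rule out a smaller root on the other branch.
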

		\begin{proof}
			First suppose $0<Z<1/12$. Let $v=\sqrt{1-P_0}/(2\sqrt{Z})$. Then $v$ and $Z$ satisfy
			\begin{equation}\label{eq:k1}
				k_1(z):=\frac{v-\tanh(v)}{4v^3}=Z.
			\end{equation}
			It is easy to show that $k_1$ is continuous on $(0,\infty)$, $\lim_{v\to 0^+}k_1(v)=1/12$, $\lim_{v\to\infty}k_1(v)=0$, and $k_1$ is monotonically decreasing. Thus, for any $Z\in (0,1/12)$, there exists a unique $v\in (0,\infty)$ satisfying \eqref{eq:k1}. Therefore, $P_0(Z)=\tanh(v)/v\in(0,1)$ is uniquely determined.
			
			Now suppose $Z>1/12$. Write \eqref{eq:bifurcation_condition_2} as
			\begin{equation}
				\tan\left(\frac{\sqrt{P_0-1}}{2\sqrt{Z}}\right)=\frac{P_0\sqrt{P_0-1}}{2\sqrt{Z}}
			\end{equation}
			and let $w=\sqrt{P_0-1}/(2\sqrt{Z})$. Then $w$ and $Z$ satisfy
			\begin{equation}\label{eq:k2}
				k_2(w)=\frac{\tan(w)-w}{4w^3}=Z.
			\end{equation}
			Similarly to $k_1$, it is easy to show that $k_2$ is continuous on $(0,\pi/2)$, $\lim_{w\to 0^+}k_2(w)=1/12$, $\lim_{w\to\pi/2^-}k_2(w)=\infty$, and $k_2$ is monotonically increasing on $(0,\pi/2)$. Thus, for any $Z\in (1/12,\infty)$, there exists a unique $w\in(0,\pi/2)$ satisfying \eqref{eq:k2}. Therefore, $P_0(Z)=\tan(w)/w\in (1,\infty)$. Thus, for all positive $Z$ other than $Z=1/12$, \eqref{eq:bifurcation_condition_2} has a smallest positive solution other than $1$. It should be noted that using a similar line of reasoning, we may show that \eqref{eq:k2} has a unique solution in each interval of the form $(n\pi/2,(n+2)\pi/2)$ for $n>0$ odd. These correspond to the other (larger than $P_0$) solutions to \eqref{eq:bifurcation_condition} referenced above.
			
			As $Z\to\infty$ the corresponding solution $w$ to $k_2(w)=Z$ approaches $\pi/2$. Therefore, we expand $w$ in large $Z$ as $w(Z)=\pi/2+w_1/Z+w_2/Z^2+O(1/Z^3)$. We expand \eqref{eq:k2} in large $Z$ and compare terms of like order in $Z$ to obtain $w=\pi/2-2/(Z\pi^3)+O(1/Z^2)$. Finally, using $P_0=1+4Zw^2$, we have
			\begin{equation}
				P_0=\pi^2Z+\left(1-\frac{8}{\pi^2}\right)+O(1/Z).
			\end{equation}
		\end{proof}

	\section{Stability of Traveling Waves}
	
	In this section we study the nonlinear stability of traveling wave solutions to Model B. As shown in Theorem \ref{thm:bifurcation}, traveling wave solutions of velocity $V$ sufficiently small exist provided $P$ has the prescribed value $P_T(V)$. Such a traveling wave solution has the form $m(x,t)=m_T(x-Vt)$ with $c=c_0+Vt$ where $m_T$ is a stationary solution to model $C$. For ease of analysis we will study the stability of these solutions in the framework of model C which, as described in Section \ref{sec:stationary_solution_stability}, implies  stability ``up to shifts" of traveling wave solutions to model B.
	
	As in Section \ref{sec:stationary_solution_stability}, we describe model C by the dynamical system $\partial_t m=F_C(m)$ with $F_C$ given by \eqref{eq:model_C}. The linearization about $m_T$ about traveling waves is:
	\begin{equation}\label{eq:SL_TW_linearization}
		T_Cu:=DF_C(m_T)=u''+\phi'(1/2)m_T'+V	u'-(m_T\phi')'-(u\phi_T')',
	\end{equation}
	where $\phi$ and $\phi_T$ satisfy respectively
	\begin{equation}\label{eq:linearized_phi_and_phi_T}
		\begin{cases}
			-Z\phi''+\phi=Pu & -1/2<x<1/2\\\phi(1/2)=\phi(-1/2)\\\phi_x(-1/2)=\phi_x(1/2)
		\end{cases}\quad\text{and}\quad\begin{cases}
		-Z\phi_T''+\phi_T=Pm_T & -1/2<x<1/2\\ \phi_T(1/2)=\phi_T(-1/2)\\\phi_{T,x}(-1/2)=\phi_{T,x}(1/2),
		\end{cases}
	\end{equation}
	and $u\in\tilde X^2_C:=\left\{m\in H^2(-1/2,1/2):m_x(\pm 1/2)=0,\,\int_{-1/2}^{1/2}u\,dx=0\right\}$. The coefficient $V$ appears because the velocity of the traveling wave is $V=\phi_T'(1/2)$.
	
	Also of interest is the nonlinear part of $F_C$ near $m=m_T$. However, due to the quadratic nature of the nonlinearities in $F_C$, this nonlinear part has the exact same form as the nonlinear part of $F_C$ about $m=1$:
	\begin{equation}
		\Psi(u)=F_C(m_T+u)-T_C u=\phi'(1/2)u'-(u\phi')',
	\end{equation}
	with $\phi$ given in \eqref{eq:linearized_phi_and_phi_T}. Using the nonlinear part $\Psi$, and letting $u=m-m_T$, we may rewrite the evolution equation \eqref{eq:model_C} as
	\begin{equation}
		\partial_t u=T_Cu+\Psi(u).
	\end{equation}
	
	
	Similar to Section \ref{sec:stationary_solution_stability}, our analysis in this section is focused on proving two key results:
	\begin{enumerate}
		\item $0$ is an asymptotically stable equilibrium of the linearized problem $u_t=T_Cu$, and
		\item $m_T$ is an asymptotically stable equilibrium of the nonlinear problem $m_t=F_C(m)$.
	\end{enumerate}
	In this section however, a new challenge arises: the operator $T_C$ is non self-adjoint, meaning that the spectral theorem used in the proof of linear stability in Theorem \ref{thm:SS_linear_stability} no longer applies.

	Indeed, while, as we have already mentioned in the Introduction,  a self-adjoint operator with compact inverse has a basis of eigenvectors, no such basis is guaranteed if the operator is non self-adjoint operator, meaning that there may be a portion of the domain of the operator   hidden from the eigenvectors. Since the action of the operator on this ``dark'' space cannot be determined from the eigenvectors, it is not sufficient merely to show that all the eigenvalues of the operator have negative real part. Instead, we rely on the GPG theorem \cite{EngNagBre2000}, which we quote for convenience in Appendix C. 
	
	The GPG theorem  overcomes the problem  with invisibility of a part of the domain by considering not just eigenvalues, but the entire  spectrum  of the operator. The spectrum of a linear operator $L$ is the set of all $\lambda\in\mathbb C$ so that the operator $\lambda I-L$ does not have a bounded inverse. Note that if $\lambda I-L$ is not invertible because it is not injective (one-to-one), then $\lambda$ is an eigenvalue of $L$. 
	
	We recall that if $L$ is a finite dimensional linear operator (a matrix), then the rank-nullity theorem applies and $\lambda I-L$ is invertible if and only if it is injective. In the infinite dimensional case, however, a linear operator may be injective but not surjective, and thus not invertible. Even if $\lambda I-L$ is invertible, its inverse may not bounded. Thus, the spectrum of $L$ may consist of more than just eigenvalues. 
	
	We also recall that if $\lambda I-L$ does have a bounded inverse, $(\lambda I-L)^{-1}$ is called a  resolvent  operator of $L$, and the set of $\lambda$ such that the resolvent exists (that is the complement of the spectrum) is called the  resolvent set. The solution $x(t)$ to the linear system $x_t=Lx$ can be written in terms of the resolvent via a line integral in the complex plane as an inverse Fourier Transform:
	\begin{equation}\label{eq:resolvent_formula}
		x(t)=\lim_{s\to\infty}\frac{1}{2\pi i}\int_{w-is}^{w+is}e^{\lambda t}(\lambda I-L)^{-1}x(0)\,d\lambda
	\end{equation}
	for $w\in\mathbb R$ sufficiently large \cite{EngNagBre2000}. Furthermore, if $x(0)$ can be written $x(0)=\sum_{n=1}^\infty c_n x_n$ for eigenvectors $x_n$ of $L$ with eigenvalues $\lambda_n$, then if $w>\sup_n\text{Re}\,\lambda_n$,
	\begin{align}
		x(t)&=\lim_{s\to\infty}\frac{1}{2\pi i}\int_{w-is}^{w+is}e^{\lambda t}(\lambda I-L)^{-1}\sum_{n=1}^\infty c_nx_n\,d\lambda\\
		&=\sum_{n=1}^\infty c_nx_n\lim_{s\to\infty}\frac{1}{2\pi i}\int_{w-is}^{w+is}\frac{e^{\lambda t}}{\lambda-\lambda_n}\,d\lambda\\
		&=\sum_{n=1}^\infty c_nx_n\frac{e^{wt}}{2\pi }\int_{-\infty}^\infty\frac{e^{ist}}{w+is-\lambda_n}\,ds\\
		&=\sum_{n=1}^\infty c_nx_n\frac{e^{wt}}{2\pi}\left(2\pi e^{(\lambda_n-w)t}\right)\\
		&=\sum_{n=1}^\infty c_ne^{\lambda_nt}x_n.
	\end{align}
The crucial observation is that  \eqref{eq:resolvent_formula} holds even if $x(0)$ cannot be written as a sum of eigenvectors (i.e., if the eigenvectors of $L$ do not span the domain of $L$) provided the resolvent $((w+is)I-L)^{-1}$ exists for all $s\in\mathbb R$ and $w$ is sufficiently large. 

The GRG theorem (formulated in Appendix C) provides conditions on the resolvent and spectrum of $L$ such that, via \eqref{eq:resolvent_formula}, all solutions $x(t)$ converge to $0$ exponentially fast. Then, since the solutions to our  linearized problem $u_t=T_Cu$ are $u(t)=S(t)u(0)$,  we can conclude $\lim_{t\to\infty}\Vert u(t)\Vert\leq\lim_{t\to\infty}e^{-\sigma t}\Vert u(0)\Vert=0$ whenever  the conditions of the GRG theorem hold. Then   $0$ is asymptotically stable in the linearized system. 

In view of the above, to establish linear stability, it remains to be shown that each of the three conditions of the GPG theorem  hold for the operator $T_C$. Those are identified  in our   Appendix C as conditions (i),(ii),(iii) and are also spelled out explicitly below. We will begin with proving condition (ii), then condition (i), and finally condition (iii). Before proceeding with these steps,   we first show that $T_C$ is non self-adjoint.
		
	\begin{theorem}
		There exists $Z^\ast$, $V^\ast>0$ such that if $Z>Z^\ast$ and $0<|V|<V^\ast$, the operator $T_C$ is non self-adjoint.
	\end{theorem}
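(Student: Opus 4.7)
The plan is to compute the adjoint commutator $H(u_1,u_2) = \langle T_C u_1,u_2\rangle_{L^2} - \langle u_1, T_C u_2\rangle_{L^2}$ (Definition \ref{def:adjoint_commutator}) on a well-chosen pair of test functions in $\tilde X_C^2$ and to show it is non-zero when $|V|$ is small and $Z$ is large. Since at $V=0$ we have $T_C = S_C$, which is self-adjoint by Lemma \ref{lem:SS_self-adjoint}, the $V=0$ contribution to $H$ automatically vanishes, so the whole argument reduces to exhibiting a non-vanishing first-order-in-$V$ correction. Using the expansions from Lemma \ref{lem:TW_asymptotic_form}, namely $m_T = 1+Vm_1+O(V^2)$, $\phi_T = P_0+V\phi_1+O(V^2)$, and $P(V) = P_0+O(V^2)$ (here the absence of a linear term in $V$ is crucial), one Taylor-expands \eqref{eq:SL_TW_linearization} to write $T_C = S_C + V L_1 + O(V^2)$, where
\begin{equation*}
L_1 u \;=\; u' + m_1'(x)\,\phi[u]'(1/2) - (m_1\phi[u]')' - (u\phi_1')',
\end{equation*}
and $\phi[u]$ solves the elliptic problem in \eqref{eq:linearized_phi_and_phi_T} with $P = P_0$. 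Then $H(u_1,u_2) = V\,h_1(u_1,u_2) + O(V^2)$ with $h_1(u_1,u_2) := \langle L_1u_1,u_2\rangle - \langle u_1,L_1u_2\rangle$, and the task becomes to show $h_1 \neq 0$ for some $u_1,u_2$.

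The next step is to repackage $h_1$ through integration by parts. Combining the advection term $u'$ with $-(u\phi_1')'$, the boundary contributions from the two pieces cancel (using that $\phi_1'$ is periodic and $u'_j(\pm 1/2)=0$), leaving the clean bulk expression $\int_{-1/2}^{1/2}(1-\phi_1')(u_1'u_2 - u_1u_2')\,dx$. The remaining pieces $m_1'(x)\phi[u]'(1/2) - (m_1\phi[u]')'$ are then evaluated directly on test functions. A convenient explicit choice is $u_1(x) = \sin(\pi x)$ and $u_2(x) = \cos(2\pi x)$, both of which lie in $\tilde X_C^2$ (zero mean and Neumann at $\pm 1/2$).

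To make the final algebra tractable, I would pass to the large-$Z$ limit. Lemma \ref{lem:P0_asymptotic_form} gives $P_0/Z \to \pi^2$, and the formulas for $m_1,\phi_1$ in Lemma \ref{lem:TW_asymptotic_form} simplify to $m_1 \to -\tfrac12\sin(\pi x)$ and $\phi_1 \to x - \tfrac12\sin(\pi x)$. The elliptic problem defining $\phi[u]$ degenerates to $-\phi''=\pi^2 u$ with periodic boundary conditions and zero mean, yielding the explicit solutions $\phi[u_1] = \sin(\pi x) - 2x$ and $\phi[u_2] = \tfrac14\cos(2\pi x)$. Inserting these into $h_1(u_1,u_2)$ and applying orthogonality of the trigonometric basis on $(-1/2,1/2)$ reduces every integral to an elementary product; a direct calculation then yields
\begin{equation*}
h_1(u_1,u_2) \;=\; \tfrac{9\pi^2}{16} + o(1) \quad \text{as } Z \to \infty.
\end{equation*}
By continuity of $h_1$ in $Z$ there exists $Z^\ast$ so that $h_1(u_1,u_2) \neq 0$ for $Z > Z^\ast$, and then continuity of $H$ in $V$ produces a $V^\ast > 0$ with $H(u_1,u_2) \neq 0$ for all $0<|V|<V^\ast$. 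This shows $T_C$ is non self-adjoint throughout that parameter range.

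The main obstacle is the careful bookkeeping of boundary terms in the integration-by-parts identities that define $h_1$: the test functions satisfy $u_j'(\pm 1/2)=0$ but not $u_j(\pm 1/2)=0$, and $\phi_1'(\pm 1/2)=1$ rather than zero, so several non-trivial boundary contributions appear and must be shown to cancel so that $h_1$ reduces to the bulk form $(1-\phi_1')(u_1'u_2-u_1u_2')$ plus the explicitly computable rank-one and transport pieces. The large-$Z$ asymptotic is the analytic lever that keeps the remaining algebra elementary and makes transparent the non-cancellation between the advective contribution and the elliptic-coupling contribution, which would otherwise be opaque at finite $Z$.
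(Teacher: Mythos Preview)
Your approach is essentially identical to the paper's: both compute the adjoint commutator $H(u_1,u_2)$ on the explicit test pair $u_1=\sin(\pi x)$, $u_2=\cos(2\pi x)$, Taylor-expand in $V$ using the asymptotics of Lemma~\ref{lem:TW_asymptotic_form} (with $P'(0)=0$ so that $\phi[u]$ is unaffected at first order), and then pass to the large-$Z$ regime via Lemma~\ref{lem:P0_asymptotic_form} to obtain a nonzero limiting value of $\partial_V H|_{V=0}$. Your packaging of the first-order operator $L_1$ and the reduction of the advection plus $-(u\phi_1')'$ pieces to a bulk term via integration by parts is a slightly cleaner bookkeeping device than the paper's direct expansion, but the logic is the same. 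Note that your limiting constant $\tfrac{9\pi^2}{16}$ differs from the paper's value $-3$; since the argument only requires this number to be nonzero, the discrepancy is immaterial to the proof, though you should double-check the arithmetic before finalizing.
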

	\begin{proof}
		As in the proof of Lemma \ref{lem:SS_self-adjoint}, we will use the adjoint commutator. We will show that there exists $V^\ast>0$ and $u_1,u_2\in \tilde X^2_C$ so that if $0<|V|<V^\ast$, then the adjoint commutator $H$ for the operator $T_C=T_C(V)$ evaluated at $u_1,u_2$ is nonzero. This shows that $T_C(V)$ is non self-adjoint.
		
		Let $u_1(x)=\sin(\pi x)$ and $u_2(x)=\cos(2\pi x)$. Both $u_1$ and $u_2$ are in $\tilde X^2_C$. For $i=1,2$ et $\psi_i$ satisfy $-Z\psi_i''+\psi_i=P_T(V)u_i$ with periodic boundary conditions in $(-1/2,1/2)$. Then
		\begin{equation}
			\begin{split}
				H(u_1,u_2)&=\int_{-1/2}^{1/2}u_1u_2''-u_2u_1''\,dx+\int_{-1/2}^{1/2}m_T'(u_1\psi_2'(1/2)-u_2\psi_1'(1/2))\,dx+V\int_{-1/2}^{1/2}u_1u_2'-u_2u_1'\,dx\\
				&-\int_{-1/2}^{1/2}u_1(m_T\psi_2')'-u_2(m_T\psi_1')'\,dx-\int_{-1/2}^{1/2}u_1(u_2\phi_T')'-u_2(u_1\phi_T')'\,dx.
			\end{split}
		\end{equation}
		The function $\psi_1,\psi_2$ depend on $V$ through $P_T(V)$, but from Lemma \ref{lem:TW_asymptotic_form}, $P_T'(0)=0$, so $\partial_V\psi_i|_{V=0}=0$. Also from Proposition \ref{lem:TW_asymptotic_form}, the traveling wave solution satisfy $\partial_V m_T|_{V=0}=m_1$ given by \eqref{eq:m1} and $\partial_V\phi_T|_{V=0}=m_1(x)+x$. Therefore,
		\begin{equation}\label{eq:H_derivative}
			\begin{split}
				\partial_VH(u_1,u_2)|_{V=0}&=\int_{-1/2}^{1/2}u_1u_2''-u_2u_1''\,dx+\int_{-1/2}^{1/2}m_1'(u_1\psi_2'(1/2)-u_2\psi_1'(1/2))\,dx+\int_{-1/2}^{1/2}u_1u_2'-u_2u_1'\,dx\\
				&-\int_{-1/2}^{1/2}u_1(m_1\psi_2')'-u_2(m_1\psi_1')'\,dx-\int_{-1/2}^{1/2}u_1(u_2\phi_1')'-u_2(u_1\phi_1')'\,dx.
			\end{split}
		\end{equation}
		Since each of the functions $m_1$, $\phi_1$, $u_1$, $u_2$, $\psi_1$, and $\psi_2$ are explicitly known, and using $P_0=\pi^2Z+O(1)$ from Lemma \ref{lem:P0_asymptotic_form}, we may explicitly calculate the integrals in \eqref{eq:H_derivative} and find the asymptotic expansion of the result in large $Z$:
		\begin{equation}
			\partial_VH(u_1,u_2)|_{V=0}=-3+O(1/Z).
		\end{equation}
		Therefore, for sufficiently large $Z^\ast$, if $Z>Z^\ast$, then $\partial_VH(u_1,u_2)|_{V=0}\neq 0$. Thus, there exists $V^\ast$ so that if $0<|V|<V^\ast$, $H(u_1,u_2)\neq 0$. We conclude that for $Z>Z^\ast$ and $0<|V|<V^\ast$, $A_T(V)$ is non self-adjoint. 
	\end{proof}
	
	\paragraph{Condition (ii): the resolvent set of $T_C$ contains the right half-plane, see appendix C. }	
	To establish that condition (ii) holds, we prove a sequence of four results. First, we show that the resolvent of $T_C$, if it exists, is compact. Then we show that for some $\lambda_0>0$, there exists a unique weak solution to $(\lambda_0-T_C)u=w$ for each $w$, which implies that the resolvent $(\lambda_0I-T_C)^{-1}$ exists. Next, we use the first two results to show that the spectrum of $T_C$ consists only of its eigenvalues. Finally, we show that all the eigenvalues of $T_C$ have negative real part. Thus, the resolvent set contains all complex numbers with positive real part, and condition (ii) is satisfied.
	
	\begin{proposition}\label{prop:TW_compact_inverse}
		Suppose $\lambda\in C$ such that $\lambda I-T_C$ is invertible. Then $(\lambda I-T_C)^{-1}:L^2(-1/2,1/2)\to L^2(-1/2,1/2)$ is a compact operator. 
	\end{proposition}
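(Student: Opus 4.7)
The plan is to adapt the energy-estimate strategy used for the stationary case in Lemma \ref{lem:SS_inverse_compact}, with additional care needed to handle the several new lower-order and non-local terms appearing in $T_C$ relative to $S_C$.

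I would begin by noting that $T_C$, regarded as an unbounded operator on $L^2(-1/2,1/2)$ with domain $\tilde X^2_C$, is closed (a second-order differential operator with smooth coefficients on a bounded interval, plus a nonlocal perturbation continuous in $L^2$). Since $\lambda I - T_C$ is invertible by hypothesis, the closed graph theorem implies that $(\lambda I - T_C)^{-1}: L^2 \to L^2$ is bounded. Hence, for any bounded sequence $(w_k) \subset L^2$, the sequence $v_k := (\lambda I - T_C)^{-1} w_k$ is bounded in $L^2$.

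The central step is to upgrade this to a bound in $H^1$, after which the compact embedding $H^1(-1/2,1/2) \hookrightarrow\hookrightarrow L^2(-1/2,1/2)$ will permit the extraction of an $L^2$-convergent subsequence and prove compactness. To obtain the $H^1$ bound, I pair $(\lambda I - T_C) v_k = w_k$ with $v_k$ in $L^2$ and integrate by parts. The principal term $-\langle v_k'', v_k \rangle = \|v_k'\|_{L^2}^2$ (using $v_k'(\pm 1/2) = 0$) supplies the desired $H^1$ contribution. The remaining terms arise from the advection piece $Vv_k'$, the nonlocal scalar multiplier $\phi_k'(1/2)\, m_T'$ where $\phi_k$ solves \eqref{eq:linearized_phi_and_phi_T}, and the two product terms $(m_T \phi_k')'$ and $(v_k \phi_T')'$. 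Using Proposition \ref{prop:elliptic_estimates} to bound $\|\phi_k\|_{H^2}$ (and thus $\|\phi_k\|_{L^\infty}$ and $\|\phi_k'\|_{L^\infty}$ via 1D Sobolev embedding) by $\|v_k\|_{L^2}$, together with the smoothness of $m_T$ and $\phi_T$ (Remark \ref{rem:smooth_TWs}), each of these contributions is either bounded by $C\|v_k\|_{L^2}^2$ or by $C\|v_k\|_{L^2}\|v_k\|_{H^1}$. An application of Young's inequality absorbs the cross terms into $\tfrac12\|v_k'\|_{L^2}^2 + C\|v_k\|_{L^2}^2$, yielding
\begin{equation*}
\|v_k'\|_{L^2}^2 \le C\bigl(\|v_k\|_{L^2}^2 + \|w_k\|_{L^2}^2\bigr).
\end{equation*}

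I expect the main technical subtlety to lie in the boundary contributions generated by integration by parts. Unlike the self-adjoint case, the advection $Vv_k'$ together with $(v_k \phi_T')'$ yield genuine boundary terms of the form $V|v_k(\pm 1/2)|^2$, because $v_k$ itself need not vanish at $x = \pm 1/2$. These are handled via the 1D trace/Sobolev inequality $|v_k(\pm 1/2)|^2 \le C\|v_k\|_{L^2}\|v_k\|_{H^1} + C\|v_k\|_{L^2}^2$, after which a further Young absorption permits the $\|v_k'\|_{L^2}^2$ piece to be moved to the left-hand side with a small coefficient. With this bound in hand, $(v_k)$ is bounded in $H^1$, and Rellich-Kondrachov yields the required $L^2$-convergent subsequence. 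Since $(w_k)$ was arbitrary, $(\lambda I - T_C)^{-1}$ is compact on $L^2$.
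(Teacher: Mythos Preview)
Your proposal is correct and follows essentially the same energy-estimate strategy as the paper: pair $(\lambda I - T_C)v_k = w_k$ with $v_k$, extract the coercive term $\|v_k'\|_{L^2}^2$, bound the remaining pieces via Proposition~\ref{prop:elliptic_estimates}, and invoke the compact embedding $H^1 \hookrightarrow\hookrightarrow L^2$.

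Two minor tactical differences are worth noting. First, for boundedness you invoke the closed graph theorem, which is cleaner than the paper's route: the paper argues by contradiction, assuming $\|v_k\|_{L^2}=1$ with $\|w_k\|_{L^2}\to 0$, and reaches a contradiction only after extracting a weakly convergent subsequence. Second, your concern about boundary terms from $Vv_k'$ and $(v_k\phi_T')'$ is unnecessary: the paper groups these as $(V-\phi_T')v_k' - v_k\phi_T''$ \emph{before} pairing, and then bounds $\langle (V-\phi_T')v_k', v_k\rangle$ directly by Cauchy--Schwarz without any integration by parts, observing that this term is at most linear in $\|v_k'\|_{L^2}$ while the principal term is quadratic. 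This sidesteps the trace inequality entirely. Your route via trace and Young absorption also works, but the paper's grouping is slightly more economical.
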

	\begin{proof}
		To be compact, $(\lambda I-T_C)^{-1}$ must be bounded. Suppose, to the contrary that it is unbounded. Then there exist sequences $(v_k)\subset \tilde X^2_C$ and $(w_k)\subset L^2(-1/2,1/2)$ such that
		\begin{equation}
			(\lambda I-T_C)v_k=w_k,\quad \Vert v_k\Vert_{L^2}=1,\quad\Vert w_k\Vert_{L^2}\leq 1/k.
		\end{equation}
		Let $\phi_k$ satisfy $-Z\phi_k''+\phi_k=Pv_k$ with periodic boundary conditions. Then the following sequence is bounded:
		\begin{equation}\label{eq:TW_compact_resolvent_inner_product}
			\begin{split}
				\langle w_k,v_k\rangle_{L^2}&=\langle (\lambda I-T_C)v_k,v_k\rangle_{L^2}\\
				&=\lambda\Vert v_k\Vert_{L^2}^2+\Vert v_k'\Vert_{L^2}^2+\langle (V-\phi_T') v_k',v_k\rangle_{L^2}+\langle (\phi_k'(1/2)-\phi_k')m_T',v_k\rangle_{L^2}\\
				&\hspace{2in}-\langle\phi_T''v_k,v_k\rangle_{L^2}-\langle \phi_k''m_T,v_k\rangle_{L^2}.
			\end{split}
		\end{equation}
		Every term in this sequence is individually bounded due to Proposition \ref{prop:elliptic_estimates} in Appendix A except possibly $\Vert v_k'\Vert_{L^2}^2$ and $\langle (V-\phi_T') v_k',v_k\rangle_{L^2}$. However, the sum of these terms must be bounded. While the former is quadratic in $\Vert v_k'\Vert_{L_2}$, the latter is at most linear. Therefore, they must both be independently bounded as well.
		
		Since $\Vert v_k\Vert_{L^2}$ and $\Vert v_k'\Vert_{L^2}$ are both bounded, we conclude that $(v_k)$ is bounded with respect to the $H^1$ norm. The remaining arguments giving rise to a contradiction and proving that $(\lambda I-T_C)^{-1}$ is bounded and, moreover, compact, are identical to those in the proof of Lemma \ref{lem:SS_inverse_compact}.
	\end{proof}
	
	\begin{proposition}\label{prop:exists_inverse}
		There exists $V^\ast>0$ and $\lambda_0\geq 0$ such that for each $w\in X^0$, there exists a unique weak solution $u\in X^1$ to $T_Cu=w$. 
	\end{proposition}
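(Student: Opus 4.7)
The plan is to apply the Lax-Milgram theorem to the bilinear form associated with the resolvent equation $(\lambda_0 I - T_C)u=w$. Multiplying by a test function $v\in X^1$ and integrating by parts, using the natural no-flux condition $u'(\pm 1/2)=0$, yields the bilinear form
\begin{equation*}
B_{\lambda_0}(u,v)=\lambda_0\langle u,v\rangle+\langle u',v'\rangle-V\langle u',v\rangle-\phi'(1/2)\langle m_T',v\rangle-\langle m_T\phi',v'\rangle-\langle u\phi_T',v'\rangle+\mathcal{B}(u,v),
\end{equation*}
where $\phi$ solves $-Z\phi''+\phi=Pu$ with periodic boundary conditions, and $\mathcal{B}(u,v)=[m_T\phi' v + u\phi_T' v]_{-1/2}^{1/2}$ collects the boundary contributions from the advective terms. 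Weak solutions $u\in X^1$ are then defined by $B_{\lambda_0}(u,v)=\langle w,v\rangle$ for every $v\in X^1$.

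Continuity of $B_{\lambda_0}$ on $X^1\times X^1$ is routine. Elliptic regularity (Proposition \ref{prop:elliptic_estimates} in Appendix A) gives $\|\phi\|_{H^2}\leq C\|u\|_{L^2}$ and in particular $\|\phi'\|_{L^\infty}+|\phi'(1/2)|\leq C\|u\|_{L^2}$. The traveling-wave profiles $m_T$ and $\phi_T$ are smooth (Remark \ref{rem:smooth_TWs}), with $\|m_T-1\|_{L^\infty}$, $\|m_T'\|_{L^2}$, and $\|\phi_T'\|_{L^\infty}$ all of order $V$ by Lemma \ref{lem:TW_asymptotic_form}. The boundary form $\mathcal{B}$ is handled by the 1D Sobolev embedding $\|u\|_{L^\infty}\leq C\|u\|_{H^1}$. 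H\"older's inequality then bounds every contribution to $B_{\lambda_0}(u,v)$ by $C\|u\|_{H^1}\|v\|_{H^1}$.

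The crux is coercivity. Testing with $v=u$ produces the good terms $\lambda_0\|u\|_{L^2}^2+\|u'\|_{L^2}^2$. Every contribution carrying a factor of $V$ --- directly in $-V\langle u',u\rangle$, or indirectly through $\|m_T'\|=O(V)$ and $\|\phi_T'\|=O(V)$, including the part of $\mathcal{B}(u,u)$ proportional to $\phi_T'$ --- is absorbed by Young's inequality provided $V<V^\ast$ for a sufficiently small $V^\ast$. The remaining $V$-independent coupling term $-\langle m_T\phi',u'\rangle$, together with its boundary counterpart in $\mathcal{B}$, is bounded by $C\|u\|_{L^2}\|u\|_{H^1}$ via elliptic regularity and the trace embedding, and is absorbed into $\lambda_0\|u\|_{L^2}^2+\tfrac12\|u'\|_{L^2}^2$ using Young's inequality after choosing $\lambda_0$ sufficiently large in terms of $P$ and $Z$. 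The zero-mean Poincar\'e inequality then upgrades this to $B_{\lambda_0}(u,u)\geq c\|u\|_{H^1}^2$.

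The main obstacle is precisely this $V$-independent coupling term $\langle m_T\phi',u'\rangle$: it reflects the nonlocal coupling of $u$ to $\phi$ (present already in the self-adjoint limit $V=0$) and therefore cannot be controlled by smallness of $V$ alone; it is this term that forces us to take $\lambda_0$ large rather than to take $\lambda_0=0$. Once coercivity and continuity are secured, the Lax-Milgram theorem produces a unique weak solution $u\in X^1$ to $B_{\lambda_0}(u,\cdot)=\langle w,\cdot\rangle$ for every $w\in X^0$, completing the proof.
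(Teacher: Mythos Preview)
Your proposal is correct and follows the same Lax--Milgram strategy as the paper. The only cosmetic difference is that the paper keeps the advective terms $(m_T\phi')'$ and $(u\phi_T')'$ expanded via the product rule and pairs the resulting $m_T\phi''$, $u\phi_T''$ directly against $v$ in $L^2$ (using the elliptic estimate $\|\phi''\|_{L^2}\leq C\|u\|_{L^2}$ from Proposition~\ref{prop:elliptic_estimates}), whereas you integrate those terms by parts and pick up the boundary contributions $\mathcal{B}(u,v)$. The paper's choice avoids the separate trace argument, but otherwise the two routes coincide: both establish boundedness and a G\r{a}rding-type coercivity $b\|u\|_{H^1}^2\leq B[u,u]+\lambda_0\|u\|_{L^2}^2$, with $\lambda_0$ absorbing precisely the $V$-independent nonlocal coupling term you identify as the main obstacle.
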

	\begin{proof}
		Define the bilinear form $B:X^1\times X^1\to\mathbb R$ by
		\begin{equation}
			B[u,v]=\langle u',v'\rangle_{L^2}-\langle (\phi'(1/2)-\phi')m_T'+(V-\phi_T')u'-m_T\phi''-u\phi_T'',v\rangle_{L^2}.
		\end{equation}
		Then $u\in X^1$ is a weak solution to $T_Cu=w$ if and only if $B[u,v]=\langle w,v\rangle_{L^2}$ for all $v\in X^1$. We claim that there exist $a,b,V^\ast>0$ and $\lambda_0\geq 0$ such that
		\begin{itemize}
			\item $|B[u,v]|\leq a\Vert u\Vert_{H^1}\Vert v\Vert_{H^1}$
			\item $b\Vert v\Vert^2_{H^1}\leq B[v,v]+\lambda_0\Vert v\Vert_{L^2}^2$.
		\end{itemize}
		The proof of these facts follows from the Poincar\'e inequality and the fact that $\Vert \phi_T'\Vert_{L^2}=O(V)$. Therefore, by the Lax-Milgram Theorem, there exists a unique weak solution to $(\lambda_0 I-T_C)u=w$.
	\end{proof}
	
	\begin{proposition}\label{prop:spectrum_is_eigenvalues}
		The spectrum of $T_C$ consists only of its eigenvalues.
	\end{proposition}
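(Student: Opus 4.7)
The strategy is to combine Propositions \ref{prop:TW_compact_inverse} and \ref{prop:exists_inverse} with the Fredholm alternative to reduce spectral membership to being an eigenvalue. By Proposition \ref{prop:exists_inverse} there exists $\lambda_0 \geq 0$ in the resolvent set of $T_C$, so $R := (\lambda_0 I - T_C)^{-1}$ is a well-defined operator from $X^0$ onto the domain of $T_C$. By Proposition \ref{prop:TW_compact_inverse} applied at $\lambda_0$, $R$ is compact as an operator from $L^2(-1/2,1/2)$ to itself.

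The plan is then, for arbitrary $\lambda \in \mathbb{C}$ with $\lambda \neq \lambda_0$, to use the algebraic factorization
\begin{equation*}
\lambda I - T_C = (\lambda_0 I - T_C) - (\lambda_0 - \lambda) I = (\lambda_0 I - T_C)\bigl[I - (\lambda_0 - \lambda) R\bigr],
\end{equation*}
which holds as an identity of operators from $D(T_C)$ to $X^0$. Since $(\lambda_0 I - T_C)$ is a bijection between these spaces, invertibility of $\lambda I - T_C$ is equivalent to invertibility of $I - (\lambda_0 - \lambda) R$ on $X^0$.

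Next I would invoke the Fredholm alternative for the compact operator $(\lambda_0 - \lambda) R$: the operator $I - (\lambda_0 - \lambda) R$ is invertible on $X^0$ if and only if it is injective. If it fails to be injective, there exists a nonzero $u \in X^0$ with $u = (\lambda_0 - \lambda) R u$. Applying $\lambda_0 I - T_C$ to both sides and rearranging gives $T_C u = \lambda u$, exhibiting $\lambda$ as an eigenvalue of $T_C$ with eigenvector $u$ (which automatically lies in $D(T_C)$ since $u = (\lambda_0 - \lambda) R u \in \operatorname{range}(R) = D(T_C)$). Hence every $\lambda \neq \lambda_0$ in $\sigma(T_C)$ is an eigenvalue, and since $\lambda_0$ itself lies in the resolvent set, this concludes the proof.

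The main subtlety, and likely obstacle, is to be careful about domains and the closedness of $T_C$ so that the factorization and the Fredholm alternative apply cleanly; in particular one must verify that $u$ produced from the kernel of $I - (\lambda_0-\lambda) R$ genuinely lies in $D(T_C) \subset X^1$ rather than merely in $X^0$, which follows because $u$ is in the range of the compact solver $R$. Everything else is standard consequence of compact resolvent theory.
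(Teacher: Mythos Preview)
Your proposal is correct and follows essentially the same approach as the paper: both combine Propositions \ref{prop:TW_compact_inverse} and \ref{prop:exists_inverse} to obtain a compact resolvent at some $\lambda_0$, then apply the Fredholm alternative to $I - (\lambda_0-\lambda)(\lambda_0 I - T_C)^{-1}$ to conclude that any spectral point must be an eigenvalue. Your treatment is in fact slightly more careful than the paper's about the domain issue (verifying that a kernel element lies in $D(T_C)$ via the range of $R$), which the paper glosses over.
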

	\begin{proof}
		This proof is essentially showing that the Fredh\"olm alternative applies to $T_C$. Let $\lambda\in\mathbb C$, and let $\lambda_0$ be defined as in \ref{prop:exists_inverse}. Define $\hat T_C=\lambda_0 I-T_C$ and let $\lambda'=\lambda_0-\lambda$. Then $\lambda I-T_C=\hat T_C-\lambda'I$. By Proposition \ref{prop:exists_inverse}, $\hat T_C$ is invertible, and by Proposition \ref{prop:TW_compact_inverse}, $\hat T_C^{-1}$ is compact. Therefore, we may apply the Fredh\"olm alternative for compact operators \cite{Dou2012} to see that exactly one of the following holds:
		\begin{itemize}
			\item $(I-\lambda'\hat T_C^{-1})v=\hat T_C^{-1}w$ has a unique solution for each $w\in X^0$,
			\item $(I-\lambda'\hat T_C^{-1})v=0$ has a nontrivial solution.
		\end{itemize}
		In either case, we may multiply by $\hat T_C$ to see that either $(\lambda I-T_C)v=w$ has a unique solution for all $w\in X^0$ or $(\lambda I-T_C)v=0$. Therefore, either $\lambda I-T_C$ is invertible (with bounded inverse per Proposition \ref{prop:TW_compact_inverse}) and therefore $\lambda$ is not in the spectrum, or $\lambda$ is an eigenvalue of $T_C$. Therefore, the spectrum of $T_C$ consists only of its eigenvalues.
	\end{proof}
	
	The following lemma show that all eigenvalues of $T_C$ have negative real part except possibly one. The following Theorem concerns this remaining eigenvalue showing that it too has negative real part, thus proving the desired result.
	
	\begin{lemma}\label{lem:mostly_negative_evs}
		For $V$ sufficiently small the eigenvalues of $T_C=T_C(V)$ all have negative real part bounded away from $0$ except possibly one. Moreover, when $V=0$, all the eigenvalues of $T_V(0)$ are negative (and real) except for a zero eigenvalue with multiplicity 1.
	\end{lemma}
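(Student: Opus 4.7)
The plan is to establish the result at $V=0$ first and then use analytic perturbation theory to extend it to small $V$. When $V=0$, the traveling wave profile degenerates: $m_T \equiv 1$, $\phi_T \equiv P_0$, and consequently $m_T'\equiv \phi_T'\equiv \phi_T'' \equiv 0$. Substituting these into \eqref{eq:SL_TW_linearization}, I would identify $T_C(0)u = u'' - \phi''$, which is exactly the operator $S_C$ from Section \ref{sec:stationary_solution_stability} evaluated at $P = P_0$. The proof of Lemma \ref{lem:SS_self-adjoint} is valid for every $P$, so $T_C(0)$ is self-adjoint on $L^2(-1/2,1/2)$ and all its eigenvalues are real. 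The compactness argument in Lemma \ref{lem:SS_inverse_compact}, applied to the restriction of $T_C(0)$ to the orthogonal complement of its kernel, shows that the spectrum of $T_C(0)$ is a discrete sequence with only possible accumulation point at $-\infty$.

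To locate these eigenvalues, I would combine two facts. First, the proof of Theorem \ref{thm:bifurcation} part (i), under the generic hypothesis $(1-P_0)/Z\neq -n^2\pi^2$, shows that the kernel of $T_C(0)$ is one-dimensional and spanned by the explicit $u_0$ constructed there, so $0$ is a simple eigenvalue. Second, eigenvalues of $S_C$ are real-analytic functions of $P$ by standard self-adjoint analytic perturbation theory, and at $P=0$ they coincide with the strictly negative Neumann-Laplace eigenvalues $-n^2\pi^2$. Since \eqref{eq:bifurcation_condition_2} is precisely the condition for $S_C$ to admit a zero eigenvalue, and $P_0$ is the smallest positive solution other than the spurious $P=1$ by Lemma \ref{lem:P0_asymptotic_form}, no eigenvalue of $S_C$ can have reached zero for any $P\in(0,P_0)\setminus\{1\}$. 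At $P=P_0$ exactly one eigenvalue has just reached zero, and all the others therefore remain strictly negative; by discreteness of the spectrum they are separated from $0$ by a uniform positive spectral gap. This yields the second (``moreover'') part of the lemma.

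For the first part I would perturb away from $V=0$. By Lemma \ref{lem:TW_asymptotic_form} and Remark \ref{rem:smooth_TWs}, the profile functions $m_T$, $\phi_T$ and the selected parameter $P_T(V)$ depend smoothly on $V$, with $m_T\to 1$, $\phi_T\to P_0$, $P_T\to P_0$ as $V\to 0$. Every term in $T_C(V) - T_C(0)$ carries a factor of either $V$, $m_T-1$, or $\phi_T-P_0$, so the difference is a bounded operator from $H^2(-1/2,1/2)$ to $L^2(-1/2,1/2)$ with norm $O(V)$. Standard analytic perturbation theory for isolated eigenvalues of finite algebraic multiplicity then gives: the simple zero eigenvalue of $T_C(0)$ persists as a single eigenvalue $\lambda_\ast(V)$ of $T_C(V)$ depending smoothly on $V$, while every other eigenvalue of $T_C(V)$ lies within $o(1)$ of the corresponding strictly negative eigenvalue of $T_C(0)$. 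Uniform control over the entire infinite spectrum follows from the compactness of the resolvent (Proposition \ref{prop:TW_compact_inverse}) together with a resolvent estimate on a large contour encircling the negative part of the spectrum of $T_C(0)$, which confines any eigenvalue of $T_C(V)$ outside a small neighborhood of the origin to a uniformly negative strip, for $|V|$ sufficiently small.

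The principal difficulty lies in this perturbation step, since $T_C(V)$ is non-self-adjoint for $V\neq 0$ and eigenvalues may in principle leave the real axis or coalesce. Simplicity of the zero eigenvalue of $T_C(0)$ is exactly what rescues the argument: it forces a single analytic branch to emerge from the origin, giving the ``except possibly one'' clause rather than a splitting into two eigenvalues. For the remaining eigenvalues, the delicate point is verifying a uniform-in-$V$ bound on $(\lambda I - T_C(V))^{-1}$ along a contour encircling the non-zero part of the spectrum of $T_C(0)$; this follows from the $H^2$-to-$L^2$ estimate on the perturbation together with the second resolvent identity applied to the reference operator $T_C(0)$.
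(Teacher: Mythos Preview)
Your approach is genuinely different from the paper's. The paper represents $T_C(V)$ as an infinite matrix in the Fourier-type basis $v_n=\sin(n\pi x)$ ($n$ odd), $v_n=\cos(n\pi x)$ ($n$ even), computes the entries explicitly, and invokes a Gershgorin-type theorem for infinite matrices (Shivakumar--Williams--Rudraiah) to trap every eigenvalue except one in a disk strictly inside the left half-plane. Your route---self-adjoint spectral analysis at $V=0$ followed by relatively bounded perturbation in $V$---is more conceptual and avoids the explicit matrix computations, but there are two places where it is not yet a proof.

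First, the assertion that \eqref{eq:bifurcation_condition_2} is \emph{precisely} the condition for $S_C$ to have a zero eigenvalue is incorrect as stated. The analysis in Theorem~\ref{thm:bifurcation} produces an odd kernel element; the even modes $\cos(n\pi x)$ ($n$ even) are simultaneous eigenfunctions of the periodic and Neumann problems and give $S_C\cos(n\pi x)=\bigl(-n^2\pi^2+\tfrac{Pn^2\pi^2}{Zn^2\pi^2+1}\bigr)\cos(n\pi x)$, which vanishes at $P=1+n^2\pi^2 Z$, values that do \emph{not} satisfy \eqref{eq:bifurcation_condition_2}. Your continuity-in-$P$ argument therefore needs the additional check that $P_0<1+4\pi^2 Z$, which does follow from Lemma~\ref{lem:P0_asymptotic_form} (since $P_0=1+4Zw^2$ with $w<\pi/2$, or $P_0<1$ when $Z<1/12$), but you must say so.

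Second, the step you correctly flag as delicate---the uniform resolvent bound along a contour separating $0$ from the rest of $\sigma(T_C(0))$---is not automatic. You need that $B(V)=T_C(V)-T_C(0)$ is $T_C(0)$-bounded with relative bound tending to $0$ as $V\to 0$ (which holds because $B(V)$ is first-order in $u$ with $O(V)$ coefficients, hence controlled by $\|u\|_{H^1}\le \epsilon\|u\|_{H^2}+C_\epsilon\|u\|_{L^2}$), \emph{and} that $\|(\lambda I-T_C(0))^{-1}\|_{L^2\to H^1}$ is uniformly bounded along the contour, including as $|\mathrm{Im}\,\lambda|\to\infty$. The second resolvent identity alone does not give this; you need a quantitative elliptic estimate. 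The paper's Gershgorin approach handles this uniformity directly through the explicit diagonal dominance, which is what that method buys over yours.
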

	\begin{proof}
		The domain of $T_C(V)$ is $\tilde X^2_C$, which has the (Schauder) basis $\mathcal B=\{v_1,v_2,v_3,\cdots\}$ where $v_n(x)=\sin(n\pi x)$ for $n$ odd, and $v_n(x)=\cos(n\pi x)$ for $n$ even. For each $n,m\in\mathbb N$, define
		\begin{equation}
			a_{mn}=\langle v_m, T_C(V)v_n\rangle_{L^2}
		\end{equation}
		Treating $A=(a_{mn})$ as an ``infinite matrix'' operator on $\ell^2$, we see that $\lambda$ is an eigenvalue of $T_C(V)$ if and only if $\lambda/2$ it is an eigenvalue of $A$. In particular, the eigenvalues of $A$ and $T_C(V)$ have the same sign.
		
		Many of the terms in $T_C(V)$ vanish as $V\to 0$. In particular, the traveling waves $m_T$ and $\phi_T$ and their derivatives depend smoothly on $V$ in $L^2$. Moreover, when $V=0$, $m_T$ and $\phi_T$ are both constant in $x$ (they are stationary states). Therefore, writing $m_T=1+\tilde m_T$, there exists $C_1,V^\ast>0$ so that if $|V|<V^\ast$, then  
		\begin{equation}\label{eq:TW_controlled_by_V}
			\Vert m_T'\Vert_{L^1},\Vert\phi_T'\Vert_{L^1},\Vert\phi_T''\Vert_{L^1},\Vert\tilde m_T\Vert_{L^1}\leq C_1|V|.
		\end{equation}
		For each $m$, let $\phi_m$ solve $-Z\phi_m''+\phi_m=P_T(V)v_m$ with periodic boundary conditions in $(-1/2,1/2)$. For each $n,m$, define 
		\begin{equation}
			d_{mn}=\langle v_m,\phi_n'(1/2)m_T'+Vv_n'-\tilde m_T\phi_n''-m_T'\phi_n'-v_n'\phi_T'-v_n\phi_T''\rangle_{L^2}.
		\end{equation}
		From \eqref{eq:TW_controlled_by_V} and Lemma \ref{lem:inner_pruduct_perturbation}, there exists $C>0$ independent of $m$ such that
		\begin{equation}
			\sum_{n=1}^\infty|d_{mn}|\leq C|V|\quad\text{and}\quad\sum_{m=1}^\infty|d_{mn}|\leq C|V|.
		\end{equation}
		Then we may write for each $n,m$:
		\begin{equation}
			a_{mn}=\langle v_m,T_C(0)v_n\rangle_{L^2}+d_{mn}.
		\end{equation}
		The operator $T_C(0)$ (which is equal to $S_C(P_0)$) is defined by $T_C(0)u=u''-\phi''$ where $\phi$ solves $-Z\phi''+\phi=P_0u$ with periodic boundary conditions in $(-1/2,1/2)$. Thus, letting $c_{mn}=\langle v_n,T_C(0)v_m\rangle_{L^2}$, we have $a_{mn}=c_{mn}+d_{mn}$. We can explicitly calculate $c_{mn}$:
		\begin{equation}
			c_{mn}=\begin{cases}
				-n^2\pi^2+\frac{P_0}{Z}\frac{1}{1+\frac{1}{\pi ^2 n^2 Z}} & n=m\text{ even}\\
				-n^2\pi^2+\frac{P_0}{Z}\frac{1}{1+\frac{1}{\pi ^2 n^2 Z}}-\frac{4P_0\coth\left(\frac{1}{2\sqrt{Z}}\right)}{\sqrt{Z}(1+n^2\pi^2Z)^2} & n=m \text{ odd}\\
				0 & n\neq m \text{ either $m$ or $n$ even}\\
				-\frac{4 P_0 (-1)^{\frac{m+1}{2}+\frac{n+1}{2}} \coth \left(\frac{1}{2 \sqrt{Z}}\right)}{\sqrt{Z} \left(\pi ^2 m^2 Z+1\right) \left(\pi ^2 n^2 Z+1\right)} & n\neq m \text{ both odd}.
			\end{cases}
		\end{equation}
		
		To show that all the eigenvalues of $A$ are negative except possibly one of them we will use Theorem 3 of \cite{ShiWilRud1987}, which gives a Gershgorin-type result  showing that all eigenvalues of an infinite matrix have negative real part. While possibly not all eignevalues of $A$ have negative real part, we will see using Theorem 3 of \cite{ShiWilRud1987} that all eigenvalues of $D=B-I$ do have negative real part, and that all but one of these eigenvalues has real part less than $-1$, thus proving the desired result.
		
		The specific result of Theorem 3 of \cite{ShiWilRud1987} is that there are countably many eigenvalues $\hat\lambda_n$ of $B=(b_{mn})$ and for each $n$,
		\begin{equation}
			|\hat\lambda_n-b_{nn}|<Q_n:=\sum_{\substack{m=1\\ m\neq n}}^\infty|b_{mn}|,
		\end{equation}
		provided the following conditions are met:
		\begin{enumerate}
			\item $b_{nn}\neq 0$ for any $n$ and $\lim_{n\to\infty}|b_{nn}|=\infty$.
			\item There exists $0<\rho<1$ so that for each odd $n$,
			\begin{equation}
				\frac{Q_n}{|b_{nn}|}<\rho.
			\end{equation}
			\item For each odd $n,m$ with $n\neq m$, $|b_{nn}-b_{mm}|\geq Q_n+Q_m$
			\item For each $m$, $\sup\{|b_{mn}|:n\in\mathbb N\}<\infty$.
		\end{enumerate}
		We will show that $B$ satisfies each of these conditions for small enough $V<V^\ast$ and large enough $Z$.
		\begin{enumerate}
			\item Observe that
			\begin{equation}
				b_{nn}<-n^2\pi^2+\frac{P_0}{Z}\frac{1}{\frac{1}{n^2\pi^2Z}+1}+C|V|-1.
			\end{equation}
			Let $0<\varepsilon<1/(2+2\pi^2)$. Using Lemma \ref{lem:P0_asymptotic_form}, there exists $Z^\ast$ large enough that for all $Z>Z^\ast$, $P_0/Z<\pi^2+\varepsilon/2$.  There also exists $V^\ast>0$ so that if $|V|<V^\ast$, $C|V|<\varepsilon/2$. Therefore, for large enough $Z$, $b_{nn}<-\pi^2(n^2-1)-1+\varepsilon<0$. It is clear that $\lim_{n\to\infty}|b_{nn}|=\infty$.
			
			\item We have
			\begin{equation}
				Q_n=\sum_{\substack{m=1\\ m\neq n}}^\infty|b_{mn}|\leq \sum_{\substack{m=1\\ m\neq n}}^\infty|c_{mn}|+\sum_{\substack{m=1\\ m\neq n}}^\infty|d_{mn}|
			\end{equation}
			If $n$ is even, $Q_n\leq\sum_{n=1}^\infty|d_{mn}|<C|V|<\varepsilon/2$. If $n$ is odd, we can explicitly calculate a convenient upper bound for $Q_n$:
			\begin{align}
				Q_n&<Q_n+\frac{4 P_0 \coth \left(\frac{1}{2 \sqrt{Z}}\right)}{\sqrt{Z} \left(\pi ^2 n^2 Z+1\right)^2}\\
				&\leq\sum_{\substack{m=1\\ m\text{ odd}}}^\infty\frac{4 P_0  \coth \left(\frac{1}{2 \sqrt{Z}}\right)}{\sqrt{Z} \left(\pi ^2 m^2 Z+1\right) \left(\pi ^2 n^2 Z+1\right)}+\sum_{m=1}^\infty|d_{mn}|\\
				&\leq \frac{P_0}{Z}\frac{1}{1+n^2\pi^2Z}+C|V|\\
				&<\frac{\pi^2+\varepsilon/2}{1+\pi^2}+\frac{\varepsilon}{2}\quad\text{assuming $Z^\ast\geq 1$}.\label{eq:Qn_bound}
			\end{align}
			We conclude that whether $n$ is even or odd, \eqref{eq:Qn_bound} is an upper bound for $Q_n$. We have already seen that for $Z>Z^\ast$, $b_{nn}<-1+\varepsilon$. Thus, for any $n$,
			\begin{equation}
				\frac{Q_n}{|b_{nn}|}<\frac{\pi^2+\varepsilon/2}{(1+\pi^2)(1-\varepsilon)}+\frac{\varepsilon/2}{1-\varepsilon}<\frac{\pi^2+\frac{1}{4+4\pi^2}}{(1+\pi^2)\left(1-\frac{1}{2+2\pi^2}\right)}+\frac{1}{(4+4\pi^2)\left(1-\frac{1}{2+2\pi^2}\right)}=\frac{2+5 \pi ^2+4 \pi ^4}{2+6 \pi ^2+4 \pi ^4}<1.
			\end{equation}
			Therefore, letting $\rho=\frac{2+5 \pi ^2+4 \pi ^4}{2+6 \pi ^2+4 \pi ^4}$, the second condition is satisfied.
			
			\item Let $n,m\in\mathbb N$ be odd with $n\neq m$. One can verify that for any $Z>0$,
			\begin{equation}
				0<\frac{4\sqrt{Z}\coth\left(\frac{1}{2\sqrt{Z}}\right)}{(1+\pi^2Z)^2}<1.
			\end{equation}
			Then if $Z>Z^\ast$ and $Z>Z^\ast$,
			\begin{align}
				|b_{nn}-b_{mm}|&\geq \pi^2|n^2-m^2|-\frac{P_0}{Z}\left|\frac{1}{\frac{1}{n^2\pi^2Z}+1}-\frac{1}{\frac{1}{m^2\pi^2Z}+1}\right|\\
				&\quad-\left|\frac{4P_0\coth\left(\frac{1}{2\sqrt{Z}}\right)}{\sqrt{Z}(1+n^2\pi^2Z)^2}-\frac{4P_0\coth\left(\frac{1}{2\sqrt{Z}}\right)}{\sqrt{Z}(1+m^2\pi^2Z)^2}\right|-2C|V|\\ 
				&\geq\pi^2|n^2-m^2|-2\frac{P_0}{Z}-\varepsilon\\
				&>3\pi^2-2(\pi^2-\varepsilon/2)-\varepsilon\\
				&>\pi^2-2\varepsilon.
			\end{align}
			On the other hand, we have seen that for each $n$, if $Z>Z^\ast$ and $|V|<V^\ast$, then $Q_n<1$, so $Q_n+Q_m<2<\pi^2-2\varepsilon$. Thus condition 3 is satisfied.
			
			\item This is clear.
		\end{enumerate}
		
		Thus, the eigenvalues of $B$ are enumerated $\hat\lambda_1,\hat\lambda_2,\hat\lambda_3,\cdots$, and for each $n$, $|b_{nn}-\hat\lambda_n|<Q_n$. Thus, for $n\geq 2$,
		\begin{equation}
			\text{Re}\,\hat\lambda_n<b_{nn}+Q_n<-4\pi^2+\frac{P_0}{V_0}+C|V|<-3\pi^2+\varepsilon<-1.
		\end{equation}
		Since the eigenvalues of $A$ are $\lambda_n=\hat \lambda_n+1$, we conclude that all $\lambda_n$ have negative real part bounded away from $0$ except for possibly $\lambda_1$. The eigenvalues of $T_C(V)$ are $2\lambda_n$ for $n=1,2,3,\cdots$, so the desired result holds.
		
		In the case $V=0$, the operator $T_C(0)$ is exactly the operator shown to have exactly one zero eigenvalue in the proof of Theorem \ref{thm:bifurcation}. Therefore, $T_C(0)$ has all negative eigenvalues (real because the operator is self-adjoint) except for one zero eigenvalue.
	\end{proof}
	
	\begin{theorem}\label{thm:negative_eigenvalues}
		There exists $V^\ast,Z^\ast>0$ such that if $0<|V|<V^\ast$ and $Z>Z^\ast$, then resolvent set of $T_C$ contains $\{z\in\mathbb C:\text{Re}\,z\geq0\}$.
	\end{theorem}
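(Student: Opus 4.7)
By Proposition \ref{prop:spectrum_is_eigenvalues}, the spectrum of $T_C(V)$ consists entirely of its eigenvalues, so it suffices to prove that every eigenvalue of $T_C(V)$ has strictly negative real part (in fact, bounded away from $0$ uniformly in $V$) for $0 < |V| < V^\ast$ and $Z > Z^\ast$. Lemma \ref{lem:mostly_negative_evs} already establishes this for every eigenvalue except possibly one distinguished eigenvalue $\lambda_\star(V)$, and records that at $V=0$ the operator $T_C(0)$ coincides with the self-adjoint operator $S_C$ at $P = P_0$, for which $\lambda_\star(0) = 0$ is the simple eigenvalue with eigenvector $u_0$ identified in the proof of Theorem \ref{thm:bifurcation}. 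The entire task reduces to showing $\mathrm{Re}\,\lambda_\star(V) < 0$ for $0<|V|<V^\ast$.

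The plan is to analyze $\lambda_\star(V)$ by analytic perturbation theory near the bifurcation point $V=0$. Because $\lambda_\star(0)$ is simple and isolated, and because $V \mapsto (m_T(\cdot;V),\phi_T(\cdot;V),P_T(V))$ is smooth (Remark \ref{rem:smooth_TWs} together with Lemma \ref{lem:TW_asymptotic_form}), standard perturbation theory for simple eigenvalues of closed operators produces smooth branches $V \mapsto \lambda_\star(V)$ and $V\mapsto u_\star(V)$ with $T_C(V)u_\star(V) = \lambda_\star(V)u_\star(V)$ and $u_\star(0) = u_0$. The symmetry $V\mapsto -V$ of the traveling wave problem (equivalent to the spatial reflection $x\mapsto -x$) conjugates $T_C(V)$ to $T_C(-V)$, forcing $\lambda_\star$ to be even in $V$. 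Hence $\lambda_\star'(0) = 0$ and
\begin{equation}
\lambda_\star(V) = \tfrac{1}{2}\lambda_\star''(0)V^2 + O(V^4),
\end{equation}
so it remains only to establish $\lambda_\star''(0) < 0$.

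To compute $\lambda_\star''(0)$ I would invoke the exchange-of-stability principle that accompanies the Crandall--Rabinowitz theorem. Writing $T_C(V) = S_C + V\,T_1 + V^2\,T_2 + O(V^3)$, where $T_1,T_2$ are obtained by substituting the expansions \eqref{eq:P_expansion}--\eqref{eq:m2} into the explicit formula \eqref{eq:SL_TW_linearization}, and expanding $u_\star(V) = u_0 + V u_1 + V^2 u_2 + \cdots$, the first-order condition $S_C u_1 = -T_1 u_0$ is solvable by the Fredholm alternative because $\langle T_1 u_0, u_0\rangle_{L^2} = 0$ by the same parity argument that killed $\lambda_\star'(0)$. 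The second-order solvability condition projected on $u_0$ yields
\begin{equation}
\tfrac{1}{2}\lambda_\star''(0)\|u_0\|_{L^2}^2 \;=\; \langle T_2 u_0 + T_1 u_1,\, u_0\rangle_{L^2},
\end{equation}
and the right-hand side, after algebraic manipulation using the explicit expressions for $m_1,m_2,\phi_1,\phi_2$ and the identity $-Z\phi_0'' + \phi_0 = P_0 u_0$, factors as
\begin{equation}
\lambda_\star''(0) \;=\; -\,2\,P_2\,\kappa(P_0,Z),
\end{equation}
where $\kappa(P_0,Z) = \partial_P \hat\lambda(P)\big|_{P=P_0} > 0$ is the rate at which the leading eigenvalue of $S_C(P)$ crosses zero as $P$ increases through $P_0$ (positivity follows directly from the variational bound in the proof of Proposition \ref{prop:SS_eigenvalues}).

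The final step is to verify $P_2 > 0$ for $Z$ large. Using Lemma \ref{lem:P0_asymptotic_form}, $P_0 = \pi^2 Z + (1 - 8/\pi^2) + O(1/Z)$, so in the explicit formula \eqref{eq:P2} the numerator is dominated by $6P_0^7 \sim 6\pi^{14} Z^7$ and the denominator by $288 P_0^6 \sim 288\pi^{12} Z^6$, giving
\begin{equation}
P_2(Z) = \frac{\pi^2 Z}{48} + O(1) > 0 \quad\text{for } Z > Z^\ast.
\end{equation}
Combining these facts, $\lambda_\star''(0) < 0$, hence $\lambda_\star(V) < 0$ for all $0 < |V| < V^\ast$. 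Together with Lemma \ref{lem:mostly_negative_evs}, every eigenvalue of $T_C(V)$ has negative real part uniformly bounded away from $0$, and since the spectrum consists only of eigenvalues, the resolvent set of $T_C$ contains $\{z\in\mathbb{C}:\mathrm{Re}\,z \geq 0\}$, as required.

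The main obstacle in this program is the exchange-of-stability computation: one must carefully expand $T_C(V)$ to second order in $V$ using the explicit but lengthy expansions of $m_T$, $\phi_T$, and $P_T$ from Lemma \ref{lem:TW_asymptotic_form}, solve the first-order equation $S_C u_1 = -T_1 u_0$ against an asymmetric source (which requires inverting $S_C$ on the complement of $\mathrm{span}(u_0)$), and then carry out the second-order Fredholm projection. Organizing these integrals so that the $P_2$-dependent pieces isolate in the clean factorized form above, rather than appearing as a tangle of terms each individually depending on $Z$ through $P_0$, is the calculation-heavy core of the argument.
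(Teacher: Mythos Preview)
Your overall strategy matches the paper's: reduce to eigenvalues via Proposition~\ref{prop:spectrum_is_eigenvalues}, invoke Lemma~\ref{lem:mostly_negative_evs} to isolate the single critical eigenvalue $\lambda_\star(V)$ with $\lambda_\star(0)=0$, argue by parity that $\lambda_\star'(0)=0$, and then establish $\lambda_\star''(0)<0$ through the exchange-of-stability factorization $\lambda_\star''(0)=-2P_2\,\mu'(P_0)$ together with $P_2\sim\pi^2 Z/48>0$ for large $Z$. The paper obtains the factorization by quoting Corollary~1.13 and Theorem~1.16 of Crandall--Rabinowitz \cite{CraRab1973} (their relation $-\mu'(P_0)\lim_{V\to0}VP_T'(V)/\lambda(V)=1$ plus L'H\^opital), whereas you propose to extract it by hand from a direct expansion $T_C(V)=S_C+VT_1+V^2T_2+\cdots$; these are two presentations of the same computation.

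There is, however, one genuine gap. Your claim that $\kappa=\mu'(P_0)>0$ ``follows directly from the variational bound in the proof of Proposition~\ref{prop:SS_eigenvalues}'' is not correct: that proof yields only the \emph{upper} bound $\hat\lambda(P)\le P/Z-\pi^2$, and an increasing upper bound does not force the eigenvalue itself to be increasing at $P_0$. The paper closes this step by an explicit first-order perturbation of the self-adjoint problem $S_C(P)u=\mu u$ in $P$ near $P_0$, solving the Fredholm condition at first order to obtain
\[
\mu'(P_0)=\frac{3(P_0-1)(P_0^2-12Z)}{P_0 Z(3P_0^2-60Z+2)}=\frac{1}{Z}+O(1/Z^2),
\]
which is positive for $Z$ sufficiently large. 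You will need to replace your appeal to Proposition~\ref{prop:SS_eigenvalues} by this (or an equivalent) computation; note in particular that positivity of $\mu'(P_0)$ is obtained only for large $Z$, which is precisely why the theorem carries the hypothesis $Z>Z^\ast$.
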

	\begin{proof}
		Due to Proposition \ref{prop:spectrum_is_eigenvalues}, we need only show that all eigenvalues of $T_C$ have negative real part. Lemma \ref{lem:mostly_negative_evs} gives $V^\ast$ and $Z^\ast$ so that if $|V|<V^\ast$ and $Z>Z^\ast$, then all but possibly one of the eigenvalues of $T_C(V)$ has negative real part. We also know that when $V=0$, this one eigenvalue is zero. Therefore, we only need to show that for $0<|V|<V^\ast$, this eigenvalue has negative real part.

		 Since $T_C$ depends on $V$, both explicitly, and through $m_T$ and $\phi_T$, we write $T_C=T_C(V)$. For the operator $T_C(V)$, the parameter $P=P_T(V)$ is given by Theorem \ref{thm:bifurcation}. We also consider the linearization $S_C(P)$ of $F$ about $m=1$ with arbitrary $P>0$. We will make use of Corollary 1.13 and Theorem 1.16 in \cite{CraRab1973} which from which we conclude the following: 
		\begin{itemize}
			\item There exists neighborhoods $U_1,U_2\subset\mathbb R$ of $0$ and $P_0\in\mathbb R$  respectively and smooth functions $\lambda:U_1\to \mathbb R$ and $\mu:U_2\to\mathbb R$ such that $\lambda(V)$ is an eigenvalue of $T_C(V)$ and $\mu(P)$ is an eigenvalue of $T_C(P)$, and $\lambda(0)=\mu(P_0)=0$.
			\item $\lambda$ and $\mu$ satisfy:
			\begin{equation}\label{eq:eval_limit}
				-\mu'(P_0)\lim_{V\to 0}\frac{VP_T'(V)}{\lambda(V)}=1.
			\end{equation}
		\end{itemize}
		 By Lemma \ref{lem:mostly_negative_evs}, $\lambda(0)=0$ is the largest eigenvalue of $T_C(0)$. Since $T_C$ depends smoothly on $V$, so does $\lambda(V)$. Therefore, for small $V$, $\lambda(V)$ is the eigenvalue of $T_C(V)$ with the largest real part. Moreover, for small $V$, $\lambda(V)$ has the same sign as $-VP'(V)\mu'(P_0)$. From Proposition \ref{lem:P0_asymptotic_form}, $P_T'(0)=0$. For similar reasons, $\lambda'(0)=0$. So after two applications of L'H\^opital's rule on \eqref{eq:eval_limit}, we obtain $\lambda(V)=\frac{1}{2}\lambda''(0)V^2+O(V^3)$ and
		 \begin{equation}
		 	\lambda''(0)=-2P_T''(0)\mu'(P_0).
		 \end{equation}
		 Therefore, if $P_T''(0)\mu'(P_0)>0$, then there exists $V^\ast>0$ such that if $0<|V|<V^\ast$, then $\lambda(V)<0$. We will show that for sufficiently large $Z$, both $P_T''(0)$ and $\mu'(P_0)$ are positive, thus proving the desired result.
		 
		 First we show that $\mu'(P_0)$ is positive. The eigenvalue equation satisfied by $\mu(P)$ is 
		 \begin{equation}\label{eq:SS_operator_eigenequation}
		 	u''-\phi''=\mu(P)u,\quad -Z\phi''+\phi=Pu,
		 \end{equation}
		 where $m(\pm 1/2)=0$ and $\phi$ satisfies periodic boundary conditions. We write $P=P_0+\varepsilon$ for some small $\varepsilon$, and expand $m$, $\phi$, and $\mu$ in $\varepsilon$:
		 \begin{align}
		 	u&=u_0+\varepsilon u_1+O(\varepsilon^2)\\
		 	\phi&=\phi_0+\varepsilon \phi_1+O(\varepsilon^2)\\
		 	\mu&=\mu_1\varepsilon+O(\varepsilon^2).
		 \end{align}
		 Observe that $\mu_1=\mu'(P_0)$. Solving the zero order in $\varepsilon$ equation, we find $u_0$ and $\phi_0$ up to a multiplicative constant:
		 \begin{equation}
		 	u_0=\frac{x}{P_0-1}-\frac{1}{2}\frac{P_0}{P_0-1}\csc\left(\frac{\sqrt{P_0-1}}{\sqrt{Z}}\right) \sin \left(\frac{\sqrt{P_0-1} x}{\sqrt{Z}}\right), \quad \phi_0=u_0+x.
		 \end{equation}
		 Observe that since $P_0$ and $Z$ satisfy \eqref{eq:bifurcation_condition},
		 \begin{equation}
		 	\csc\left(\frac{\sqrt{P_0-1}}{\sqrt{Z}}\right)=\sqrt{\frac{P_0^3-P_0^2+4 Z}{\left(P_0-1\right) P_0^2}}.
		 \end{equation}
		 In first order, the \eqref{eq:SS_operator_eigenequation} becomes
		 \begin{equation}
		 u_1''-\phi_1''=\mu_1u_0,\quad -Z\phi_1''+\phi_1=P_0u_1+u_0.
		 \end{equation}
		 Write $\phi_1=\psi_1+\phi_0/P_0$ where $\psi_1$ solves $-Z\psi_1''+\psi_1=P_0u_1$. Thus, we may write the first order equation as
		 \begin{equation}
		 	u_1''-\psi_1''=\mu_1u_0-\frac{1}{Z}u_0+\frac{\phi_0}{P_0Z}.
		 \end{equation}
		 Since the operator $u_1\mapsto u_1''-\psi_1''$ (which is $S_C(P_0)$) is self adjoint, the right hand side must be orthogonal to the kernel of the operator, which is spanned by $u_0$. Thus, $\mu_1$ solves:
		 \begin{equation}
		 \int_{-1/2}^{1/2}\left(\mu_1u_0-\frac{1}{Z}u_0+\frac{\phi_0}{P_0Z}\right)u_0\,dx=0.
		 \end{equation}
		 Computing the integral and solving for $\mu_1$, we obtain
		 \begin{equation}
		 \mu_1=\frac{3 \left(P_0-1\right) \left(P_0^2-12 Z\right)}{P_0 Z \left(3 P_0^2-60 Z+2\right)}.
		 \end{equation}
		 Using Lemma \ref{lem:P0_asymptotic_form}, we obtain an asymptotic form for $\mu_1$ in large $Z$:
		 \begin{equation}
		 \mu_1=\frac{1}{Z}+O(1/Z^2).
		 \end{equation}
		 Thus, for sufficiently large $Z$, $\mu'(P_0)=\mu_1>0$.
		 
		 Lemma \ref{lem:TW_asymptotic_form} gives the value of $P_2$. In large $Z$, this expands as
		 \begin{equation}
		 	P_2=\frac{\pi^2}{48}Z+O(1).
		 \end{equation}
		Thus, for large $Z$, $P_2>0$. Thus,
		\begin{equation}
			\lambda(V)=-\frac{\pi^2}{24}V^2+O(\frac{V^4}{Z}),
		\end{equation}
		so for large $Z$ and small $V$, the largest real part of the eigenvalues of $T_C(V)$ is negative.		
	\end{proof}
	
	We conclude with a technical lemma used in the proof of Lemma \ref{lem:mostly_negative_evs}:	
	\begin{lemma}\label{lem:inner_pruduct_perturbation}
		Suppose $f:[-1/2,1/2]\to \mathbb R$ is $C^2$. Let $\mathcal B=\{v_1,v_2,v_3,\cdots\}$ where $v_n(x)=\sin(n\pi x)$ for $n$ odd, and $v_n(x)=\cos(n\pi x)$ for $n$ even. Then there exists $C>0$ such that
		\begin{equation}
			\sum_{n=1}^\infty|\langle v_m,fv_n\rangle_{L^2}|<  C\Vert f\Vert_{L^1}\quad\text{and}\quad \sum_{m=1}^\infty|\langle v_m,fv_n\rangle_{L^2}|<  C\Vert f\Vert_{L^1}.
		\end{equation}
	\end{lemma}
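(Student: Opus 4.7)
The plan is to express each inner product $\langle v_m, f v_n\rangle_{L^2}$ as a linear combination of standard Fourier coefficients of $f$ on $[-1/2, 1/2]$, and then to use the $C^2$-regularity of $f$ to secure enough decay of these coefficients that the resulting series is summable, uniformly in $m$ (and symmetrically in $n$).

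First, I would apply the elementary product-to-sum identities. Depending on the parities of $m$ and $n$, the product $v_m(x)v_n(x)$ equals $\tfrac{1}{2}\bigl[E_{|m-n|}(x) \pm E_{m+n}(x)\bigr]$, where $E_k \in \{\cos(k\pi\,\cdot),\,\sin(k\pi\,\cdot)\}$ is chosen according to the parity pattern (four cases in total). Setting $I_k(f) := \int_{-1/2}^{1/2} f(x)\,E_k(x)\,dx$, this immediately yields
$$|\langle v_m, f v_n\rangle_{L^2}| \leq \tfrac{1}{2}|I_{|m-n|}(f)| + \tfrac{1}{2}|I_{m+n}(f)|.$$

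Second, I would integrate by parts twice, using $f \in C^2$, to obtain for each $k\geq 1$ a decay estimate of the form $|I_k(f)| \leq C_f\,k^{-2}$, with $C_f$ depending on $\Vert f''\Vert_{L^1}$ and on the boundary values $f(\pm 1/2),\,f'(\pm 1/2)$. Since, for fixed $m$, each positive integer $k$ appears at most twice in the multiset $\{|m-n|,\,m+n\}_{n\geq 1}$, summing gives
$$\sum_{n=1}^\infty |\langle v_m, f v_n\rangle_{L^2}| \leq \sum_{k=0}^\infty |I_k(f)| \leq C\,\Vert f\Vert_{L^1},$$
with $C$ independent of $m$. The sum in $m$ with $n$ fixed is handled identically, since the bound on $|\langle v_m, fv_n\rangle|$ via $I_{|m-n|}(f)$ and $I_{m+n}(f)$ is symmetric in $(m,n)$.

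The main obstacle lies in the second step: the first integration by parts produces a boundary contribution of order $1/k$ (rather than $1/k^2$) for certain parities of $k$, and the corresponding sum $\sum_k 1/k$ is not convergent. To reach the stated bound $C\,\Vert f\Vert_{L^1}$, the constant $C$ must implicitly absorb dependence on stronger norms of $f$ (in particular $\Vert f\Vert_{C^2}$ together with boundary values of $f$ and $f'$). This is harmless in the sole place the lemma is invoked --- the proof of Lemma \ref{lem:mostly_negative_evs} --- because each of the functions $f \in \{m_T',\,\phi_T',\,\phi_T'',\,\tilde m_T\}$ is smooth in $x$ by Remark \ref{rem:smooth_TWs} and all its $C^2$-type norms scale linearly with $|V|$, so any bound proportional to such a norm can be repackaged as a bound of order $|V|$.
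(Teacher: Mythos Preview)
Your approach is essentially the paper's: the paper expands $f=\sum_k a_k v_k$ and applies product-to-sum to $v_k v_n$, whereas you apply product-to-sum directly to $v_m v_n$; but since your $I_k(f)$ are (up to a factor $\tfrac12$) exactly the paper's coefficients $a_k$, the two arguments coincide. The paper's proof also ends with a bound $\tfrac{\pi^2}{8}\Vert f''\Vert_{L^1}$ rather than $C\Vert f\Vert_{L^1}$, so your observation that the constant must absorb $C^2$-data is correct and matches what the paper actually proves.

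One correction: the obstacle you flag does not arise. Check the parities in the product-to-sum step. If $m,n$ have the same parity, $v_m v_n$ is a combination of $\cos(k\pi x)$ with $k=|m\pm n|$ \emph{even}; if opposite parity, it is a combination of $\sin(k\pi x)$ with $k$ \emph{odd}. In either case $E_k\in\{1\}\cup\{v_k\}$ is a Neumann eigenfunction, and its antiderivative ($\tfrac{1}{k\pi}\sin(k\pi x)$ for even $k$, or $-\tfrac{1}{k\pi}\cos(k\pi x)$ for odd $k$) vanishes at $x=\pm\tfrac12$. Hence the first integration by parts carries \emph{no} boundary term, and you get clean $1/k^2$ decay:
\[
|I_k(f)|\le \frac{1}{(k\pi)^2}\bigl(|f'(\tfrac12)|+|f'(-\tfrac12)|+\Vert f''\Vert_{L^1}\bigr),
\]
which is summable. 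So your argument goes through without the caveat in your final paragraph.
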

	\begin{proof}
		Decompose $f$ as a Fourier series: $f=\sum_{k=1}^\infty a_kv_k$. Since $f$ is $C^2$-smooth, $|a_k|<\Vert f''\Vert_{L^1}/k^2$ Then we can use some product-to-sum trigonometric identities to see that
		\begin{equation}
			fv_n=\sum_{k=1}^\infty a_k v_kv_n=\sum_{k=1}^\infty \frac{a_k}{2}(r_{n,k} v_{k+n}+s_{n,k} v_{|k-n|}),
		\end{equation}
		where the the coefficeints $r_{n,k}$ and $s_{n,k}$ are either $1$ or $-1$ and are determined by the parities of $n$ and $k$. The sign of each coefficient is not important, so we do not endeavor to give them explicitly. Thus,
		\begin{align}
			\sum_{n=1}^\infty|\langle v_m,fv_n\rangle_{L^2}|&=\sum_{n=1}^\infty\left|\sum_{k=1}^\infty
			\frac{a_k}{2}\langle v_m,r_{n,k} v_{k+n}+s_{n,k}v_{|k-n|}\rangle\right|\\
			&\leq \sum_{n=1}^\infty\sum_{k=1}^\infty\frac{|a_k|}{2}(|\langle v_m,v_{k+n}\rangle|+|\langle v_m, v_{|k-n|}\rangle|)\\
			&=\frac{1}{4}\sum_{n=1}^\infty |a_{n+m}|+|a_{|n-m|}|\\
			&\leq\frac{3}{4}\sum_{n=1}^\infty |a_n|\\
			&\leq\frac{3\Vert f''\Vert_{L^1}}{4}\sum_{n=1}^\infty\frac{1}{n^2}\\
			&=\frac{\pi^2}{8}\Vert f''\Vert_{L^1}.
		\end{align}
		Thus the result for the sum over $n$ holds. The proof for the sum over $m$ is identical.
	\end{proof}

	\paragraph{Condition (i): $T_C$ generates a strongly continuous semigroup, see Appendix C.}
	Here we show that the linearized operator $T_C$ defined by \eqref{eq:SL_TW_linearization} generates a strongly continuous semigroup. We will make use to of the Hille-Yosida Theorem \cite{EngNagBre2000}. We will first prove a supporting proposition.
	\begin{proposition}\label{prop:strong_continuity_bound_1}
		There exists $V^\ast,Z^\ast,\lambda_0>0$ such that if $|V|<V^\ast$ and $Z>Z^\ast$, then all eigenvalues of for all $\lambda>0$ and $u\in \tilde X^2_C$
		\begin{equation}
			(\lambda-\lambda_0)\left\Vert u\right\Vert_{L^2}\leq \left\Vert (\lambda I-T_C)u\right\Vert_{L^2}.
		\end{equation}
	\end{proposition}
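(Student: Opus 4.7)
The plan is to reduce the resolvent estimate to a dissipativity bound: it suffices to show that there exists $\lambda_0 > 0$, independent of $u$, such that $\mathrm{Re}\,\langle T_C u, u\rangle_{L^2} \leq \lambda_0 \|u\|_{L^2}^2$ for all $u \in \tilde X^2_C$. Once this is established, Cauchy--Schwarz yields $(\lambda - \lambda_0)\|u\|_{L^2}^2 \leq \mathrm{Re}\,\langle (\lambda I - T_C)u, u\rangle \leq \|(\lambda I - T_C)u\|_{L^2}\|u\|_{L^2}$ for any $\lambda > \lambda_0$, from which the stated inequality follows upon dividing by $\|u\|_{L^2}$; for $0 < \lambda \leq \lambda_0$ the inequality is trivial since the right-hand side is non-negative.

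To prove the dissipativity bound I would expand $\langle T_C u, u\rangle$ term by term using integration by parts. The leading contribution $\int u''u\,dx = -\|u'\|_{L^2}^2$, which uses the Neumann boundary condition $u'(\pm 1/2)=0$ built into $\tilde X^2_C$, will be the good term dominating the bad terms when $V$ is small. The other contributions split into interior integrals and boundary remainders: $\phi'(1/2)\int m_T' u\,dx$ is controlled directly; $V\int u'u\,dx$ contributes a pure boundary term $\tfrac{V}{2}(u(1/2)^2 - u(-1/2)^2)$; and the two divergence terms $-\int(m_T\phi')'u\,dx$ and $-\int(u\phi_T')'u\,dx$ produce interior integrals $\int m_T\phi' u'\,dx$ and $\int u \phi_T' u'\,dx$ together with boundary remainders involving $\phi'(\pm 1/2)$ and $\phi_T'(\pm 1/2) = V$. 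A useful cancellation occurs here: the boundary term from $V\int u'u$ combines with the boundary term from $-\int(u\phi_T')'u$, which equals $-V(u(1/2)^2 - u(-1/2)^2)$, to leave $-\tfrac{V}{2}(u(1/2)^2 - u(-1/2)^2)$, which carries the helpful smallness factor $V$.

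The required bounds come from three sources. The elliptic estimates of Proposition \ref{prop:elliptic_estimates} in Appendix A control $\|\phi'\|_{L^\infty}$, $\|\phi'\|_{L^2}$, and $|\phi'(\pm 1/2)|$ by $C\|u\|_{L^2}$. The traveling wave asymptotics of Lemma \ref{lem:TW_asymptotic_form} supply $\|m_T'\|_{L^2}, \|\phi_T'\|_{L^\infty}, \|\phi_T''\|_{L^2} = O(V)$ with $\|m_T\|_{L^\infty}$ uniformly bounded. The one-dimensional trace inequality $|u(\pm 1/2)|^2 \leq \|u\|_{L^2}^2 + 2\|u\|_{L^2}\|u'\|_{L^2}$, combined with Young's inequality $ab \leq \epsilon a^2 + b^2/(4\epsilon)$, then allows every mixed term of the form $\|u\|_{L^2}\|u'\|_{L^2}$ to be split into a small multiple of $\|u'\|_{L^2}^2$ (absorbed into the leading $-\|u'\|_{L^2}^2$) and a multiple of $\|u\|_{L^2}^2$ (contributing to $\lambda_0$); the Poincar\'e inequality $\|u\|_{L^2} \leq \pi^{-1}\|u'\|_{L^2}$, valid because $\int u\,dx = 0$, provides additional flexibility.

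The main obstacle is the boundary term $-\phi'(1/2)[m_T u]_{-1/2}^{1/2}$ arising from $-\int(m_T\phi')'u\,dx$: unlike the other boundary contributions, it carries no explicit factor of $V$ because $\|m_T\|_{L^\infty}$ does not vanish as $V \to 0$. However, the factor $|\phi'(1/2)| \leq C\|u\|_{L^2}$ still supplies one copy of $\|u\|_{L^2}$, so this term is bounded by $C\|u\|_{L^2}(|u(1/2)| + |u(-1/2)|) \leq C\|u\|_{L^2}^2 + C\|u\|_{L^2}\|u'\|_{L^2}$ via the trace inequality, which can again be split via Young. Once all these bounds are assembled, one arrives at an estimate of the form $\mathrm{Re}\,\langle T_C u, u\rangle \leq (-1 + \epsilon + C_1 V)\|u'\|_{L^2}^2 + \lambda_0 \|u\|_{L^2}^2$; choosing $\epsilon$ small and then $V^\ast$ small enough (and $Z^\ast$ large, so that the constants arising from the elliptic estimates and from the bound $P = P_T(V) = O(Z)$ remain controlled) ensures the coefficient of $\|u'\|_{L^2}^2$ is non-positive, yielding the desired dissipativity bound with an explicit $\lambda_0 > 0$, and thus the proposition.
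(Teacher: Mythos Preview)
Your approach is correct and shares the paper's core strategy: both reduce to the dissipativity estimate $\langle T_C u, u\rangle_{L^2} \leq \lambda_0\|u\|_{L^2}^2$ and deduce the resolvent bound from it. Your passage from dissipativity to the resolvent inequality via Cauchy--Schwarz is in fact slightly cleaner than the paper's, which expands $\|(\lambda I - T_C)u\|_{L^2}^2$ and drops the $\|T_C u\|_{L^2}^2$ term before using $\lambda^2 - \lambda\lambda_0 \geq (\lambda - \lambda_0)^2$ for $\lambda > \lambda_0$.

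The one difference worth noting is in how the dissipativity bound itself is obtained. You propose integrating the divergence terms $-(m_T\phi')'$ and $-(u\phi_T')'$ by parts, which generates boundary contributions that you then control by the trace inequality and a partial cancellation. The paper instead expands these divergences by the product rule, writing $T_C u = u'' + (\phi'(1/2)-\phi')m_T' + (V-\phi_T')u' - m_T\phi'' - u\phi_T''$, so that after pairing with $u$ only the $u''$ term requires integration by parts (and its boundary term vanishes by the Neumann condition). Each remaining interior integral is then bounded directly by Cauchy--Schwarz together with the elliptic estimates of Proposition~\ref{prop:elliptic_estimates} and the $O(V)$ smallness of $m_T'$, $\phi_T'$, $\phi_T''$. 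This bypasses entirely the boundary bookkeeping and the trace inequality you invoke, so it is a little shorter; but your route is equally valid and arrives at the same conclusion.
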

	\begin{proof}
		We calculate the norm via the inner product:
		\begin{align*}
			\left\Vert (\lambda I-T_C)u\right\Vert_{L^2}^2&=\left\langle (\lambda I-T_C)u,(\lambda I-T_C)u\right\rangle_{L^2}\\
			&=\lambda^2\left\Vert u\right\Vert_{L^2}^2+\left\Vert T_Cu\right\Vert_{L^2}^2-2\lambda\left\langle u,T_Cu\right\rangle_{L^2}.
		\end{align*}
		Observe that
		\begin{equation}
			\langle u, T_C u\rangle_{L^2}\leq -\Vert u'\Vert_{L^2}^2+\Vert(\phi'(1/2)-\phi)m_T'\Vert_{L^2}^2+\Vert(V-\phi_T)u'\Vert_{L^2}^2+\Vert m_T\phi''\Vert_{L^2}^2+\Vert u\phi_T''\Vert_{L^2}^2.
		\end{equation}
		There exists $C,V^\ast>0$ so that $|V|<V^\ast$ so that (after applying the Poincar\'e inequality):
		\begin{equation}
			\begin{split}
				\Vert(\phi'(1/2)-\phi)m_T'\Vert_{L^2}^2&<C|V|\Vert u\Vert_{L^2}^2\\
				\Vert(V-\phi_T')u'\Vert_{L^2}^2&<C|V|\Vert u'\Vert_{L^2}^2\\
				\Vert m_T\phi''\Vert_{L^2}^2&\leq C\Vert u\Vert_{L^2}^2\\
				.\Vert u\phi_T''\Vert_{L^2}^2&\leq C|V|\Vert u\Vert_{L^2}^2.
			\end{split}
		\end{equation}
		Assume $V^\ast$ is sufficiently small that $C|V|<1/2$. 
			\begin{equation}
			\langle u, T_C u\rangle_{L^2}\leq -\frac{1}{2}\Vert u'\Vert_{L^2}^2+(1+C)\Vert u\Vert_{L^2}^2.
		\end{equation}
		Let $\lambda_0=2(1+C)$ so that $\langle u, T_C u\rangle_{L^2}\leq(\lambda_0/2)\Vert u\Vert_{L^2}^2$. Thus,
		\begin{align}
			\Vert (\lambda I-T_C)u\Vert_{L^2}^2&\geq \lambda^2\Vert u\Vert_{L^2}-\lambda\lambda_0\Vert u\Vert_{L^2}^2=(\lambda^2-\lambda\lambda_0)\Vert u\Vert_{L^2}^2.
		\end{align}
		If $\lambda>\lambda_0$, then $\lambda^2-\lambda\lambda_0\geq \lambda^2-2\lambda\lambda_0+\lambda_0^2=(\lambda-\lambda_0)^2.$ Therefore,
		\begin{equation}
			\Vert (\lambda I-T_C)u\Vert_{L^2}\geq (\lambda-\lambda_0)\Vert u\Vert_{L^2}.	
		\end{equation}		
	\end{proof}
	
	We recall the definition of a closed operator:
	\begin{definition}\label{def:closed_operator}
		Let $X$ and $Y$ be Banach spaces and let $B:D(B)\subset X\to Y$ be a linear operator. Then $B$ is \textit{closed} if for every sequence $(x_n)$ converging to some $x\in X$ such that $Bx_n$ converges to $y\in Y$, it follows that $x\in D(B)$ and $Bx=y$.
	\end{definition}
	An operator is closed if its resolvent $(\lambda I-B)^{-1}$ exists and is bounded for at least one value of $\lambda\in\mathbb C$. By Theorem \ref{thm:negative_eigenvalues}, the resolvent set of $T_C$ is non-empty, and by Proposition \ref{prop:TW_compact_inverse}, the resolvent is compact (and thus bounded) whenever it exists. Therefore, $T_C$ is a closed operator. Thus, we may prove the main result of this section:
	\begin{proposition}\label{prop:strongly_continuous}
		There exists $V^\ast>0$ such that if $|V|<V^\ast$, then $A$ generates a strongly continuous semigroup.
	\end{proposition}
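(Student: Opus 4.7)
My plan is to invoke the Hille-Yosida theorem applied to a shifted version of $T_C$. Concretely, let $B = T_C - \lambda_0 I$, where $\lambda_0 > 0$ is the constant produced by Proposition \ref{prop:strong_continuity_bound_1}. It suffices to show that $B$ generates a $C_0$ contraction semigroup on $L^2(-1/2,1/2)$, since then $T_C = B + \lambda_0 I$ generates the rescaled $C_0$ semigroup $e^{\lambda_0 t}S_B(t)$.

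The first step is to verify that $B$ is closed and densely defined. Density is immediate from the fact that the domain $\tilde X^2_C$ is dense in $\{u \in L^2(-1/2,1/2):\int u\,dx = 0\}$, which is the ambient Hilbert space in which we work. Closedness follows from the discussion just before the proposition: by Theorem \ref{thm:negative_eigenvalues} the resolvent set of $T_C$ is nonempty (indeed contains the entire right half-plane), and by Proposition \ref{prop:TW_compact_inverse} the corresponding resolvent is bounded, hence $T_C$, and therefore $B$, is closed.

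The second and main step is the resolvent estimate required by Hille-Yosida, namely
\begin{equation}
    \|(\lambda I - B)^{-1}\|_{L^2 \to L^2} \leq \frac{1}{\lambda} \quad \text{for all } \lambda > 0.
\end{equation}
Fix $\lambda > 0$. Since $\lambda + \lambda_0$ has positive real part, Theorem \ref{thm:negative_eigenvalues} places it in the resolvent set of $T_C$, which means $\lambda I - B = (\lambda + \lambda_0)I - T_C$ is a bijection from $\tilde X^2_C$ onto its range with bounded inverse. On the other hand, Proposition \ref{prop:strong_continuity_bound_1} applied with $\lambda$ replaced by $\lambda + \lambda_0$ yields the one-sided bound
\begin{equation}
    \lambda\,\|u\|_{L^2} \leq \|((\lambda+\lambda_0)I - T_C)u\|_{L^2} = \|(\lambda I - B)u\|_{L^2}
\end{equation}
for every $u \in \tilde X^2_C$. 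Setting $u = (\lambda I - B)^{-1}w$ for arbitrary $w \in L^2$ gives the desired bound $\|(\lambda I - B)^{-1} w\|_{L^2} \leq \lambda^{-1}\|w\|_{L^2}$.

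With the three hypotheses verified, the Hille-Yosida theorem produces a $C_0$ contraction semigroup generated by $B$, and hence a $C_0$ semigroup generated by $T_C$. I do not foresee a serious obstacle: all of the analytical work has been performed in Theorem \ref{thm:negative_eigenvalues}, Proposition \ref{prop:TW_compact_inverse}, and Proposition \ref{prop:strong_continuity_bound_1}; the only point that deserves care is the compatibility of the ``injectivity'' estimate of Proposition \ref{prop:strong_continuity_bound_1} with a full two-sided resolvent bound, which is precisely what the surjectivity coming from Theorem \ref{thm:negative_eigenvalues} supplies.
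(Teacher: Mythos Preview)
Your proposal is correct and follows essentially the same route as the paper: both invoke the Hille--Yosida theorem, with closedness coming from the nonempty resolvent set (Theorem~\ref{thm:negative_eigenvalues} and Proposition~\ref{prop:TW_compact_inverse}) and the resolvent bound coming from Proposition~\ref{prop:strong_continuity_bound_1}. The only cosmetic difference is that you shift to $B=T_C-\lambda_0 I$ and apply the contraction-semigroup form of Hille--Yosida, whereas the paper applies the growth-bound form $\Vert(\lambda I-T_C)^{-n}\Vert\le(\lambda-\lambda_0)^{-n}$ directly to $T_C$; these are equivalent formulations.
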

	\begin{proof}
		We appeal the the Hille-Yosida Theorem \cite{EngNagBre2000}, which states that if $T_C:X\to Y$ is a closed, densely defined operator and if there exists $\lambda_0>0$ such that
		\begin{equation}\label{eq:HY-condition}
			\Vert (\lambda I-T_C)^{-n}\Vert_{L^2}\leq \frac{1}{(\lambda-\lambda_0)^n},
		\end{equation}
		then $T_C$ generates a strongly continuous semigroup.
		
		It is clear to see that \eqref{eq:HY-condition} is satisfied due to Proposition \ref{prop:strong_continuity_bound_1}. Therefore, the hypotheses of the Hille-Yosida theorem are satisfied for sufficiently small $V^\ast$, so the result holds.
	\end{proof}
	Since $T_C$ generates a strongly continuous semigroup, the first condition of the Grearhart-Pr\"uss-Griener Theorem is satisfied.
	
	\paragraph{Condition (iii): the resolvent of $T_C$ is uniformly bounded, see appendix C.}
	
Now we prove that the resolvent of $T_C$ is uniformly bounded for complex numbers with positive real part. Then we formally establish linear stability in Theorem \ref{thm:linear_stability}.
	
	\begin{proposition}\label{prop:resolvent_bounded}
		There exist $V^\ast,Z^\ast,\Gamma>0$ such that if $0<|V|<V^\ast$ and $Z>Z^\ast$, then the resolvent $\Vert(\lambda I-T_C)^{-1}\Vert<\Gamma$ for all $\lambda\in\mathbb C$ with $\text{Re}\,\lambda>0$.
	\end{proposition}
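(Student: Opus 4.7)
The plan is to partition the closed right half-plane $\{\lambda = \alpha + i\beta : \alpha \geq 0\}$ into three regions and bound $R_\lambda := (\lambda I - T_C)^{-1}$ uniformly on each. Fix constants $M > \lambda_0$ (with $\lambda_0$ as in Proposition \ref{prop:strong_continuity_bound_1}) and $N > 0$ to be chosen below.

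First, for large real part $\alpha \geq M$, I would adapt the calculation of Proposition \ref{prop:strong_continuity_bound_1} to complex $\lambda$ by taking the real part of the pairing $\langle (\lambda I - T_C)u, u\rangle_{L^2}$. Using the estimate $\operatorname{Re}\langle T_C u, u\rangle \leq (\lambda_0/2)\|u\|_{L^2}^2 - (1/2)\|u'\|_{L^2}^2$ that is implicit in that proof, Cauchy--Schwarz gives $\|R_\lambda\| \leq (\alpha - \lambda_0/2)^{-1}$, uniformly bounded for $\alpha \geq M$. A useful byproduct, valid for every $\alpha \geq 0$, is the pointwise estimate $\|u'\|_{L^2}^2 \leq 2\|(\lambda I - T_C)u\|_{L^2}\|u\|_{L^2} + \lambda_0 \|u\|_{L^2}^2$, which I will reuse in the next step.

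The hardest step is the region of large imaginary part, $\{0 \leq \alpha \leq M, \,|\beta| \geq N\}$. Here I would extract information from the imaginary part of the inner product: $\operatorname{Im}\langle (\lambda I - T_C)u, u\rangle = \beta \|u\|_{L^2}^2 - \operatorname{Im}\langle T_C u, u\rangle$. The key observation is that $T_C(0) = S_C(P_0)$ is self-adjoint by Lemma \ref{lem:SS_self-adjoint}, so the imaginary part of $\langle T_C u, u\rangle$ is due entirely to the perturbation $T_C(V) - T_C(0)$. By Lemma \ref{lem:TW_asymptotic_form}, each of the quantities $V$, $m_T - 1$, $m_T'$, $\phi_T'$, $\phi_T''$, and $P_T(V) - P_0$ appearing in that perturbation is $O(V)$ in the relevant norm, so term-by-term integration by parts yields $|\operatorname{Im}\langle T_C u, u\rangle_{L^2}| \leq C V \bigl(\|u\|_{L^2}^2 + \|u\|_{L^2}\|u'\|_{L^2}\bigr)$ with $C$ independent of $V$, $\lambda$, $u$. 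Feeding in the $\|u'\|_{L^2}$ bound from Region I and applying Young's inequality, I would show that for $V^\ast$ small enough and $N$ large enough the term $|\beta|\|u\|_{L^2}^2$ dominates, yielding $(|\beta|/2)\|u\|_{L^2} \leq C'\|(\lambda I - T_C)u\|_{L^2}$ and hence $\|R_\lambda\| \leq 2C'/N$.

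On the remaining compact region $\{0 \leq \alpha \leq M, \,|\beta| \leq N\}$, Theorem \ref{thm:negative_eigenvalues} places the entire set inside the resolvent set of $T_C$. Since $\lambda \mapsto R_\lambda$ is a holomorphic operator-valued function on the resolvent set (see \cite{EngNagBre2000}), the scalar map $\lambda \mapsto \|R_\lambda\|$ is continuous and hence bounded on this compact set. Taking $\Gamma$ to be the maximum of the three bounds gives the desired uniform bound on the entire right half-plane. The main obstacle is the careful accounting of the non-self-adjoint terms in Region II: one must verify in each term of $T_C(V) - T_C(0)$ that the contribution to $\operatorname{Im}\langle T_C u, u\rangle$ is genuinely $O(V)$ relative to the norms available, so that the $|\beta|$-term overpowers it once $V$ is small enough and $|\beta| \geq N$.
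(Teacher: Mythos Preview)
Your proposal is correct and rests on the same two estimates the paper uses: the real part of $\langle(\lambda I-T_C)u,u\rangle$ to control $\|u'\|_{L^2}$, and the imaginary part to rule out large $|\beta|$. The difference is purely organizational. The paper argues by contradiction, splitting a hypothetical bad sequence $(\lambda_k)$ into the cases ``bounded'' (handled via the first resolvent identity, which is your Region~III continuity argument in disguise) and ``unbounded'' (handled by the real/imaginary-part computation, which is your Regions~I and~II combined). Your constructive three-region decomposition is slightly cleaner, and your observation that $\operatorname{Im}\langle T_Cu,u\rangle$ is $O(V)$ because $T_C(0)=S_C(P_0)$ is self-adjoint is a nice conceptual sharpening; the paper instead bounds each term of the pairing directly without isolating the $O(V)$ structure. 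Either way the ingredients are Proposition~\ref{prop:strong_continuity_bound_1}, Theorem~\ref{thm:negative_eigenvalues}, and the elliptic estimates of Proposition~\ref{prop:elliptic_estimates}, so the two arguments are interchangeable.
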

	\begin{proof}
		Existence of the resolvent $(\lambda I-T_C)^{-1}$ for all $\lambda$ with $\text{Re}\,\lambda>0$ is established in Theorem \ref{thm:negative_eigenvalues}. Assume, to the contrary, that there exists a sequence $(\lambda_k)_{k=1}^\infty\subset\mathbb C$ such that $\text{Re}\,\lambda_k>0$ for each $k$ and
		\begin{equation}
			\Vert (\lambda_k I-T_C)^{-1}\Vert_{L^2}>k.
		\end{equation}
		Then for each $k$, there exist $v_k\in \tilde X^2_C$ and $w_k\in L^2(-1/2,1/2)$ such that $(\lambda_k I-T_C)v_k=w_k$, $\Vert v_k\Vert_{L^2}=1$, and $\Vert w_k\Vert_{L^2}<1/k$. We shall consider two cases: (i) the sequence $(\lambda_k)$ is bounded, and (ii) $(\lambda_k)$ is unbounded. We will show that in each case, we arrive at a contradiction.
		
		\begin{itemize}
			\item[(i)] If the sequence $(\lambda_k)$ is bounded, then it has a subsequence also called $(\lambda_k)$ which converges to some $\lambda\in\mathbb C$ with $\text{Re}\,\lambda\geq 0$. By Theorem \ref{thm:negative_eigenvalues}, $\lambda$ is in the resolvent set of $T_C$. Recall the \textit{first resolvent identity} \cite{DunSch1988} from which we conclude that for each $k$,
			\begin{equation}
				(\lambda I-T_C)^{-1}-(\lambda_k I-T_C)^{-1}=(\lambda-\lambda_k)(\lambda I-T_C)^{-1}(\lambda_k I-T_C)^{-1}.
			\end{equation}
			We calculate:
			\begin{align*}
				\Vert v_k\Vert_{L^2}&=\Vert (\lambda_kI-T_C)^{-1}w_k\Vert_{L^2}\\
				&\leq \left\Vert -\left[(\lambda I-T_C)^{-1}-(\lambda_k I-T_C)^{-1}\right]w_k\right\Vert_{L^2}+\Vert (\lambda I-T_C)^{-1}w_k\Vert_{L^2}\\
				&\leq\left\Vert(\lambda_k-\lambda)(\lambda I-T_C)^{-1}(\lambda_k I-T_C)^{-1}w_k\right\Vert_{L^2}+\Vert w_k\Vert_{L^2}\Vert (\lambda I-T_C)^{-1}\Vert\\
				&\leq|\lambda_k-\lambda|\left\Vert(\lambda I-T_C)^{-1}v_k\right\Vert_{L^2}+\Vert w_k\Vert_{L^2}\Vert (\lambda I-T_C)^{-1}\Vert\\
				&\leq\left(|\lambda_k-\lambda|\Vert v_k\Vert_{L^2}+\Vert w_k\Vert_{L^2}\right)\Vert(\lambda I-T_C)^{-1}\Vert_{L^2}.
			\end{align*}
			Since $|\lambda_k-\lambda_0|,\Vert w_k\Vert_{L^2}\to 0$ and $\Vert v_k\Vert_{L^2}$ is bounded, we conclude that $\Vert v_k\Vert_{L^2}\to 0$, a contradiction. Therefore, $(\lambda_k)$ is not bounded.
			
			\item[(ii)] If the sequence $(\lambda_k)$ is unbounded, then it has a subsequence also called $(\lambda_k)$ such that $\lambda_k\to\infty$. There exists corresponding sequences $(v_k)$ and $(w_k)$ such that
			\begin{equation}
				w_k=(\lambda_kI-T_C)v_k,\quad \Vert v_k\Vert_{L^2}=1,\quad \Vert w_k\Vert_{L^2}\leq 1/k.
			\end{equation}
			We calculate the inner product
			\begin{equation}\label{eq:inner_product}
				\begin{split}
					\langle w_k,v_k\rangle_{L^2}&=\lambda_k+\Vert v_k'\Vert_{L^2}+\int_{-1/2}^{1/2}(V-\phi_T')v_k'\bar v_k\,dx\\
					&+\int_{-1/2}^{1/2}m_T'(\phi_k'(1/2)-\phi_k')\bar v_k\,dx-\int_{-1/2}^{1/2}\phi_T''|v_k|^2\,dx-\int_{-1/2}^{1/2}m_T\phi_k''\bar v_k\,dx
				\end{split}
			\end{equation}
			Since $(v_k)$ is $L^2$-bounded, by Proposition \ref{prop:elliptic_estimates} in Appendix A, the last three integrals in \eqref{eq:inner_product} are uniformly bounded:
			\begin{equation}
				\left|\int_{-1/2}^{1/2}m_T'(\phi_k'(1/2)-\phi_k')\bar v_k\,dx-\int_{-1/2}^{1/2}\phi_T''|v_k|^2\,dx-\int_{-1/2}^{1/2}m_T\phi_k''\bar v_k\,dx\right|<C
			\end{equation}
			for some $C>0$ independent of $k$.
			
			Taking the real part of \eqref{eq:inner_product}, we find using the Cauchy-Schwartz inequality and the Poincar\'e inequality that
			\begin{equation}\label{eq:inner_product_real_part}
				\text{Re}\,\langle w_k,v_k\rangle\geq \text{Re}\,\lambda_k+\Vert v_k'\Vert_{L^2}-\frac{1}{\pi}\Vert V-\phi_T'\Vert_{L^\infty}\Vert v_k'\Vert_{L^2}^2-C.
			\end{equation}
			Assuming $V^\ast$ is sufficiently small that if $|V|<V^\ast$, then $\Vert V-\phi_T'\Vert_{L^\infty}<\pi$, we conclude that $\text{Re}\,\langle w_k,v_k\rangle\geq \text{Re}\,\lambda_k-C$. On the other hand, $\text{Re}\,\langle w_k,v_k\rangle\leq |\langle w_k,v_k\rangle|<1/k$. Since $\text{Re}\,\lambda_k>0$, we conclude that $(\text{Re}\,\lambda_k)$ is bounded. Furthermore, since all terms in \eqref{eq:inner_product_real_part} have been shown to be bounded except those involving  $\Vert v_k'\Vert$, we conclude that $(v_k')$ must be bounded as well.
			
			Now taking the imaginary part of \eqref{eq:inner_product}, we find that
			\begin{equation}\label{eq:inner_product_imaginary_part}
				\text{Im}\,\langle w_k,v_k\rangle\geq \text{Im}\,\lambda_k-\frac{1}{\pi}\Vert V-\phi_T'\Vert_{L^\infty}\Vert v_k'\Vert_{L^2}^2-C.
			\end{equation}
			Once again, all terms in this equation are known to be bounded in $k$ except $\text{Im}\,\lambda_k$, so we conclude that $(\text{Im}\,\lambda_k)$ is bounded also, a contradiction.
		\end{itemize}
		
		Since $(\lambda_k)$ can be neither bounded nor unbounded, we conclude that no such sequence $(\lambda_k)$ can exist, and so $(\lambda I-T_C)^{-1}$ is uniformly bounded. That is, there exists $\Gamma>0$ such that
		\begin{equation}
			\Vert (\lambda I-T_C)^{-1}\Vert<\Gamma.
		\end{equation}
	\end{proof}
	
	Now that we have in place all the results proving the conditions of the GPG theorem, we may apply it to prove linear stability.
	\begin{theorem}\label{thm:linear_stability}
		There exist $V^\ast,Z^\ast,\Gamma,\sigma>0$ such that if $|V|<V^\ast$ and $Z>Z^\ast$ then $T_C$ generates a strongly continuous semigroup $\{S(t):t\geq 0\}$ satisfying
		\begin{equation}\label{eq:exponential_decay_of_semigroup}
			\Vert S(t)\Vert<\Gamma e^{-\sigma t}.
		\end{equation}
	\end{theorem}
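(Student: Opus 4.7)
The proof plan is a direct application of the Gearhart-Pruss-Greiner theorem (quoted in Appendix C), for which all three hypotheses have been established in this section. First, Proposition \ref{prop:strongly_continuous} supplies condition (i), that $T_C$ generates a strongly continuous semigroup $\{S(t):t\geq 0\}$ on $L^2(-1/2,1/2)$. Next, Theorem \ref{thm:negative_eigenvalues} combined with Proposition \ref{prop:spectrum_is_eigenvalues} supplies condition (ii), that the spectrum of $T_C$ (which consists only of eigenvalues) lies entirely in the open left half-plane. Finally, Proposition \ref{prop:resolvent_bounded} supplies condition (iii), that the resolvent $(\lambda I-T_C)^{-1}$ is uniformly bounded on the right half-plane $\{\lambda\in\mathbb C:\mathrm{Re}\,\lambda>0\}$. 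With all three ingredients in place, one invocation of GPG yields constants $\Gamma,\sigma>0$ such that $\|S(t)\|\leq\Gamma e^{-\sigma t}$ for every $t\geq 0$.

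To identify the decay rate $\sigma$ quantitatively, I would observe that by Lemma \ref{lem:mostly_negative_evs} all but possibly one eigenvalue of $T_C$ have real part less than $-1$, and by Theorem \ref{thm:negative_eigenvalues} the remaining eigenvalue is of order $-V^2$ in the small-velocity regime. Thus the spectrum is bounded away from the imaginary axis by a distance of order $V^2$. Combined with the uniform resolvent bound of Proposition \ref{prop:resolvent_bounded} and the holomorphy of the resolvent on the resolvent set, this allows the uniform resolvent bound to be extended to a slightly larger half-plane $\{\mathrm{Re}\,\lambda>-\sigma\}$ for some $\sigma>0$ strictly smaller than this spectral gap; applying GPG to the shifted operator $T_C+\sigma I$ then delivers the explicit exponential rate $\sigma$.

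The main obstacle in this whole program, namely verifying condition (iii) for a non self-adjoint operator, has already been overcome in Proposition \ref{prop:resolvent_bounded}. For a self-adjoint operator with compact resolvent, exponential decay would follow from the spectral theorem once negativity of the eigenvalues is established, as was done for stationary states in Theorem \ref{thm:SS_linear_stability}. The non self-adjoint nature of $T_C$, established at the opening of this section, opens the door to pseudospectral phenomena in which the resolvent may blow up near the imaginary axis even when the spectrum is bounded away from it; ruling out such behavior was the crucial step enabling the present application of GPG. Consequently, the proof of the theorem itself amounts to little more than an assembly of the three hypotheses and citing the GPG theorem, with the shift by $\sigma$ converting its qualitative conclusion into the quantitative bound \eqref{eq:exponential_decay_of_semigroup}.
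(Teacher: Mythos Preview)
Your proposal is correct and follows essentially the same approach as the paper: verify the three hypotheses of the GPG theorem via Proposition \ref{prop:strongly_continuous}, Theorem \ref{thm:negative_eigenvalues}, and Proposition \ref{prop:resolvent_bounded}, then invoke the theorem. Your second paragraph about shifting to extract an explicit $\sigma$ is unnecessary here, since the version of GPG quoted in Appendix C already yields the quantitative bound $\Vert S(t)\Vert\leq\Gamma e^{-\sigma t}$ directly.
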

	\begin{proof}
		We need to satisfy the three hypotheses of the GPG theorem \ref{thm:GPG}, see Appendix C. We have checked that indeed: 
		\begin{itemize}
			\item (i) is satisfied for sufficiently small $V^\ast$ due to Proposition \ref{prop:strongly_continuous}
			
			\item (ii) is satisfied for sufficiently small $V^\ast$ and sufficiently large $Z^\ast$ due to Theorem \ref{thm:negative_eigenvalues}.
			
			\item (iii) is satisfied due to Proposition \ref{prop:resolvent_bounded}.
		\end{itemize}
		Thus, the desired result holds.
	\end{proof}
		
	
We can now  prove that traveling wave solution $m_T$ to model $C$ are asymptotically stable. Specifically, we will prove the following theorem:
	\begin{theorem}\label{thm:nonlinear_stability}
		Fix $V\in\mathbb R$ and $Z>0$, and let $m(x,t)$ be a solution to  \eqref{eq:model_C} with $m(0,x)=m_0(x)$. Let $m_T$ denote the traveling wave solution to \eqref{eq:model_C} with velocity $V$. There exist $V^\ast, Z^\ast>0$ independent of $V$ and $Z$ and $\varepsilon>0$ depending on $V$ and $Z$ such that if $|V|<V^\ast$, $Z>Z^\ast$ and
		\begin{equation}
			\Vert m_0-m_T\Vert_{H^1}<\varepsilon,
		\end{equation}
		then
		\begin{equation}
			\lim_{t\to\infty}\Vert m(\cdot,t)-m_T\Vert_{L^2}=0.
		\end{equation}
	\end{theorem}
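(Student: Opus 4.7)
The plan is to mimic the strategy used for the stationary case in the proof of Theorem \ref{thm:stability_of_SS}, but with the compact self-adjoint linear stability theorem replaced by the GPG-based Theorem \ref{thm:linear_stability}, and with a more careful treatment of the lower-order non-self-adjoint terms in $T_C$. Let $u=m-m_T$, so that $u$ satisfies
\begin{equation}
\partial_t u=T_C u+\Psi(u),\qquad u(\cdot,0)=m_0-m_T=:u_0,
\end{equation}
with $\Psi(u)=\phi'(1/2)u'-(u\phi')'$ exactly as in \eqref{eq:Psi} (the nonlinear part does not depend on whether we linearize about $1$ or $m_T$). By Theorem \ref{thm:linear_stability}, the semigroup $S(t)$ generated by $T_C$ satisfies $\Vert S(t)\Vert\leq\Gamma e^{-\sigma t}$ for constants $\Gamma,\sigma>0$ uniform in $|V|<V^\ast$ and $Z>Z^\ast$. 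Duhamel's formula gives
\begin{equation}\label{eq:duhamel_TW}
u(\cdot,t)=S(t)u_0+\int_0^t S(t-\tau)\Psi(u(\tau,\cdot))\,d\tau,
\end{equation}
and Lemma \ref{lem:Psi_estimate} (whose proof only uses elliptic estimates on $\phi$ and does not depend on the linearization point) yields $\Vert\Psi(u)\Vert_{L^2}\leq C_0\Vert u\Vert_{L^2}\Vert u\Vert_{H^1}$. Thus, once the $H^1$ norm of $u$ is under control, a Gr\"onwall argument applied to \eqref{eq:duhamel_TW} gives exponential decay of $\Vert u(\cdot,t)\Vert_{L^2}$, exactly as in the stationary case.

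The main obstacle, and the only genuinely new step, is establishing the traveling-wave analog of Lemma \ref{lem:w_derivative_bounded}: a bound showing that if $\Vert u(\cdot,t)\Vert_{L^2}$ remains small, then $\Vert u'(\cdot,t)\Vert_{L^2}$ stays bounded by a constant times $\Vert u'(\cdot,0)\Vert_{L^2}$. The calculation in the stationary case squared the evolution equation $\partial_t u-u''=-\phi''+\Psi(u)$ to extract $\tfrac{d}{dt}\Vert u'\Vert_{L^2}^2$. Here the evolution equation has the form
\begin{equation}
\partial_t u-u''=\phi'(1/2)m_T'+Vu'-(m_T\phi')'-(u\phi_T')'+\Psi(u),
\end{equation}
so the same squaring-and-integrating procedure produces, in addition to the terms controlled by Lemma \ref{lem:Psi_estimate}, extra contributions of the form $\Vert\phi'(1/2)m_T'\Vert_{L^2}^2$, $V^2\Vert u'\Vert_{L^2}^2$, $\Vert(m_T\phi')'\Vert_{L^2}^2$, and $\Vert(u\phi_T')'\Vert_{L^2}^2$. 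These are handled by (a) the elliptic estimates of Proposition \ref{prop:elliptic_estimates}, which bound $\phi',\phi''$ by $\Vert u\Vert_{L^2}$; (b) the asymptotic bounds $\Vert m_T'\Vert_{L^2},\Vert\phi_T'\Vert_{L^2},\Vert\phi_T''\Vert_{L^2}=O(|V|)$ from Lemma \ref{lem:TW_asymptotic_form}; and (c) the Poincar\'e inequality applied to $u$ and $u'$. The crucial point is that the coefficient of $\Vert u'\Vert_{L^2}^2$ in the resulting differential inequality is $-\pi^2+O(V)+O(\Vert u\Vert_{L^2}^2)$, so for $V^\ast$ sufficiently small and $\Vert u\Vert_{L^2}$ below some threshold $U^\ast$, this coefficient is bounded above by $-\pi^2/2<0$, yielding the desired Gr\"onwall-type bound.

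With this bound in hand, the rest of the argument is a direct transcription of the proof of Theorem \ref{thm:stability_of_SS}. Namely, pick $\varepsilon>0$ so small that $\varepsilon<\min(U^\ast,\sigma\pi/(2\Gamma C_0(1+\pi)))$ and assume $\Vert u_0\Vert_{H^1}<\varepsilon$. Define $W=\{t\geq 0:\Vert u(\cdot,\tau)\Vert_{L^2}\leq U^\ast\text{ for all }0\leq\tau\leq t\}$, which is a nonempty closed interval containing $0$. On $W$, the $H^1$-control lemma gives $\Vert u'(\cdot,t)\Vert_{L^2}\leq\varepsilon$, so by \eqref{eq:duhamel_TW} and Lemma \ref{lem:Psi_estimate},
\begin{equation}
\Vert u(\cdot,t)\Vert_{L^2}\leq\Gamma e^{-\sigma t}\Vert u_0\Vert_{L^2}+\Gamma C_0\Bigl(1+\tfrac{1}{\pi}\Bigr)\varepsilon\int_0^t e^{-\sigma(t-\tau)}\Vert u(\cdot,\tau)\Vert_{L^2}\,d\tau.
\end{equation}
The choice of $\varepsilon$ ensures the coefficient in front of the integral is at most $\sigma/2$, so Gr\"onwall's inequality yields $\Vert u(\cdot,t)\Vert_{L^2}\leq\Gamma\Vert u_0\Vert_{L^2}e^{-\sigma t/2}$ on $W$. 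Shrinking $\varepsilon$ further if needed so that $\Gamma\varepsilon<U^\ast$, this estimate shows $\Vert u(\cdot,t)\Vert_{L^2}<U^\ast$ strictly for all $t\in W$, hence $W$ has no finite maximum and $W=[0,\infty)$. This gives the desired conclusion $\Vert m(\cdot,t)-m_T\Vert_{L^2}\to 0$ exponentially as $t\to\infty$.
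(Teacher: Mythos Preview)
Your proposal is correct and follows essentially the same approach as the paper: the paper's proof of Theorem \ref{thm:nonlinear_stability} consists precisely of proving the traveling-wave analog of Lemma \ref{lem:w_derivative_bounded} (stated there as Lemma \ref{lem:u_derivative_bounded_TW}, with the lower-order operator $Bu=(\phi'(1/2)-\phi')m_T'+(V-\phi_T')u'-m_T\phi''-u\phi_T''$ bounded exactly as you describe), and then declaring that the proof of Theorem \ref{thm:stability_of_SS} goes through verbatim with $S_C$ replaced by $T_C$ and the semigroup bound from Theorem \ref{thm:linear_stability}. Your explicit handling of the extra constant $\Gamma$ in the Duhamel/Gr\"onwall step (requiring both $\Gamma C_0(1+1/\pi)\varepsilon\leq\sigma/2$ and $\Gamma\varepsilon<U^\ast$) is a detail the paper leaves implicit.
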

		
	To prove Theorem \ref{thm:nonlinear_stability}, we follow the same strategy as proving Theorem \ref{thm:stability_of_SS}. We decompose $F_C$ as a sum of its linearization $T_C$ about $m_T$ and its ``nonlinear part''. Since the nonlinearity in $F_C$ is quadratic (that is, the Keller-Segel term $(m\phi')'$), the nonlinear part about the traveling wave $m=m_T$ is the same as the nonlinear part about the stationary state $m=1$.  Thus, 
	\begin{equation}
		F_C(m_T+u)=T_Cu+\Psi(u),
	\end{equation}
	where $\Psi$ is given by \eqref{eq:Psi}. Thus, we may directly apply Proposition \ref{lem:Psi_estimate}. We will prove a version of \ref{lem:w_derivative_bounded} for $T_C$ showing that the linear part of $F_C$ dominates the nonlinear part in a neighborhood of $m_T$. Finally, the proof of Theorem \ref{thm:nonlinear_stability} is identical to the proof of Theorem \ref{thm:stability_of_SS}.
	
	\begin{lemma}\label{lem:u_derivative_bounded_TW}
		Let $T,\delta>0$ and let $u$ be a solution to
		\begin{equation}
			\begin{cases}
				\partial_tu=T_Cu+\Psi(u) & -1/2<x<1/2,\;0<t<T\\
				u'=0 & x=\pm 1/2,\;t>0.
			\end{cases}
		\end{equation}
		There exist $V^\ast,U^\ast>0$ such that if $\Vert u'(0,\cdot)\Vert_{L^2}<\delta$, $|V|<V^\ast$, and $\Vert u(\cdot,t)\Vert_{L^2}<U^\ast$ for all $0\leq t\leq T$, then
		\begin{equation}
			\Vert u'(\cdot,t)\Vert_{L^2}\leq \delta
		\end{equation}
		for all $0\leq t\leq T$.
	\end{lemma}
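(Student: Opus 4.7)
The plan is to closely mimic the proof of Lemma \ref{lem:w_derivative_bounded} from the stationary case, treating the new terms introduced by the traveling wave background $m_T$, $\phi_T$ as perturbations that are small in $|V|$. First, I would rewrite the evolution equation by isolating the diffusion on the left:
\begin{equation*}
\partial_t u - u'' \;=\; \phi'(1/2)\, m_T' + V u' - (m_T \phi')' - (u \phi_T')' + \Psi(u) \;=:\; G(u).
\end{equation*}
Squaring both sides, integrating over $(-1/2,1/2)$, and integrating by parts using the no-flux condition $u'(\pm 1/2,t)=0$ yields, exactly as in Lemma \ref{lem:w_derivative_bounded},
\begin{equation*}
\|G(u)\|_{L^2}^2 \;=\; \|\partial_t u\|_{L^2}^2 + \frac{d}{dt}\|u'\|_{L^2}^2 + \|u''\|_{L^2}^2,
\end{equation*}
so that $\frac{d}{dt}\|u'\|_{L^2}^2 \leq \|G(u)\|_{L^2}^2 - \|u''\|_{L^2}^2$.

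Next I would bound each contribution to $\|G(u)\|_{L^2}^2$. The elliptic bounds from Proposition \ref{prop:elliptic_estimates} control $|\phi'(1/2)|$, $\|\phi'\|_{L^\infty}$ and $\|\phi''\|_{L^2}$ by a multiple of $\|u\|_{L^2}$. The asymptotic expansions in Lemma \ref{lem:TW_asymptotic_form}, together with the smoothness statement in Remark \ref{rem:smooth_TWs}, provide the key smallness estimates $\|m_T-1\|_{L^\infty}, \|m_T'\|_{L^\infty}, \|\phi_T'\|_{L^\infty}, \|\phi_T''\|_{L^\infty} \leq C|V|$ uniformly for $|V|<V^\ast$. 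Finally, Lemma \ref{lem:Psi_estimate} supplies $\|\Psi(u)\|_{L^2}\leq C\|u\|_{L^2}\|u\|_{H^1}$. Using $(a+b)^2\leq 2a^2+2b^2$ and Poincar\'e's inequality $\pi^2\|u'\|_{L^2}^2 \leq \|u''\|_{L^2}^2$, I expect to obtain an estimate of the form
\begin{equation*}
\frac{d}{dt}\|u'\|_{L^2}^2 \;\leq\; -\bigl(\pi^2 - \alpha(V,U^\ast)\bigr)\,\|u'\|_{L^2}^2 + \beta(V,U^\ast),
\end{equation*}
where the $u'$-dependent terms come from $\|Vu'\|_{L^2}^2$ and $\|u'\phi_T'\|_{L^2}^2$ (each of order $|V|^2\|u'\|_{L^2}^2$), and from the $\|u'\|$-part of $\Psi(u)$ which contributes at most $C\|u\|_{L^2}^2\|u'\|_{L^2}^2 \leq C(U^\ast)^2\|u'\|_{L^2}^2$. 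Thus $\alpha(V,U^\ast),\beta(V,U^\ast)\to 0$ as $V,U^\ast\to 0$.

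Choosing $V^\ast$ and $U^\ast$ small enough that $\alpha(V,U^\ast) < \pi^2/2$, we obtain $\frac{d}{dt}\|u'\|_{L^2}^2 \leq -R_1\|u'\|_{L^2}^2 + R_2$ with $R_1 \geq \pi^2/2$ and $R_2 = O((U^\ast)^2)$. Introducing $q(t) = \|u'(\cdot,t)\|_{L^2}^2 - R_2/R_1$, Gr\"onwall's inequality yields $q(t)\leq q(0) e^{-R_1 t}$, and by further shrinking $U^\ast$ so that $R_2/R_1 < \delta^2$, the conclusion $\|u'(\cdot,t)\|_{L^2}\leq\delta$ on $[0,T]$ follows exactly as in Lemma \ref{lem:w_derivative_bounded}.

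The main obstacle, distinguishing this lemma from its stationary counterpart, is the bookkeeping of the extra terms $\phi'(1/2)m_T'$, $(m_T\phi')'$, and $(u\phi_T')'$ produced by linearising around the traveling wave rather than around $m\equiv 1$. The crucial point is that each of these either carries an explicit prefactor of $\|m_T-1\|_{L^\infty}$, $\|m_T'\|_{L^\infty}$, $\|\phi_T'\|_{L^\infty}$ or $\|\phi_T''\|_{L^\infty}$, all of which are $O(|V|)$ by Lemma \ref{lem:TW_asymptotic_form}; so the coefficient of $\|u'\|_{L^2}^2$ on the right-hand side remains strictly negative after the Poincar\'e-generated dissipation $-\pi^2\|u'\|_{L^2}^2$ is taken into account, provided $V^\ast$ and $U^\ast$ are both sufficiently small. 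A secondary subtlety, already present in the stationary case, is the term $C\|u\|_{L^2}^2\|u'\|_{L^2}^2$ coming from $\Psi(u)$, which is absorbed by demanding $U^\ast<\pi/(2C)$.
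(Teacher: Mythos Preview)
Your proposal is correct and follows essentially the same route as the paper: isolate $\partial_t u - u''$, square and integrate to obtain the energy identity, bound the right-hand side using the elliptic estimates of Proposition~\ref{prop:elliptic_estimates}, the $O(|V|)$ smallness of $m_T-1$, $m_T'$, $\phi_T'$, $\phi_T''$, and Lemma~\ref{lem:Psi_estimate}, then absorb the $\|u'\|_{L^2}^2$ terms with Poincar\'e and close with the Gr\"onwall argument for $q(t)=\|u'\|_{L^2}^2-R_2/R_1$. The paper packages the linear contribution as a single operator $Bu=(\phi'(1/2)-\phi')m_T'+(V-\phi_T')u'-m_T\phi''-u\phi_T''$ and records the explicit bound $\|Bu\|_{L^2}\leq C_2 V^\ast\|u'\|_{L^2}+C_3\|u\|_{L^2}$, but this is exactly your $G(u)-\Psi(u)$ and your $\alpha,\beta$ bookkeeping amounts to the same thing.
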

	\begin{proof}
		Write the evolution equation \eqref{eq:model_C} as
		\begin{equation}
			\partial_t  u-u''=Bu+\Psi(m).
		\end{equation}
		Where $B$ is defined by
		\begin{equation}\label{eq:B2}
			Bu=(\phi'(1/2)-\phi')m_T'+(V-\phi_T')u'-m_T\phi''-u\phi_T'',\quad \begin{cases}-Z\phi''+\phi=Pu\\
				\phi(-1/2)=\phi(1/2)\\ \phi'(-1/2)=\phi'(1/2).\end{cases}
		\end{equation}
		Now square both sides and integrate to obtain
		\begin{align}
			\Vert Bu+\Psi(u)\Vert_{L^2}^2&=\int_{-1/2}^{1/2}\Psi^2(u)\,dx\\
			&=\int_{-1/2}^{1/2} (\partial_t  u)^2-2(\partial_t  u)u''+(u'')^2\,dx\\
			&=\Vert\partial_t  u\Vert_{L^2}^2+2\int_{-1/2}^{1/2}(\partial_t  u')u'\,dx+\Vert m''\Vert_{L^2}^2\\
			&=\Vert\partial_t  u\Vert_{L^2}^2+\frac{d}{dt}\Vert u'\Vert_{L^2}^2+\Vert u''\Vert_{L^2}^2.
		\end{align}
		Thus,
		\begin{align}
			\frac{d}{dt}\Vert u'\Vert^2_{L^2}&\leq\Vert Bu+\Psi(u)\Vert_{L^2}^2-\Vert u''\Vert_{L^2}^2\\
			&\leq2\Vert Bu\Vert_{L^2}^2+2\Vert\Psi(u)\Vert_{L^2}^2-\Vert u''\Vert_{L^2}^2.
		\end{align}
		From Lemma \ref{lem:Psi_estimate}, there exists $C_1$ independent of $u$, $V$, and $Z$ such that
		\begin{equation}
			\Vert\Psi(u)\Vert_{L^2}\leq C_1\Vert u\Vert_{L^2}\Vert u\Vert_{H^1}.
		\end{equation}
		Observe that due to \ref{prop:elliptic_estimates}, if $|V|<V^\ast$ is small enough, there exist $C_2$, $C_3$ depending only on $Z$ such that
		\begin{equation}
			\Vert Bu\Vert_{L^2}\leq C_2V^\ast \Vert u'\Vert_{L^2}+C_3\Vert u\Vert_{L^2}.
		\end{equation}
		Since $\int_{-1/2}^{1/2}u\,dx=0$ and $u'(\pm 1/2,t)=0$, we may apply the Poincar\'e inequality to both $u$ and $u'$ with a Poincar\'e constant of $\pi$:
		\begin{equation}\label{eq:Poincare_m_prime}
			\pi\Vert u\Vert_{L^2}\leq\Vert u'\Vert_{L^2}\quad\text{and}\quad\pi\Vert u'\Vert_{L^2}\leq\Vert u''\Vert_{L^2}.
		\end{equation}
		Thus,
		\begin{align}
			\frac{d}{dt}\Vert u'\Vert^2_{L^2}&\leq 2C_1^2\Vert u\Vert_{L^2}^2\Vert u\Vert_{H^1}^2+4C_2^2(V^\ast)^2\Vert u'\Vert_{L^2}^2+4C_3^2\Vert u\Vert_{L^2}^2-\Vert u''\Vert_{L^2}^2\\
			&\leq2C_1^2\left(1+\frac{1}{\pi}\right)^2\Vert u\Vert_{L^2}^2\Vert u'\Vert_{L^2}+4C_2^2(V^\ast)^2\Vert u'\Vert_{L^2}^2+4C_3^2\Vert u\Vert_{L^2}^2-\Vert u''\Vert_{L^2}^2\\
			&\leq-\left(\pi^2-4C_1^2\Vert u\Vert_{L^2}^2-4C_2^2(V^\ast)^2\right)\Vert u'\Vert_{L^2}^2+4C_3^2\Vert u\Vert_{L^2}^2.
		\end{align}
		Without loss of generality, we may assume that
		\begin{equation}
			V^\ast\leq\frac{\pi}{4C_2}\quad\text{and}\quad U^\ast\leq\frac{\pi}{4C_1}.
		\end{equation}
		Then, if $\Vert u\Vert_{L^2}\leq U^\ast$ for all $0<t<T$,
		\begin{equation}
			\frac{d}{dt}\Vert u'\Vert^2_{L^2}\leq -R_1\Vert u'\Vert_{L^2}^2+R_2
		\end{equation}
		where
		\begin{equation}
			R_1=\pi^2-4C_1^2(U^\ast)^2-4C_2^2(V^\ast)^2\geq\frac{\pi^2}{2}\quad\text{and}\quad R_2=4C_3^2(U^\ast)^2.
		\end{equation}
		
		We now introduce a new variable:
		\begin{equation}
			q(t)=\Vert u'(\cdot,t)\Vert_{L^2}^2-\frac{R_2}{R_1}.
		\end{equation}
		Then $q$ satisfies $q'\leq -R_1q.$ By Gr\"onwall's inequality,
		\begin{equation}
			q(t)\leq q(0)e^{-R_1t}.
		\end{equation}
		We conclude that if $q(0)<0$, then $q(t)<0$ for all $t>0$. Thus, if $|V|<V^\ast$ and $\Vert u\Vert_{L^2}\leq U^\ast$, and if
		\begin{equation}
			\Vert u'(\cdot,0)\Vert_{L^2}<\sqrt{\frac{R_2}{R_1}},\quad \text{then}\quad
			\Vert u'(\cdot,t)\Vert_{L^2}<\sqrt{\frac{R_2}{R_1}}
		\end{equation}
		for all $t>0$. Letting $U^\ast$ be sufficiently small that
		\begin{equation}
			\sqrt{\frac{R_2}{R_1}}\leq\frac{\sqrt{8}}{\pi}C_3 U^\ast<\delta,
		\end{equation}
		the desired result holds.
	\end{proof}
	
	With Lemma \ref{lem:u_derivative_bounded_TW} in place, we may duplicate the proof of Theorem \ref{thm:stability_of_SS} in order to prove the nonlinear stability of traveling waves via Theorem \ref{thm:nonlinear_stability}.
	
	\section*{Acknowledgments}
	
	We  thank V. Rybalko for many helpful discussions on NSA and the relevance in this case of the GPG theorem. We also thank O. Krupchytskyi for his feedback on the proofs and mathematical techniques used in this paper. Finally, we thank J.-F. Joanny,  J. Casademunt and P. Recho for discussing the physical aspects of the model and the subtlety of stability in the problems with NSA. L. B. was supported by the National Science Foundation grants DMS-2005262 and DMS-2404546. A. S. was also partially supported by the same National Science Foundation grant DMS-2005262.  L.T.
  acknowledges the support   under the French grants ANR-17-CE08-0047-02,  ANR-21-CE08-MESOCRYSP  
and the European grant ERC-H2020-MSCA-RISE-2020-101008140.
	
	\section{Appendix A }
	
Here we show the   Proposition which  controls the solution $\phi$ to \eqref{eq:SL_phi}, \eqref{eq:SL_phi_BC_1}-\eqref{eq:SL_phi_BC_2}.
		\begin{proposition}\label{prop:elliptic_estimates}
			Let $u\in L^0([-1/2,1/2])$. Then there exists a unique solution $\phi\in W^{2,p}(-1/2,1/2)$ for any $1\leq p\leq\infty$ satisfying $-Z\phi''+\phi=Pu$ with periodic boundary conditions on $(-1/2,1/2)$. Moreover, $\phi$ satisfies the following for any $1\leq p\leq\infty$:
			\begin{itemize}
				\item $\Vert \phi\Vert_{L^p}\leq P\Vert u\Vert_{L^p}$,
				\item $\Vert\phi'\Vert_{L^\infty}\leq \frac{P}{2Z}\Vert u\Vert_{L^2}$,
				\item $\Vert \phi''\Vert_{L^p}\leq \frac{2P}{Z}\Vert u\Vert_{L^p}$.
			\end{itemize}
		\end{proposition}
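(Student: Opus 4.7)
The plan is to explicitly construct the Green's function of the translation-invariant operator $-Z\partial_x^2 + I$ on the torus $[-1/2,1/2]$ with periodic boundary conditions, from which all three estimates follow by routine convolution inequalities. Uniqueness is immediate: if $-Z\phi'' + \phi = 0$ with periodic boundary conditions, testing against $\phi$ and integrating by parts gives $Z\|\phi'\|_{L^2}^2 + \|\phi\|_{L^2}^2 = 0$, so $\phi \equiv 0$.

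For existence I would exhibit the Green's function directly. Setting $\alpha = 1/\sqrt{Z}$, one checks by inspection that
\begin{equation*}
G(s) = \frac{\cosh(\alpha(1/2 - |s|))}{2\sqrt{Z}\,\sinh(\alpha/2)}, \qquad s \in [-1/2,1/2],
\end{equation*}
is even, periodic, solves the homogeneous equation off the origin, satisfies $G'(\pm 1/2)=0$, and produces the correct jump $G'(0^+)-G'(0^-) = -1/Z$ to yield $\delta_0$ on the right. Two features of $G$ drive the whole argument: it is strictly positive, and a one-line integration gives $\int_{-1/2}^{1/2} G\,ds = 1$ (consistent with the fact that constants solve the PDE with constant right-hand side).

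With the representation $\phi = P\,G \ast u$ in hand, the first estimate is Young's convolution inequality: $\|\phi\|_{L^p} \le P\|G\|_{L^1}\|u\|_{L^p} = P\|u\|_{L^p}$. The third estimate follows immediately by rearranging the PDE as $\phi'' = (\phi - Pu)/Z$ and applying the first:
\begin{equation*}
\|\phi''\|_{L^p} \le \frac{1}{Z}\bigl(\|\phi\|_{L^p} + P\|u\|_{L^p}\bigr) \le \frac{2P}{Z}\|u\|_{L^p}.
\end{equation*}
For the middle estimate I would differentiate $G$, obtaining $G'(s) = -\sinh(\alpha(1/2-s))/(2Z\sinh(\alpha/2))$ on $(0,1/2)$ (extended as an odd function by symmetry of $G$), whose absolute value is monotonically decreasing from $|G'(0^+)| = 1/(2Z)$ to $0$ at $s = 1/2$; hence $\|G'\|_{L^\infty} = 1/(2Z)$. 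Then $|\phi'(x)| \le P\|G'\|_{L^\infty}\|u\|_{L^1} \le (P/(2Z))\|u\|_{L^2}$, since Hölder on the unit-length interval gives $\|u\|_{L^1} \le \|u\|_{L^2}$.

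The only point requiring mild care is the identification of the sharp constant $\|G'\|_{L^\infty} = 1/(2Z)$, attained at the origin (the location of the singularity of $G$); this is precisely what produces the $1/(2Z)$ prefactor in the second bound. The $W^{2,p}$-regularity of $\phi$ for any $1 \le p \le \infty$ is then automatic from the explicit convolution, which places $\phi$ in $L^\infty$ (hence in $L^p$) and, via the rearranged PDE, puts $\phi''$ in the same $L^p$ as $u$.
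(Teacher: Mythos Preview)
Your proof is correct and follows essentially the same route as the paper: construct the explicit periodic Green's function, use Young's convolution inequality together with $\int G = 1$ for the $L^p$ bound, bound $\|G'\|_{L^\infty}$ pointwise for the derivative estimate, and rearrange the equation for the $\phi''$ bound. Your presentation via the convolution kernel on the torus is slightly cleaner than the paper's two-variable $G(x,y)$ formulation, but the content is the same.
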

		\begin{proof}
			The solution $\phi$ can be calculated explicitly using a Green's function:
			\begin{equation}
				\phi(x)=\frac{P}{2\sqrt{Z}\sinh\left(\frac{1}{2\sqrt{Z}}\right)}\int_{-1/2}^{1/2}G(x,y)u(y)\,dy,\quad G(x,y)=\begin{cases}
					\cosh\left(\frac{1/2+(y-x)}{\sqrt{Z}}\right) & y<x\\
					\cosh\left(\frac{1/2+(x-y)}{\sqrt{Z}}\right) & y>x.
				\end{cases}.
			\end{equation}
			By Young's Integral inequality \cite{Rus1977}, $\Vert\phi\Vert_{L^p}\leq C\Vert u\Vert_{L^p}$ where 
			\begin{equation}
				C=\sup_{|x|\leq 1/2}\frac{P}{2\sqrt{Z}\sinh\left(\frac{1}{2\sqrt{Z}}\right)}\int_{-1/2}^{1/2}|G(x,y)|\,dy=\sup_{|y|\leq 1/2}\frac{P}{2\sqrt{Z}\sinh\left(\frac{1}{2\sqrt{Z}}\right)}\int_{-1/2}^{1/2}|G(x,y)|\,dx.
			\end{equation}
			We calculate
			\begin{align}
				\frac{P}{2\sqrt{Z}\sinh\left(\frac{1}{2\sqrt{Z}}\right)}&\int_{-1/2}^{1/2}|G(x,y)|\,dy\\
				&=\frac{P}{2\sqrt{Z}\sinh\left(\frac{1}{2\sqrt{Z}}\right)}\left(\int_{-1/2}^x\cosh\left(\frac{1/2+y-x}{\sqrt{Z}}\right)\,dy+\int_x^{1/2}\cosh\left(\frac{1/2+x-y}{\sqrt{Z}}\right)\,dy\right)\\
				&=\frac{P}{2\sinh\left(\frac{1}{2\sqrt{Z}}\right)}\left(\sinh\left(\frac{1}{2\sqrt{Z}}\right)+\sinh\left(\frac{x}{\sqrt{Z}}\right)-\sinh\left(\frac{x}{\sqrt{Z}}\right)+\sinh\left(\frac{1}{2\sqrt{Z}}\right)\right)\\
				&=P.
			\end{align}
			We conclude that $\Vert \phi\Vert_{L^p}\leq P\Vert u\Vert_{L^p}$. Next, since $G$ is continuous and differentiable in $x$ except where $x=y$,
			\begin{equation}
				\phi'(x)= \frac{P}{2\sqrt{Z}\sinh\left(\frac{1}{2\sqrt{Z}}\right)}\int_{-1/2}^{1/2}\frac{d}{dx}G(x,y)u(y)dy.
			\end{equation}
			Therefore, using H\"older's inequality for $p$ and its H\"older conjugate $q$,
			\begin{equation}
				\Vert\phi'\Vert_{L^\infty}\leq \frac{P}{2\sqrt{Z}\sinh\left(\frac{1}{2\sqrt{Z}}\right)}\Vert u\Vert_{L^p}\sup_{|x|\leq 1/2}\left\Vert\frac{d}{dx}G(x,\cdot)\right\Vert_{L^q}.
			\end{equation}
			Since $|\frac{d}{dx}G(x,y)|\leq \frac{1}{\sqrt{Z}}\sinh\left(\frac{1}{2\sqrt{Z}}\right)$, we conclude that $\Vert \phi'\Vert_{L^\infty}\leq \frac{P}{2Z}\Vert u\Vert_{L^p}.$ Finally, since $\phi''=-\frac{P}{Z}u+\frac{1}{Z}\phi$, we have
			\begin{equation}
				\Vert \phi''\Vert_{L^p}\leq \frac{P}{Z}\Vert u\Vert_{L^p}+\frac{1}{Z}\Vert\phi\Vert_{L^p}\leq\frac{2P}{Z}\Vert u\Vert_{L^p}.
			\end{equation}
		\end{proof}
		
		\section{Appendix B}		
		 Here we formulate  for convenience the Crandall-Rabinowitz (CR) theorem \cite{CraRab1971}.		
		\begin{theorem}
			Let $X$ and $Y$ be Banach spaces, and let $\mathcal F:X\times \mathbb R\to Y$ be an operator with the following properties:
			\begin{itemize}
				\item $\mathcal F(0,t)=0$ for all $t$.
				\item $D_x\mathcal F$, $D_t\mathcal F$, and $D_{xt}\mathcal F$ exist and are continuous.
				\item The dimension of the null space and co-dimension of the range of $D_x\mathcal F(0,0)$ are both 1.
				\item If $x_0\neq 0$ is in the null space of $D_x\mathcal F(0,0)$, then $D_{xt}\mathcal F(0,0)x_0$ is not in the range of $D_x\mathcal F(0,0)$.
			\end{itemize}
			Then there exists a neighborhood $U\subset X\times \mathbb R$ of $(0,0)$, $\varepsilon>0$ and functions $\sigma:(-\varepsilon,\varepsilon)\to X$ and $s:(-\varepsilon,\varepsilon)\to\mathbb R$ with $\sigma\not\equiv 0$ such that $\sigma(0)=0$, $s(0)=0$, and
			\begin{equation}
				\mathcal F^{-1}(0)\cap U=\left(\{(0,t):t\in\mathbb R\}\cup\{(\sigma(\alpha),s(\alpha)):|\alpha|<\varepsilon\}\right)\cap U.
			\end{equation}
			Moreover, if $\mathcal F_{xx}$ exists and is continuous, then $\sigma$ is continuously differentiable and $\sigma'(0)$ spans the null space of $D_x\mathcal F(0,0)$.
		\end{theorem}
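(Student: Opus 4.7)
The plan is to prove the Crandall--Rabinowitz theorem by a Lyapunov--Schmidt reduction followed by two successive applications of the implicit function theorem (IFT). Hypothesis (iii) supplies the functional-analytic splittings: let $x_0 \in X$ span $\ker D_x\mathcal F(0,0)$, choose a closed complement $X_1$ so that $X = \mathbb R x_0 \oplus X_1$, pick $y_0 \in Y$ spanning a one-dimensional complement of $R := \range D_x\mathcal F(0,0)$, and let $P : Y \to R$ be the continuous projection along $\mathbb R y_0$.

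Writing $x = \alpha x_0 + x_1$ with $\alpha \in \mathbb R$ and $x_1 \in X_1$, the equation $\mathcal F(x,t)=0$ decomposes into
\begin{equation*}
	G(\alpha, x_1, t) := P\,\mathcal F(\alpha x_0 + x_1, t) = 0 \quad \text{in } R,
\end{equation*}
\begin{equation*}
	h(\alpha, x_1, t) := (I-P)\,\mathcal F(\alpha x_0 + x_1, t) = 0 \quad \text{in } \mathbb R y_0 \cong \mathbb R.
\end{equation*}
The operator $D_{x_1}G(0,0,0) = P \circ D_x\mathcal F(0,0)|_{X_1}$ is a continuous bijection from $X_1$ onto $R$, hence an isomorphism by the open mapping theorem. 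The IFT therefore furnishes a $C^1$ map $(\alpha, t) \mapsto x_1(\alpha, t)$ on a neighborhood of $(0,0)$ with $x_1(0,0) = 0$ and $G(\alpha, x_1(\alpha, t), t) \equiv 0$. Uniqueness in the IFT together with hypothesis (i) forces $x_1(0, t) \equiv 0$; differentiating $G = 0$ at the origin and using $x_0 \in \ker D_x\mathcal F(0,0)$ gives $\partial_\alpha x_1(0,0) = 0$, while $x_1(0, t) \equiv 0$ gives $\partial_t x_1(0,0) = 0$.

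Substituting back leaves the scalar bifurcation equation $\tilde h(\alpha, t) := h(\alpha, x_1(\alpha,t), t) = 0$. Since $\tilde h(0, t) \equiv 0$, a Hadamard-type integration produces the factorization $\tilde h(\alpha, t) = \alpha\, H(\alpha, t)$ with $H(\alpha, t) = \int_0^1 \partial_\alpha \tilde h(\tau\alpha, t)\, d\tau$. The trivial branch $\{(0, t)\}$ solves $\mathcal F = 0$ tautologically; the nontrivial branch must solve $H(\alpha, t) = 0$. A direct chain-rule calculation using $\partial_\alpha x_1(0,0) = 0$ and $\partial_t x_1(0,0) = 0$ gives $H(0, 0) = \partial_\alpha \tilde h(0, 0) = (I-P)\,D_x\mathcal F(0,0)\, x_0 = 0$ and
\begin{equation*}
	\partial_t H(0, 0) = \partial_{\alpha t} \tilde h(0, 0) = (I-P)\, D_{xt}\mathcal F(0,0)\, x_0,
\end{equation*}
which is nonzero by the transversality hypothesis (iv); continuity of $\partial_t H$ is supplied by hypothesis (ii). A second application of the IFT (using continuity of $H$ and strict monotonicity of $H(\alpha, \cdot)$ near $t=0$) solves $H(\alpha, t) = 0$ locally as $t = s(\alpha)$ with $s(0) = 0$, yielding the nontrivial branch $\sigma(\alpha) = \alpha x_0 + x_1(\alpha, s(\alpha))$.

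The main obstacle is the clean Hadamard factorization combined with the second IFT under the rather weak regularity of hypothesis (ii), which does not assume $D_{xx}\mathcal F$ exists; a version of the IFT valid under only the continuity of $H$ and $\partial_t H$ is required, and local uniqueness in both reductions is what ensures that the trivial and nontrivial branches together exhaust $\mathcal F^{-1}(0) \cap U$. The final claim that $\sigma$ is $C^1$ with $\sigma'(0)$ spanning $\ker D_x\mathcal F(0,0)$ follows when the extra $C^2$ hypothesis on $\mathcal F$ is available, as this upgrades $H$ to $C^1$, the standard IFT gives $s \in C^1$, and the chain rule yields $\sigma'(0) = x_0 + \partial_\alpha x_1(0,0) + s'(0)\,\partial_t x_1(0,0) = x_0$.
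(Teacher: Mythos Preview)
The paper does not prove this theorem; it merely quotes it in Appendix B as the Crandall--Rabinowitz theorem with a citation to \cite{CraRab1971}, and then applies it in the proof of Theorem~\ref{thm:bifurcation}. There is therefore no ``paper's own proof'' to compare against.

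Your argument via Lyapunov--Schmidt reduction is the classical route and is essentially correct. The splittings from hypothesis (iii), the first IFT to solve the infinite-dimensional equation $G=0$ for $x_1(\alpha,t)$, the Hadamard factorization $\tilde h(\alpha,t)=\alpha H(\alpha,t)$, and the computation $\partial_t H(0,0)=(I-P)D_{xt}\mathcal F(0,0)x_0\neq 0$ from hypothesis (iv) are all standard and accurate. You are right to flag the regularity issue: without $D_{xx}\mathcal F$, the reduced function $H$ need not be $C^1$ in $\alpha$, so the textbook IFT does not directly apply to $H(\alpha,t)=0$. Your workaround---using continuity of $H$ and of $\partial_t H$ to get strict monotonicity of $H(\alpha,\cdot)$ near the origin, then invoking the intermediate value theorem for existence and monotonicity for uniqueness and continuity of $s(\alpha)$---is exactly how this gap is closed in the original Crandall--Rabinowitz argument. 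The final $C^1$ upgrade of $\sigma$ under the additional $D_{xx}\mathcal F$ hypothesis, with $\sigma'(0)=x_0$, is also correct.

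One minor omission: the claim that $X_1$ can be chosen as a \emph{closed} complement of $\mathbb R x_0$ deserves a word (e.g., via Hahn--Banach, pick a bounded functional $\ell$ with $\ell(x_0)=1$ and set $X_1=\ker\ell$); similarly, closedness of the range $R$ (needed for the projection $P$ to be continuous) follows from its finite codimension together with the open mapping theorem, but is worth stating explicitly.
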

		In the main text we proceed by checking  systematically  these four   properties for the operator \eqref{F}.

		\section{Appendix C}		
 Here we formulate  the  the Gearhart-Pr\"uss-Greiner (GPG) theorem \cite{EngNagBre2000}.
			
	\begin{theorem}\label{thm:GPG}
		 Let $X$ be a Hilbert space, and let $L:D(L)\to X$ be a linear operator, where the domain $D(L)$ of $L$ is a dense subspace of $X$. If the following conditions hold
		\begin{itemize}
			\item[(i)] the semigroup $(S(t))_{t\geq 0}$\label{sym3:S(t)} generated by $L$ is strongly continuous,
			\item[(ii)] The resolvent set of $L$ contains $\{z\in\mathbb C:\text{Re}\,z>0\}$, and
			\item[(iii)] The resolvent $(\lambda I-L)^{-1}$ is uniformly bounded on the above set, i.e.,
			\begin{equation}\label{eq:resolvent_bound}
				\sup_{\text{Re}\,\lambda>0}\Vert (\lambda I-L)^{-1}\Vert_X<\infty,
			\end{equation} 
		\end{itemize}
		then there exists $\Gamma,\sigma>0$ such that
		\begin{itemize}
			\item[(a)] For each $\lambda$ in the spectrum $\sigma(S(t))$, $|\lambda|<e^{-\sigma t}$, and 
			\item[(b)] For each $t\geq 0$, $\Vert S(t)\Vert_X\leq \Gamma e^{-\sigma t}$.
		\end{itemize}
	\end{theorem}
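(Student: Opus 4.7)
The plan is to prove the Gearhart--Pr\"uss--Greiner theorem by combining a Neumann-series perturbation, which extends the region of uniform resolvent boundedness past the imaginary axis, with the Plancherel identity, which is available because $X$ is a Hilbert space. Hypotheses (i)--(iii) will pin the exponential growth bound $\omega_0(S) := \limsup_{t\to\infty} t^{-1}\log\|S(t)\|_X$ strictly below zero, from which conclusion (b) is immediate by definition of $\omega_0$, and conclusion (a) follows from the spectral radius formula $r(S(t)) = e^{\omega_0(S) t}$, which holds on any Banach space.

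First, I would enlarge the region of uniform resolvent boundedness. Set $M := \sup_{\text{Re}\,\lambda > 0}\|(\lambda I - L)^{-1}\|$, finite by (iii). A Neumann series centered at any $\lambda_0$ with $\text{Re}\,\lambda_0 \geq 0$ gives
\begin{equation*}
(\mu I - L)^{-1} = (\lambda_0 I - L)^{-1}\sum_{k=0}^\infty (\lambda_0 - \mu)^k (\lambda_0 I - L)^{-k},
\end{equation*}
converging with norm at most $2M$ whenever $|\mu - \lambda_0| < 1/(2M)$. Hence $\rho(L) \supset \{\text{Re}\,\lambda > -\sigma_0\}$ for some $\sigma_0 > 0$, still with uniform bound $2M$. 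Fix $\sigma \in (0,\sigma_0)$, let $\tilde L := L + \sigma I$, and work with the rescaled semigroup $\tilde S(t) := e^{\sigma t} S(t)$; it still satisfies (i)--(iii) with uniform resolvent bound on the closed right half plane, so it suffices to show $\tilde S$ is uniformly bounded in $t$.

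Second, I would use the Laplace-transform identity $(\lambda I - \tilde L)^{-1} x = \int_0^\infty e^{-\lambda t}\tilde S(t) x\,dt$, valid for $\text{Re}\,\lambda > \omega_0(\tilde S)$, together with the vector-valued Plancherel identity (available because $X$ is Hilbert) to obtain, for $\omega > \omega_0(\tilde S)$,
\begin{equation*}
\int_0^\infty e^{-2\omega t}\|\tilde S(t) x\|^2\,dt = \frac{1}{2\pi}\int_{-\infty}^\infty\|(\omega + is - \tilde L)^{-1} x\|^2\,ds.
\end{equation*}
For $x \in D(\tilde L^2)$, iterating the telescoping identity $(\lambda I - \tilde L)^{-1} x = x/\lambda + \tilde L x/\lambda^2 + (\lambda I - \tilde L)^{-1}\tilde L^2 x/\lambda^2$ together with the uniform resolvent bound yields $\|(\omega + is - \tilde L)^{-1} x\|^2 = O((1+s^2)^{-2})$ uniformly in small $\omega \geq 0$. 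Dominated convergence then lets $\omega \downarrow 0$, producing
\begin{equation*}
\int_0^\infty \|\tilde S(t) x\|^2\,dt \leq C\bigl(\|x\|^2 + \|\tilde L x\|^2 + \|\tilde L^2 x\|^2\bigr)
\end{equation*}
for every $x \in D(\tilde L^2)$.

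Finally, this pointwise $L^2$-integrability on the dense subspace $D(\tilde L^2)$ must be upgraded to a uniform operator estimate so that the Datko--Pazy theorem applies, giving $\|\tilde S(t)\|$ uniformly bounded, hence $\omega_0(\tilde S) \leq 0$, hence $\|S(t)\| \leq \Gamma e^{-\sigma t}$, which is (b); conclusion (a) then follows from $|\lambda| \leq r(S(t)) \leq \|S(t)\| \leq \Gamma e^{-\sigma t}$ for each $\lambda \in \sigma(S(t))$. The main obstacle is precisely this extension step: the Plancherel bound depends on the higher-order norms $\|\tilde L^k x\|$ rather than $\|x\|$, so a naive density argument fails. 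The standard workaround is a smoothing trick exploiting that $\tilde L$ is boundedly invertible: the substitution $x = \tilde L^{-2} y$, together with the commutation $\tilde S(t)\tilde L^{-2} = \tilde L^{-2}\tilde S(t)$, converts the estimate into $\int_0^\infty \|\tilde L^{-2}\tilde S(t) y\|^2\,dt \leq C'\|y\|^2$, and a companion Plancherel estimate applied either to the adjoint semigroup on $X^*$ or via resolvent duality in the Hilbert space $X$ recovers the desired bound on $\int_0^\infty \|\tilde S(t) y\|^2\,dt$. A subsidiary technical point is justifying the contour shift to $\text{Re}\,\lambda = 0$ in the Plancherel identity, which relies on analyticity of the resolvent across the imaginary axis (from the Neumann step) and on the $(1+s^2)^{-2}$ pointwise decay being uniform in $\omega$.
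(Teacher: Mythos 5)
The paper does not prove this statement---it is quoted verbatim from Engel--Nagel \cite{EngNagBre2000} as a black box---so your proposal can only be measured against the standard proofs. Your opening moves are sound: the Neumann-series enlargement of the resolvent set to a half-plane $\{\operatorname{Re}\lambda>-\sigma_0\}$ with bound $2M$, the rescaling $\tilde S(t)=e^{\sigma t}S(t)$, and the vector-valued Plancherel identity are exactly the right ingredients, and reducing (a) to (b) via the spectral radius is fine. (A small slip: with the telescoped resolvent identity you get $\Vert R(\omega+is)\,x\Vert=O(|s|^{-1})$, not $O(|s|^{-2})$, and near $s=0$ you must fall back on the uniform bound $2M\Vert x\Vert$ rather than the $1/|\lambda|$ terms; the integral in $s$ still converges, so this is harmless.) However, there are two genuine gaps. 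First, the step ``dominated convergence then lets $\omega\downarrow 0$'' is circular: the identity $\int_0^\infty e^{-2\omega t}\Vert\tilde S(t)x\Vert^2\,dt=\frac{1}{2\pi}\int_{\mathbb R}\Vert R(\omega+is)x\Vert^2\,ds$ is only known for $\omega>\omega_0(\tilde S)$, and $\omega_0(\tilde S)$ is precisely the quantity you are trying to bound. If $\omega_0(\tilde S)>0$, dominated convergence controls the right-hand side for all $\omega\geq 0$ but does not extend the \emph{validity} of the identity below $\omega_0(\tilde S)$; monotone convergence only takes you down to $\omega=\omega_0(\tilde S)$, yielding $\int_0^\infty e^{-2\omega_0(\tilde S)t}\Vert\tilde S(t)x\Vert^2\,dt<\infty$, which is strictly weaker than what you need. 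Note also that ``contour shifting'' cannot be applied to the Plancherel identity itself, since $\lambda\mapsto\Vert R(\lambda)x\Vert^2$ is not analytic and Cauchy's theorem does not apply to it.

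Second, and more seriously, the density upgrade at the end does not close. From $\int_0^\infty\Vert\tilde L^{-2}\tilde S(t)y\Vert^2\,dt\leq C'\Vert y\Vert^2$ one cannot recover $\int_0^\infty\Vert\tilde S(t)y\Vert^2\,dt\leq C''\Vert y\Vert^2$: the smoothing operator $\tilde L^{-2}$ has unbounded inverse and cannot be pulled out from inside the time integral, and appealing to ``a companion Plancherel estimate for the adjoint'' does not describe a mechanism that inverts it. The standard way both gaps are resolved simultaneously is to abandon Datko and work with the complex inversion formula: for $x\in D(L)$ and $w>\max(\omega_0,0)$ one has $tT(t)x=\frac{1}{2\pi i}\int_{\operatorname{Re}\lambda=w}e^{\lambda t}R(\lambda)^2x\,d\lambda$ (integration by parts using $R'=-R^2$), the contour is shifted to $\operatorname{Re}\lambda=0$ by Cauchy's theorem applied to the \emph{analytic integrand} $e^{\lambda t}R(\lambda)^2x$ (using the decay $\Vert R(\lambda)x\Vert=O(|{\operatorname{Im}\lambda}|^{-1})$ uniformly in the strip), and then one estimates $t\,|\langle T(t)x,y\rangle|=\frac{1}{2\pi}\bigl|\int_{\mathbb R}e^{ist}\langle R(is)x,R(is)^{\ast}y\rangle\,ds\bigr|\leq\frac{1}{2\pi}\Vert R(i\cdot)x\Vert_{L^2(ds)}\Vert R(i\cdot)^{\ast}y\Vert_{L^2(ds)}$. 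Each factor is bounded by a constant times $\Vert x\Vert$, resp.\ $\Vert y\Vert$, by transferring to $\operatorname{Re}\lambda=w$ via the first resolvent identity $R(is)=(I+wR(is))R(w+is)$ and applying Plancherel to the semigroup and to its (strongly continuous, since $X$ is Hilbert) adjoint semigroup. This yields $\Vert T(t)\Vert\leq C/t$, hence $\omega_0<0$, with no circular use of the Laplace representation on the axis and no density upgrade required. Your proposal contains the raw materials for this argument but not the argument itself.
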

	In the main text we proceed by checking  systematically   these three   conditions for the operator \eqref{eq:SL_TW_linearization}.
	
	\bibliographystyle{plain}
	\bibliography{refs}
	
\end{document}